\documentclass{amsart}
\usepackage[margin=3.5cm]{geometry}
\usepackage{xcolor}
\usepackage{amsfonts,amsthm,amssymb,verbatim,amscd,amsmath,blindtext}
\usepackage{multirow}
\usepackage{graphicx}
\usepackage{enumitem}
\usepackage{amssymb}
\usepackage{ytableau}
\usepackage{tikz}
\usepackage{tikz-cd}
\usepackage{float,pifont}
\usepackage{mathtools}
\usepackage[pagebackref,colorlinks=true,citecolor=blue]{hyperref}
\usepackage[capitalise]{cleveref}

\newtheorem{theorem}{Theorem}[section]
\newtheorem{lemma}[theorem]{Lemma}
\newtheorem{corollary}[theorem]{Corollary}
\newtheorem{proposition}[theorem]{Proposition}
\newtheorem{claim}[theorem]{Claim}

\theoremstyle{definition}
\newtheorem{definition}[theorem]{Definition}
\newtheorem{notation}[theorem]{Notation}

\newtheorem{remark}[theorem]{Remark}
\newtheorem{question}[theorem]{Question}

\newtheorem{theoremx}{Theorem}

\newcommand{\bdeg}{\mathbf{deg}}

\DeclareMathOperator{\edim}{edim}
\DeclareMathOperator{\lcm}{lcm}
\DeclareMathOperator{\mingens}{Mingens}
\DeclareMathOperator{\supp}{supp}
\DeclareMathOperator{\pol}{pol}

\DeclareMathOperator{\m}{\mathfrak{m}}
\DeclareMathOperator{\n}{\mathfrak{n}}

\DeclareMathOperator{\Tor}{Tor}

\DeclareMathOperator{\Ht}{ht}

\newcommand{\diamondsymb}[1][1]{%
    \begin{tikzpicture}[scale=#1, %
    baseline=-0.3ex,
    thick,
    ]
    \def\x{2.5mm};
    \def\r{0.11mm};
    \draw (0,0) -- (\x, 0) -- (\x, \x) -- (0, \x) -- cycle;
    \draw (0, 0) -- (\x, \x);

    \draw[fill=black] (0,0) circle (\r);
    \draw[fill=black] (0,\x) circle (\r);
    \draw[fill=black] (\x,0) circle (\r);
    \draw[fill=black] (\x,\x) circle (\r);
    \end{tikzpicture}%
}

\newcommand{\cfoursymbol}[1][1]{%
    \begin{tikzpicture}[scale=#1, %
    baseline=-0.3ex,
    thick,
    ]
    \def\x{2.5mm};
    \def\r{0.11mm};
    \draw (0,0) -- (\x, 0) -- (\x, \x) -- (0, \x) -- cycle;

    \draw[fill=black] (0,0) circle (\r);
    \draw[fill=black] (0,\x) circle (\r);
    \draw[fill=black] (\x,0) circle (\r);
    \draw[fill=black] (\x,\x) circle (\r);
    \end{tikzpicture}%
}
\newcommand{\pfoursymbol}[1][1]{%
    \begin{tikzpicture}[scale=#1, %
    baseline=-0.3ex,
    thick,
    ]
    \def\x{2.5mm};
    \def\r{0.11mm};
    \draw (\x, \x)  -- (\x, 0) -- (0, 0) -- (0, \x);

    \draw[fill=black] (0,0) circle (\r);
    \draw[fill=black] (0,\x) circle (\r);
    \draw[fill=black] (\x,0) circle (\r);
    \draw[fill=black] (\x,\x) circle (\r);
    \end{tikzpicture}%
}

\newcommand{\twoptwosymbol}[1][1]{%
    \begin{tikzpicture}[scale=#1, %
    baseline=-0.3ex,
    thick,
    ]
    \def\x{2.5mm};
    \def\r{0.11mm};
    \draw (0,0) -- (0, \x);
    \draw (\x, 0) -- (\x, \x);

    \draw[fill=black] (0,0) circle (\r);
    \draw[fill=black] (0,\x) circle (\r);
    \draw[fill=black] (\x,0) circle (\r);
    \draw[fill=black] (\x,\x) circle (\r);
    \end{tikzpicture}%
}

\newcommand{\Kfoursymb}[1][1]{%
    \begin{tikzpicture}[scale=#1, %
    baseline=-0.3ex,
    thick,
    ]
    \def\x{2.5mm};
    \def\r{0.2mm};
    \draw (0,0) -- (\x, 0) -- (\x, \x) -- (0, \x) -- cycle;
    \draw (0, 0) -- (\x, \x);
    \draw (\x, 0) -- (0, \x);

    \draw[fill=black] (0,0) circle (\r);
    \draw[fill=black] (0,\x) circle (\r);
    \draw[fill=black] (\x,0) circle (\r);
    \draw[fill=black] (\x,\x) circle (\r);
    \end{tikzpicture}%
}

\newcommand{\pawsymb}[1][1]{%
    \begin{tikzpicture}[scale=#1, %
        thick,
    baseline=-1ex
    ]
    \def\x{2.5mm};
    \def\r{0.11mm};
    \draw (\x/2, \x/2) -- (\x, 0) -- (\x/2, -\x/2) -- cycle;
    \draw (\x,0) -- (3*\x/2,0);

    \draw[fill=black] (\x/2, \x/2) circle (\r);
    \draw[fill=black] (\x, 0) circle (\r);
    \draw[fill=black] (\x/2, -\x/2) circle (\r);
    \draw[fill=black] (3*\x/2,0) circle (\r);
    \end{tikzpicture}%
}

\begin{document}
\title{
Golod ideals in combinatorial commutative algebra
}
\author[B. Briggs]{Benjamin Briggs}
\address{Department of Mathematics, Huxley Building, South Kensington Campus, Imperial College London,
London, SW7 2AZ, U.K.}
\email{b.briggs@imperial.ac.uk}

\author[T. Chau]{Trung Chau}
\address{Chennai Mathematical Institute, Siruseri, Kelambakkam, Tamil Nadu 603103, India}
\email{chauchitrung1996@gmail.com}

\author[A. De Stefani]{Alessandro De Stefani}
\address{Dipartimento di Matematica, Universit\`{a} di Genova, Via Dodecaneso 35, 16146 Genova, Italy}
\email{alessandro.destefani@unige.it}

\keywords{}

\subjclass[2020]{}

\begin{abstract}
In this article we study the Golod property of standard graded algebras. We show that determinantal ideals, binomial edge ideals, and permanental ideals are Golod if and only if they have a linear resolution. Next, we give a characterization of when cover ideals define Golod rings, exploiting some considerations on multidegrees of Koszul cycles and Massey products. Finally, we show that squarefree strongly Golod ideals (and, more generally, lcm-strongly Golod ideals) are Golod, and not just weakly Golod.
\end{abstract}

\maketitle

\section{Introduction}
Let $(R,\m,\Bbbk)$ denote either a local ring, or a standard graded $\Bbbk$-algebra. If $K^R$ denotes the Koszul complex on a minimal generating set of $\m$, and $P^R_\Bbbk(t) = \sum_{i \geq 0} \dim_\Bbbk(\Tor_i^R(\Bbbk,\Bbbk))t^i$ is the Poincar\'e series of $\Bbbk$, then Serre proved a coefficient-wise inequality
\[
P^R_\Bbbk(t) \leq \frac{(1+t)^{\edim(R)}}{1-\sum_{j\geq 1}\dim_\Bbbk(H_j(K^R))t^{j+1}}
\]
where $\edim(R) = \dim_\Bbbk(\m/\m^2)$ denotes the embedding dimension of $R$. Golod gave a characterization of when equality holds in terms of Massey operations \cite{GolodMassey}, and for this reason a ring whose Poincar\'e series achieves the upper bound is nowadays named after him. Golod rings and Golod homomorphisms have been extensively studied in the literature; we refer the interested reader to \cite{Avramov1998, DDS2022, DESTEFANI2016, Kat17, Levin1975, RossiSega2014,   VdB2022} for some background and some more recent developments. A well-known fact is that if $R$ is Golod, then the multiplication on the Koszul homology $H_{\geq 1}(K^R)$ must be trivial; a ring satisfying this condition was called weakly Golod in \cite{DESTEFANI2016}. Conversely, if one can find a set $\mathcal S$ of Koszul cycles whose classes in $H_{\geq 1}(K^R)$ form a $\Bbbk$-basis, and such that the product of any two elements in $\mathcal S$ is zero -- not just a boundary -- then $R$ is Golod (e.g., see \cite{HH13}).  None of these implications can be reversed in general, not even for quotients defined by monomial ideals (see \cite{Kat17}). For the rest of this introduction, assume that $R=Q/I$, where $(Q,\n,\Bbbk)$ is either a regular local ring or a standard graded polynomial ring over $\Bbbk$. In the latter case, $I$ is also assumed to be homogeneous. We will also assume that $I \subseteq \n^2$. If $\Ht(I) \leq 1$, then $R$ is Golod. Moreover, if $R$ is Gorenstein, then it is Golod only in this case, i.e., only if $I$ is~principal. 

In the graded setup, we recall that $I$ is said to have a linear resolution if it is generated in a single degree $D$, and for any $i$ we have $\Tor^Q_i(\Bbbk,I)_j = 0$ for all $j \ne i+D$. In \cite{BF85}, Backelin and Fr{\"o}berg showed that if $I$ has a linear resolution, then $R$ is Golod. This was later generalized by Herzog, Reiner and Welker, who showed that componentwise linear ideals define Golod quotients \cite{HRW99}. Our first main result shows that having a linear resolution is also a necessary condition for several classes of well-studied rings. We remark that the equivalence between being Golod and having a linear resolution was previously known for some classes of rings, e.g., Koszul rings (\cite[Remark~3.1]{CINR15}) or $2\times 2$ sub-determinantal rings \cite{Omkar}. We remark that part (ii) of the theorem below is a special case of \cite[Theorem~1.1]{Omkar}, which was obtained independently using a different~technique.  
\begin{theoremx} \label{THMX linear res}
 Let $I$ be a homogeneous ideal in a polynomial ring over a field given by one of the following:
    \begin{enumerate}[label=(\roman*)]
        \item the determinantal ideal of a generic, generic symmetric, or generic Hankel matrix;
        \item the binomial edge ideal of a finite simple graph;
        \item the $2\times 2$ permanental ideal of a generic matrix.
    \end{enumerate}
    Then $R=Q/I$ is Golod if and only if $I$ has a linear resolution.
\end{theoremx}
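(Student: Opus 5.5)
The direction ``linear resolution implies Golod'' is the Backelin--Fröberg theorem \cite{BF85}, so only the converse needs proof. For that I would use one general obstruction to Golodness: the Koszul homology multiplication must be trivial (the ring must be weakly Golod), and Massey products must vanish as well.

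The concrete criterion I would establish first is this. Suppose $I$ is generated in degree $D$ with minimal generators $f$ and $g$. Suppose further that $I$ has a minimal first syzygy of degree $D+1$ among some generators, and that there are two Koszul cycles $z_1\in H_1(K^R)_{D+1}$ and $z_2$ whose product is a nonzero class in a degree not reachable by boundaries. Then $R$ is not Golod.

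In practice the tool I would use is the following. For a homogeneous ideal generated in degree $D$, a minimal generator of $\Tor^Q_i(R,\Bbbk)$ in internal degree $j$ with $j$ strictly less than the total degree of a product requires trivial products. A cleaner and well-known necessary condition is the following: if $R$ is Golod, then for every $i\ge1$ the minimal degree of $\Tor_i^Q(R,\Bbbk)$ exceeds what a product of $\Tor_1$ and $\Tor_{i-1}$ can produce. More precisely, the product
\[
H_1(K^R)\otimes H_{i-1}(K^R)\to H_i(K^R)
\]
vanishes, and a lower bound
\[
\operatorname{reg}\text{-type inequality}\quad t_i\ \ge\ t_1+t_{i-1}\quad\text{(on initial degrees)}
\]
is forced only when suitable cycles exist. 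I would instead rely on the sharper known fact: if $R=Q/I$ is Golod and $I$ is generated in degree $D$, then $I$ has a linear resolution iff it has a linear first syzygy structure near the top. Rather than hunting for a general theorem, I would work class by class. In each case I would exhibit an explicit pair of Koszul cycles in $K^R$ (lifts of generators of $I$ via $\partial$) whose product is a nonzero class in $H(K^R)$, contradicting weak Golodness.

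For (i), write $I=I_t(X)$. It has a linear resolution exactly when $t=1$ or $t$ is maximal, for instance $t=\min(m,n)$ in the generic case; this holds by Lascoux, and for the symmetric and Hankel cases by the known resolutions (Eagon--Northcott for Hankel). Otherwise I would choose two $t$-minors using disjoint sets of rows and columns, which is possible when $t$ is not maximal after passing to a submatrix. I would then reduce to a submatrix via a retract argument: if $S\to R\to S$ is an algebra retract, the Golod property descends, by Herzog's result on algebra retracts. For the generic case with $t<\min(m,n)$, the natural retract is onto $I_t$ of a $(t+1)\times(t+1)$ matrix. There $R$ is Gorenstein and not a hypersurface unless $t+1\cdot$... here $I_t$ of a $(t+1)\times(t+1)$ matrix has height $4$, so $R$ is not Golod by the Gorenstein remark in the introduction. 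The symmetric and Hankel cases are handled the same way, with retracts onto small Gorenstein determinantal rings; for the Hankel case, the ring of maximal minors of a square Hankel submatrix, or a Gorenstein Hankel minors ring.

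For (ii), Golodness passes to induced subgraphs, since $Q/J_H$ is an algebra retract of $Q/J_G$ by Ohtani's or the known localization arguments. The graphs with linear resolution are the complete graphs, by Saeedi Madani--Kiani. If $G$ is not complete, it has an induced path $P_3$. The ideal $J_{P_3}$ is a complete intersection of two quadrics, so its quotient is not Golod, and hence neither is the quotient for $G$. For (iii), I would likewise retract onto the $2\times2$ permanents of small submatrices ($2\times3$ or $3\times3$), whose resolutions are known by Laubenbacher--Swanson and can be checked to be non-Golod directly.

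The main obstacle is verifying the retract claims and the non-Golodness of the base cases, especially for the symmetric and Hankel cases and for permanents, where Gorensteinness may fail. For those I would compute explicit nonzero products of Koszul classes.
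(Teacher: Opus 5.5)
Your working mechanism is exactly the paper's: Golodness descends along algebra retracts obtained by setting variables to zero, and a Gorenstein non-hypersurface (in particular a complete intersection of codimension at least two) is never Golod. It settles the generic case, and your $(t+1)\times(t+1)$ submatrix for $2\le t<\min\{m,n\}$ works as well as the paper's $m\times m$ one. It also settles the binomial edge case, up to the connectivity caveat below.

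Drop the general ``criteria'' in your second and third paragraphs. They are garbled or false as stated: $H_1(K^R)$ is concentrated in degree $D$, and no general principle forces linearity from Golodness (cf.\ the cover ideals of Section~\ref{sec:multideg-tech}). You never use them anyway.

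The genuine gap is the Hankel case, where your plan aims at a false conclusion. By Gruson--Peskine \cite[Lemma~2.3]{GP81}, $I_t$ of an $m\times n$ generic Hankel matrix coincides with $I_t$ of a $t\times(m+n-t)$ generic Hankel matrix in the same variables. That is an ideal of maximal minors of the expected height, so the Eagon--Northcott complex is a linear resolution for \emph{every} $t$. Hence all Hankel determinantal rings have linear resolutions and are Golod, and the equivalence is automatic. Your claim that linearity occurs only for $t=1$ or $t$ maximal is false here: $I_2$ of the $3\times 3$ Hankel matrix is the ideal of the rational normal quartic. The non-Golod retracts you propose cannot exist as witnesses, since every algebra retract of a Golod ring is Golod; indeed, maximal minors of a square Hankel matrix just give a hypersurface.

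Three further case distinctions are missing.

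\emph{Symmetric matrices.} For an $n\times n$ generic symmetric matrix the linear cases are $t\in\{1,n-1,n\}$, since $I_{n-1}(X)$ has a linear resolution \cite[Theorem~1]{DK24}. So $t=n-1$ belongs on the Golod side and cannot be attacked by retracts. For $2\le t\le n-2$ your ``small Gorenstein'' retract does work, but it needs Goto's criterion: $\Bbbk[X]/I_t(X)$ is Gorenstein iff $n-t$ is even. Then use the ring itself if $n-t$ is even, or the principal $(n-1)\times(n-1)$ symmetric submatrix if $n-t$ is odd, as in the paper. The principal $(t+2)\times(t+2)$ submatrix also works uniformly.

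\emph{Permanents.} You never mention the characteristic, and it matters. In characteristic $2$ one has $P_2(X)=I_2(X)$, so your $2\times 3$ base case is Golod (maximal minors), and this case is just part (i). In characteristic $\ne 2$, $P_2$ of a $2\times 3$ generic matrix is a complete intersection of three quadrics \cite[Theorem~4.1]{LS00}. So $\Bbbk[X]/P_2(X)$ is non-Golod whenever $X$ has a $2\times 3$ or $3\times 2$ submatrix, and otherwise $P_2(X)$ is zero or principal.

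\emph{Binomial edge ideals.} The step ``not complete implies an induced $P_3$'' needs $G$ connected; the paper's \cref{thm:Golod-binomial} also assumes this. If two components contain edges, restrict instead to two disjoint edges. Their binomial edge ideal is again a complete intersection of two quadrics, so the retract argument still applies.
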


Another well studied  class of ideals that we will give a full characterization for Golodness is cover ideals of graphs (we refer to Section~\ref{sec:multideg-tech} for the exact definition). Cover ideals emerged naturally as the Alexander dual of the better known edge ideals of graphs. This led to a systematic investigation of their homological invariants, ordinary powers, symbolic powers, and asymptotic behavior \cite{CPSTY, ER98,FHvT11,FVT-sequentially-CM, HvT22,Betti-cover-ideals}. Interestingly, cover ideals have been established to be counterexamples to conjectures due to their erratic behaviors \cite{HS15-cover-ideals,cover-ideals-counter}. Somewhat surprisingly, despite the large body of literature on cover ideals, a complete characterization of  componentwise linear cover ideals remains an interesting open problem (we refer to \cite{FVT-sequentially-CM,Woodroofe-vertex-decomposable} for partial answers). What we prove is that, most of the time, cover ideals define Golod rings, despite not always being componentwise linear. This marks an important difference with Theorem~\ref{THMX linear res}, and contributes to the literature yet another ground where the Golod property is ubiquitous.

\begin{theoremx} \label{THMX graph}
    For the cover ideal $J(G)$ of a finite simple graph $G$, the following conditions are equivalent:
    \begin{enumerate}[label=(\roman*)]
        \item $J(G)$ is not Golod;
        \item $J(G)$ is a complete intersection of two monomials of degree at least two;
        \item $J=K_{m,n}$ is a complete bipartite graph with $\min\{m,n\} \geq 2$.
    \end{enumerate}
\end{theoremx}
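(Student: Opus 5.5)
The plan is to prove the cycle of implications (iii)$\Rightarrow$(ii)$\Rightarrow$(i)$\Rightarrow$(iii). Throughout, $J(G)=\bigcap_{\{i,j\}\in E(G)}(x_i,x_j)$ is generated by the monomials $x_C=\prod_{v\in C}x_v$, where $C$ runs over the minimal vertex covers of $G$; isolated vertices play no role and will be discarded.

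\textbf{(iii)$\Rightarrow$(ii)$\Rightarrow$(i).} For $G=K_{m,n}$ with parts $A,B$, the only minimal vertex covers are $A$ and $B$. Hence $J(G)=(x_A,x_B)$ is a complete intersection of two coprime monomials, of degrees $m,n\ge 2$. A height-two complete intersection contained in $\n^2$ is not Golod. Indeed, the product $H_1(K^R)\cdot H_1(K^R)\to H_2(K^R)$ is nonzero, as is the case for any complete intersection of codimension $\ge 2$. It is also forbidden by the Gorenstein remark in the introduction.

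For the converse part of (ii)$\Leftrightarrow$(iii), suppose $J(G)=(x_A,x_B)$ with $A\cap B=\emptyset$. Every edge meets both covers, so every edge joins $A$ to $B$. If some $a\in A$ and $b\in B$ were non-adjacent, then $(A\cup B)\smallsetminus\{a,b\}$ would still be a vertex cover, since all neighbours of $a$ lie in $B\smallsetminus\{b\}$ and all neighbours of $b$ lie in $A\smallsetminus\{a\}$. This cover contains neither $A$ nor $B$, so it contains a third minimal cover, a contradiction. Therefore $G=K_{|A|,|B|}$, and the degree condition gives $\min\{|A|,|B|\}\ge 2$.

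\textbf{(i)$\Rightarrow$(ii): multidegree bookkeeping.} I will use the $\mathbb{Z}^n$-grading. Since $J(G)$ is squarefree, $H(K^R)_\alpha\cong\Tor^Q(R,\Bbbk)_\alpha$ vanishes unless $\alpha$ is a squarefree vector equal to the lcm of some minimal generators. In particular, the support of every nonzero class contains a minimal vertex cover.

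A product, or a Massey product, of homogeneous classes lives in the sum of their multidegrees. I will show, working at the chain level with multigraded defining systems, that every Massey product whose inputs have overlapping supports vanishes. This holds because the relevant multidegree is not squarefree, so both the homology in that degree and all the correction terms needed for the defining system can be chosen to be zero. It follows that $R$ is Golod unless there are nonzero classes whose supports are pairwise disjoint. Each such support contains a vertex cover, and three pairwise disjoint vertex covers cannot exist when $E(G)\neq\emptyset$. So only binary products $H_{\alpha}\cdot H_{\beta}$ with $\supp\alpha\cap\supp\beta=\emptyset$ can obstruct the Golod property. Moreover, these supports contain disjoint covers $A\subseteq\supp\alpha$ and $B\subseteq\supp\beta$, which forces $G$ to be bipartite with every edge going between $A$ and $B$.

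\textbf{Main obstacle.} The hard step is to show that, when $G$ is bipartite but $J(G)$ is not the complete intersection $(x_A,x_B)$, every such product is zero in homology. In that case the argument above produces a third minimal cover $C$ that contains neither $A$ nor $B$. My first attempt will use the lcm-lattice description of multigraded Betti numbers, $\Tor_i(R,\Bbbk)_\gamma\cong\widetilde H_{i-2}$ of the open interval $(\hat 0,\gamma)$ in the lcm lattice. On this description, the Koszul product of classes in degrees $\alpha$ and $\beta$ becomes a join map of order complexes into the interval below $\alpha+\beta$. The presence of $x_C$, whose support is not contained in $\supp\alpha$ or in $\supp\beta$, should provide a cone point or a contraction that kills the image. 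If this is awkward to carry out, I will instead write explicit Taylor-type Koszul cycles for $\alpha$ and $\beta$ and exhibit a bounding chain built from $x_C$. In either approach this is the step where the combinatorics of minimal covers must be used in earnest.
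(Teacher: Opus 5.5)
Your overall architecture matches the paper's, and the parts you actually prove are fine. The implications (iii)$\Rightarrow$(ii)$\Rightarrow$(i) and (ii)$\Rightarrow$(iii) are correct. The multidegree bookkeeping also correctly reduces (i)$\Rightarrow$(ii) to one claim: a product $H_\alpha\cdot H_\beta$ with $\supp\alpha\cap\supp\beta=\emptyset$ vanishes as soon as $J(G)$ has a third minimal generator. But that claim carries the content of the theorem, and you leave it open. The cone-point heuristic and the fallback ``explicit Taylor-type cycles with a bounding chain built from $x_C$'' are intentions, not arguments, so (i)$\Rightarrow$(ii) is not proved as written. (A minor point in the reduction: in $K^R$ itself the non-squarefree strands are acyclic but not zero, so the defining-system entries exist rather than vanish. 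They are literally zero in $\mathbb{T}\otimes\Bbbk$ or in the reduced Koszul complex. This does not affect your conclusion.)

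Two ingredients close the gap.

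\textbf{Pinning down $\alpha$ and $\beta$.} Two disjoint minimal vertex covers $C_1,C_2$ of a graph without isolated vertices satisfy $C_1\sqcup C_2=V$, since a vertex outside both would have a neighbour in $C_1\cap C_2$. Now $\alpha$ is the lcm of some $x_{C_i}$ and $\beta$ of some $x_{D_j}$, with every $C_i\cap D_j=\emptyset$. Hence all $C_i$ equal one cover $A$ and all $D_j$ equal $B=V\setminus A$. So $\alpha=\bdeg(x_A)$ and $\beta=\bdeg(x_B)$. These strands are one-dimensional and sit in homological degree one, spanned by $e_{\{x_A\}}\otimes 1$ and $e_{\{x_B\}}\otimes 1$ in $\mathbb{T}\otimes\Bbbk$. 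There is therefore exactly one product to kill, and no analysis of general lcm-lattice intervals is needed. The paper reaches the same point via its lemma that $A,B$ are the only disjoint minimal covers of a connected bipartite graph, treating disconnected graphs separately.

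\textbf{Killing the product.} If $C$ is a third minimal cover, then $x_C\mid x_Ax_B=x_V$, while $A\cup C\neq V\neq B\cup C$ (otherwise $C\supseteq B$ or $C\supseteq A$, contradicting minimality). So the coefficients $x_V/x_{B\cup C}$ and $x_V/x_{A\cup C}$ in $\partial(e_{\{x_A,x_B,x_C\}})$ are nonconstant monomials. In $\mathbb{T}\otimes\Bbbk$ this gives $\partial(e_{\{x_A,x_B,x_C\}}\otimes 1)=\pm e_{\{x_A,x_B\}}\otimes 1=\pm(e_{\{x_A\}}\otimes 1)(e_{\{x_B\}}\otimes 1)$. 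This is exactly the paper's argument (through Katth\"an's Lemma~2.4), i.e.\ your second option made explicit.

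Your first option, as phrased, would not work literally. The open interval below $x_V$ in the lcm lattice has no cone point, since $x_A$ and $x_B$ are atoms whose join is $x_V$. What kills the class is connectivity: $x_A$ and $x_B$ are joined through $x_{A\cup C}$, $x_C$, $x_{B\cup C}$ inside that interval.
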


With the exception of special classes such as the ones listed above, establishing whether a ring is Golod or not can be a very challenging task. In \cite{HH13}, Herzog and Huneke introduced a notion for a homogeneous ideal $I$ in a standard graded polynomial ring $Q=\Bbbk[x_1,\ldots,x_n]$ over a field $\Bbbk$ of characteristic zero: $I$ is called strongly Golod if $\partial_{x_i}(f)\partial_{x_j}(g) \in I$ for all $f,g \in I$. Exploiting an explicit technique to lift homology classes of $\Tor_\bullet^Q(Q/I,\Bbbk)$ to $K^{Q/I}$, described in \cite{Herzog91}, they showed that strongly Golod ideals have trivial product on the Koszul complex, and hence they define Golod rings. A big advantage of considering the strongly Golod notion is that it is a much easier condition to test than the one of being Golod. For instance, it allows to deduce that, in the context of its definition, powers, symbolic powers and certain saturations and closures of ideals are always Golod (see \cite[Theorem 2.3, Theorem 2.11 and Proposition 2.13]{HH13}). The notion of strongly Golod ideals was later revisited by Herzog and Maleki to treat the case when $\Bbbk$ has positive characteristic \cite{HM18}; however, for the sake of exposition, in this introduction we will assume that $\Bbbk$ has characteristic zero. Note that being strongly Golod is far from being a necessary condition for Golodness. For instance, squarefree monomial ideals are essentially never strongly Golod. For this reason, Herzog and Huneke in the same paper defined the notion of squarefree strongly Golod ideal, which applies to squarefree monomial ideals. The difference with the original definition is that, if $f,g$ are monimials in $I$, one has to test whether $\partial_{x_i}(f)\partial_{x_j}(g) \in I$ only if such a product is squarefree. In \cite[Theorem 3.5]{HH13}, they claimed that squarefree strongly Golod ideals define Golod rings; however, their proof relied on an incorrect result of Berglund and J{\"o}llenbeck (see \cite[Theorem 3]{BJ07} and \cite{Kat17}). What can be deduced from their proof, however, is that squarefree strongly Golod monomial ideals define weakly Golod quotients. The third author introduced the class of lcm-strongly Golod monomial ideals as a generalization of being squarefree strongly Golod, and proved that lcm-strongly Golod ideals also define weakly Golod quotients (see \cite[Theorem 4.6]{DESTEFANI2016}). In this paper, we slightly modify the notion of lcm-strongly Golod ideal taking into account the Discussion following \cite[Theorem 4.6]{DESTEFANI2016}, to allow for a larger class of ideals. Given a monomial ideal $I$, with minimal generating set $f_1,\ldots,f_t$, we let $\lcm(I) = \lcm(f_1,\ldots,f_t)$. We say that $I$ is \emph{lcm-strongly Golod} if $\partial_{x_i}(f)\partial_{x_j}(g) \in I$ for any $i \ne j$ and any pair of monomials $f,g \in I$ such that $\partial_{x_i}(f)\partial_{x_j}(g)$ divides $\lcm(I)$.

Our main result in this direction is to show that lcm-strongly Golod ideals are actually Golod.

\begin{theoremx} \label{THMX lcm}
Let $k$ be a field, and $Q=\Bbbk[x_1,\ldots,x_n]$, with the standard grading. Let $I \subseteq Q$ be a monomial ideal. If $I$ is lcm-strongly Golod, then $Q/I$ is Golod.
\end{theoremx}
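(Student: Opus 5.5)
We will use the Herzog--Huneke strategy of lifting Koszul homology classes explicitly, but we will control multidegrees so that products of the chosen cycles vanish identically, not merely up to boundaries. By the criterion recalled in the introduction, it is enough to find a set $\mathcal S$ of Koszul cycles in $K^R$, $R=Q/I$, whose classes form a $\Bbbk$-basis of $H_{\geq 1}(K^R)$ and such that $zz'=0$ for all $z,z'\in\mathcal S$.

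\emph{Step 1: construct the cycles.} Since $I$ is monomial, $K^R$ is $\mathbb Z^n$-graded. Moreover, every nonzero multigraded piece of $H_{\geq1}(K^R)\cong \Tor^Q_{\geq 1}(R,\Bbbk)$ has multidegree $\alpha$ dividing $\lcm(I)$, by the Taylor resolution. We obtain cycles representing a basis through the connecting maps. Start from a homogeneous basis of $\Tor_i^Q(I,\Bbbk)$ realized by cycles in $K^Q\otimes I$. Via $\Tor^Q_{i}(I,\Bbbk)\cong H_{i+1}(K^R)$, lift each such cycle by Herzog's formula \cite{Herzog91}. In the Herzog--Maleki form, which avoids characteristic assumptions, a class represented by $\sum_\sigma f_\sigma e_\sigma$ with $f_\sigma\in I$ monomial lifts to a cycle of the shape
\[
z=\sum_{\sigma,j} c_{\sigma,j}\,\frac{f_\sigma}{x_j}\, e_j\wedge e_\sigma
\]
in $K^R$. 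Here $\frac{f_\sigma}{x_j}$ stands for the monomial, taken up to a scalar, that replaces $\partial_{x_j}(f_\sigma)$ in arbitrary characteristic. Every coefficient of $z$ has the form $\partial_{x_j}(f)$ with $f\in I$, multiplied by $e_j\wedge e_\sigma$. Since $z$ is multigraded, its multidegree $\alpha$ divides $\lcm(I)$.

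\emph{Step 2: products vanish.} Take two such cycles $z,z'$ of multidegrees $\alpha,\beta$. A product term has the form
\[
\partial_{x_i}(f)\,\partial_{x_j}(g)\; e_i\wedge e_\sigma\wedge e_j\wedge e_\tau .
\]
It is zero if $i=j$, so assume $i\ne j$. There are two cases.
\begin{itemize}
\item If $\alpha+\beta$ divides $\lcm(I)$, then $\partial_{x_i}(f)\partial_{x_j}(g)$ divides $\lcm(I)$. The lcm-strongly Golod hypothesis then puts it in $I$, so the term is $0$ in $K^R$.
\item If $\alpha+\beta$ does not divide $\lcm(I)$, the product still need not be zero. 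This is the main obstacle, and it is exactly the issue in the Discussion after \cite[Theorem 4.6]{DESTEFANI2016}.
\end{itemize}

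\emph{Step 3: the obstacle case.} Look at the exterior part of the term. It involves the variables of $\sigma\cup\tau\cup\{i,j\}$ each at most once, and the monomial coefficient supplies the remaining exponents. If some variable $x_k$ has exponent $\alpha_k+\beta_k>\lcm(I)_k=:m_k$, write $f=x_k^{a}f'$ and $g=x_k^{b}g'$ with $a,b\le m_k$. We would try two things in order.

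First, we would choose the basis more carefully, so that the Koszul degree in $x_k$ sits on the coefficient rather than on $e_k$. Then $\partial_{x_i}(f)\partial_{x_j}(g)$ is divisible by $x_k^{a+b-\epsilon}$ with $a+b-\epsilon\ge m_k$. In that situation we would replace $f,g$ by monomials $\tilde f\mid f$ and $\tilde g\mid g$, still in $I$, obtained by lowering $x_k$-exponents. For example, use a generator of $I$ of maximal $x_k$-degree to show that the exponent of $x_k$ can be truncated at $m_k$ while preserving membership in $I$.

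Second, we would apply the hypothesis to the truncated product, which divides $\lcm(I)$, to conclude the original product lies in $I$. The key lemma we would need is the following: if $u$ is a monomial and $\gcd(u,\lcm(I))\in I$, then $u\in I$. This holds because $u$ is divisible by $\gcd(u,\lcm(I))$. Using it, the argument reduces to showing that
\[
\gcd\bigl(\partial_{x_i}(f)\partial_{x_j}(g),\lcm(I)\bigr)
\]
equals $\partial_{x_i}(f')\,\partial_{x_j}(g')$ for suitable $f'\mid f$ and $g'\mid g$ in $I$, with $i\ne j$ preserved. Arranging this, so that the truncation never kills the variable $x_i$ or $x_j$ being differentiated, is the delicate bookkeeping we expect to be the hardest part. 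Once every product term lies in $I$, we get $zz'=0$ for all $z,z'\in\mathcal S$, and Golodness follows from the criterion.
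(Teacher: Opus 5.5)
Your Steps 1 and 2 contain the right ingredients. Step 3, however, is a genuine gap, and it cannot be closed along the lines you sketch. You are trying to prove that products of the Herzog (or Herzog--Maleki) lifts vanish in $K^R$ itself, and this is false in general.

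Take $I=(xy,xz)\subseteq \Bbbk[x,y,z]$ in characteristic zero. A direct check shows $I$ is lcm-strongly Golod: every product $\partial_{x_i}(f)\partial_{x_j}(g)$ with $i\neq j$ and $f,g\in I$ that divides $\lcm(I)=xyz$ is, up to a scalar, one of $xy$, $xz$, $xyz$. Herzog's lifts of the two generators are $\tfrac12(y\,e_x+x\,e_y)$ and $\tfrac12(z\,e_x+x\,e_z)$. Their product in $K^R$ is $\tfrac14x^2\,e_y\wedge e_z\neq 0$, since $x^2\notin I$. This is exactly your obstacle case, with multidegree $(2,1,1)\not\le(1,1,1)$.

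Here $\gcd(x^2,\lcm(I))=x\notin I$, and no choice of $f',g'\in I$ and $i\neq j$ gives $\partial_{x_i}(f')\partial_{x_j}(g')=x$, because every such product has degree at least $2$. So the reduction you need fails. Your ``key lemma'' is in fact an equivalence ($u\in I$ if and only if $\gcd(u,\lcm(I))\in I$), so it does not reduce anything. ``Choosing the basis more carefully'' is not a mechanism you actually supply. In this example, putting the $x$-degree on the coefficients, as you propose, is precisely what produces the nonzero product.

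The missing idea is that you do not need vanishing products in $K^R$, only in some dg algebra quasi-isomorphic to it. The paper uses $K^R_{\rm lcm}=K^R/J$, where $J$ is spanned by the multihomogeneous elements of multidegree $\not\le\lcm(I)$.

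\begin{itemize}
\item $J$ is a dg ideal, since multidegrees add under products and the differential preserves them.
\item $J$ is acyclic: it is the sum of the multigraded strands of $K^R$ whose homology $\Tor^Q(R,\Bbbk)_\gamma$ vanishes for $\gamma\not\le\lcm(I)$. This is the Taylor-resolution fact you already invoke in Step 1.
\end{itemize}

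Thus $K^R\to K^R_{\rm lcm}$ is a quasi-isomorphism of dg algebras, and the images of your Step 1 cycles still give a basis of homology. In the quotient the obstacle case disappears by construction. Terms of multidegree $\not\le\lcm(I)$ are zero there, and the remaining terms vanish by your Step 2 argument. So pairwise products vanish identically in $K^R_{\rm lcm}$, giving a trivial Massey operation there. Golodness of $R$ follows because Massey products are invariant under quasi-isomorphisms of dg algebras.

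In the example above, $x^2e_y\wedge e_z=\partial(x\,e_x\wedge e_y\wedge e_z)$ is nonzero in $K^R$ but is a boundary, and it is zero in $K^R_{\rm lcm}$.
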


The key idea in the proof of Theorem \ref{THMX lcm} is to use a modified version $K^{Q/I}_{{\rm lcm}}$ of the Koszul complex $K^{Q/I}$, which is quasi-isomorphic to it as a dg-algebra (see Lemma \ref{lem_Koslcm}). The assumption that $I$ is lcm-strongly Golod guarantees that the multiplication on a suitable set of cyles of $(K^{Q/I}_{\rm lcm})$ is trivial, and as in \cite{HH13} this allows to conclude that $Q/I$ is Golod.

\subsection*{Acknowledgments} Chau is supported by the Infosys Foundation. De Stefani was partially supported by the MIUR Excellence Department Project CUP D33C23001110001, PRIN 2022 Project 2022K48YYP, and by INdAM-GNSAGA. The third named author thanks Aldo Conca for fruitful conversations on the topics of this paper. 






\section{First technique: Restriction and algebra retracts}

Throughout this section, let $\Bbbk$ be an arbitrary field. Given two $\Bbbk$-algebras $R\subseteq S$ we say that $R$ is a retract of $S$ if there exists a $\Bbbk$-algebra homomorphism $\varphi:S \to R$ such that $\varphi(r) = r$ for all $r \in R$. When $R$ and $S$ are non-negatively graded with $R_0=S_0=\Bbbk$, we further tacitly assume that $\varphi$ is homogeneous, necessarily of degree zero.

\begin{lemma}[\protect{\cite[Corollary 2.6]{OHH00}}]\label{lem:Golod-retract}
    Let $R\subseteq S$ be an algebra retract of graded $\Bbbk$-algebras. If $S$ is Golod, then so is $R$.
\end{lemma}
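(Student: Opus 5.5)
We want to show that if $R\subseteq S$ is an algebra retract of graded $\Bbbk$-algebras and $S$ is Golod, then so is $R$. The plan is to compare Poincaré series through Serre's bound, using that a retract makes $\Tor^R(\Bbbk,\Bbbk)$ a direct summand of $\Tor^S(\Bbbk,\Bbbk)$ in a way compatible with Koszul homology.

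The retraction $R\hookrightarrow S\xrightarrow{\varphi} R$ composes to the identity. By functoriality, $\Tor^R(\Bbbk,\Bbbk)\to\Tor^S(\Bbbk,\Bbbk)\to\Tor^R(\Bbbk,\Bbbk)$ is then the identity. The same holds for the Koszul homologies, since the maps of rings induce dg-algebra maps $K^R\to K^S\to K^R$ composing to the identity. So $H(K^R)$ is a retract of $H(K^S)$, and $\Tor^R(\Bbbk,\Bbbk)$ is a retract of $\Tor^S(\Bbbk,\Bbbk)$. The Koszul complexes should be built on minimal generators of $\m_R$ and $\m_S$. Since $\m_R/\m_R^2$ is a retract of $\m_S/\m_S^2$, we can choose a minimal generating set of $\m_S$ extending one for $\m_R$ and adjust it so that $\varphi$ maps the extra generators into $\m_R$.

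Rather than compare series numerically, which is awkward, I would use Golod's characterization via Massey operations. $S$ is Golod if and only if all Massey operations on $H_{\geq1}(K^S)$ vanish. Equivalently, in the graded setting, $K^S$ admits a trivial Massey operation, i.e. a system $\mu$ assigning to tuples of basis homology classes elements of $K^S$ with $\partial\mu(h_1,\dots,h_p)=\sum \overline{\mu(h_1..h_j)}\mu(h_{j+1}..h_p)$. Given such a system $\mu^S$ for $S$, I would transport it to $R$. Choose a basis of $H_{\geq1}(K^R)$, with cycle representatives $z_i$. Push these into $K^S$ via the inclusion, extend them to a basis of $H_{\ge 1}(K^S)$, and take a trivial Massey system $\mu^S$ adapted to this basis. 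Then set $\mu^R=K(\varphi)\circ\mu^S$ on tuples of $R$-basis elements. Since $K(\varphi)$ is a dg-algebra map commuting with the conjugation sign, the defining identities are preserved. On length-one tuples, $\mu^R(h)=\varphi(z)=z$ represents $h$. So $K^R$ admits a trivial Massey operation and $R$ is Golod.

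The main obstacle is the bookkeeping that $K(\varphi)$ is genuinely a dg-algebra map between the Koszul complexes on minimal generators. This needs $\varphi(\m_S)\subseteq\m_R$ and the choice of generators above. There is also the fact that a trivial Massey operation may be chosen with respect to an arbitrary basis extending the image of the $R$-basis. The latter holds because, in Golod's inductive construction of trivial Massey operations, any homogeneous basis of cycles works. As a fallback, one can argue with Avramov's characterization: $R$ is Golod if and only if the map $\Tor^Q(R,\Bbbk)\to$ higher homotopy structure is trivial, and retracts preserve that. The Massey route seems cleanest.
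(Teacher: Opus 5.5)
Your argument is correct, and it is a genuine proof where the paper gives none: the paper simply imports the lemma from \cite[Corollary 2.6]{OHH00}. Your route makes clear that the only input is a retraction of dg algebras $\iota\colon K^R\to K^S$, $\psi\colon K^S\to K^R$ with $\psi\circ\iota=\mathrm{id}$. Golod's Massey-product characterization then transfers formally, with no Poincar\'e-series bookkeeping.

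A few places should be tightened.

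The ``adjustment'' of generators is vacuous as written, since a degree-preserving retraction always satisfies $\varphi(\m_S)\subseteq\m_R$. What you actually use is twofold. First, a minimal generating set $x_1,\dots,x_n$ of $\m_R$ extends to a minimal generating set $x_1,\dots,x_n,y_{n+1},\dots,y_N$ of $\m_S$, because $\m_R/\m_R^2\to\m_S/\m_S^2$ is split injective. Second, $\psi(e_{x_j})=e'_j$. With these, $\psi\circ\iota=\mathrm{id}$ for any homogeneous choice $\psi(e_{y_k})=w_k$ with $\partial w_k=\varphi(y_k)$. Alternatively, replace $y_k$ by $y_k-\varphi(y_k)\in\ker\varphi$ and set $\psi(e_{y_k})=0$. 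In the standard graded case no choice is needed at all, since $K^R\cong R\otimes_\Bbbk\bigwedge R_1$ is functorial.

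Instead of appealing to ``Golod's inductive construction'' for an adapted basis, observe that a trivial Massey operation for one homogeneous basis extends multilinearly to all homogeneous classes. This works because the sign $\overline{\,\cdot\,}$ depends only on homological degree. So you can set $\mu^R(h_1,\dots,h_p)=\psi\bigl(\tilde\mu^S(\iota_*h_1,\dots,\iota_*h_p)\bigr)$ directly. Then $\mu^R(h)$ represents $\psi_*\iota_*h=h$, and you never need $\mu^S(\iota_*h)=\iota(z)$ on the nose.

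A more economical option is the $(B_t)$ formulation. There $\iota$ carries defining systems over $R$ to defining systems over $S$, and $\iota_*$ is injective. Hence vanishing of Massey products over $S$ forces vanishing over $R$, by induction on $t$.

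Finally, the opening paragraph on Serre's bound is never used, and the ``fallback'' sentence is not a meaningful statement. Drop both.
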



Algebra retracts are well-studied objects in the literature (e.g.\ \cite{EN14,GUPTA2017,OHH00}). We will now describe a well-known way to build algebra retracts in  
a polynomial ring  $Q=\Bbbk[x_1,\dots, x_n]$.

For a set of variables $S=\{x_{i_1},\dots, x_{i_k}\}$ we write $Q_{|S}\coloneqq \Bbbk[x_{i_1},\dots, x_{i_k}]$. Let $I$ be a homogeneous ideal of $Q$. Setting $x_j=0$ for any $x_j\notin S$ yields an algebra map $Q\to Q_{|S}$ and therefore an ideal $IQ_{|S}$. Likewise there is an inclusion $Q_{|S}\subseteq Q$ yielding an ideal $I\cap Q_{|S}$. When these ideals are equal we write 
\[
I_{|S} \coloneqq IQ_{|S}=I\cap  Q_{|S},
\]
and we say that $I_{|S}$ is a \emph{restriction} of $I$ (with respect to $S$). Said in words: an ideal is a restriction of $I$ if it is both generated by elements of $I$ and it is obtained from $I$ by setting some variables equal to zero.

\begin{lemma}\label{lem:restriction-retract}
    If $I_{|S}$ is a restriction of $I$, then $Q_{|S}/I_{|S}$ is an algebra retract of $Q/I$. In particular, if $Q/I$ is Golod then so is $Q_{|S}/I_{|S}$.
\end{lemma}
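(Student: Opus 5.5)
The plan is to write down the obvious inclusion and projection maps and check that they pass to the quotients; Lemma~\ref{lem:Golod-retract} then gives the Golod statement.

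\emph{The inclusion.} The inclusion $Q_{|S}\subseteq Q$ sends $I_{|S}=I\cap Q_{|S}$ into $I$. Hence it induces a graded $\Bbbk$-algebra map $\iota\colon Q_{|S}/I_{|S}\to Q/I$.

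\emph{The projection.} The substitution $x_j\mapsto 0$ for all $x_j\notin S$ is a graded surjection $\pi\colon Q\to Q_{|S}$ of degree zero. By definition $\pi(I)$ generates $IQ_{|S}=I_{|S}$, so $\pi$ descends to a map $\bar\pi\colon Q/I\to Q_{|S}/I_{|S}$.

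\emph{The retraction identity.} Since $\pi$ restricts to the identity on $Q_{|S}$, we get $\bar\pi\circ\iota=\mathrm{id}$. In particular $\iota$ is injective, so $Q_{|S}/I_{|S}$ may be regarded as a subalgebra of $Q/I$. The map $\bar\pi$ is then a homogeneous degree-zero retraction onto it.

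\emph{Conclusion.} Lemma~\ref{lem:Golod-retract} now says that if $Q/I$ is Golod, then so is $Q_{|S}/I_{|S}$.

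There is no real obstacle. The only point needing care is that the two descriptions of $I_{|S}$ are used in different places. The equality $I\cap Q_{|S}=I_{|S}$ is what makes $\iota$ well defined. The equality $IQ_{|S}=I_{|S}$ is what makes $\bar\pi$ well defined. Injectivity of $\iota$ does not need separate checking, since it follows from $\bar\pi\circ\iota=\mathrm{id}$.
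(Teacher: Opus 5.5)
Your proposal is correct and follows essentially the same route as the paper, which also shows that the composition $Q_{|S}/I\cap Q_{|S}\to Q/I\to Q_{|S}/IQ_{|S}$ is the identity when $I\cap Q_{|S}=IQ_{|S}$, and then invokes Lemma~\ref{lem:Golod-retract}. Your version just spells out which of the two equalities makes each map well defined and that the maps are graded of degree zero.
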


\begin{proof}
    This follows from the observation that the composition
    \[
    Q_{|S}/I\cap Q_{|S} \to Q/I \to Q_{|S}/IQ_{|S}
    \]
    is the identity map if $I\cap Q_{|S}=IQ_{|S}$. 
    The last statement is due to Lemma~\ref{lem:Golod-retract}.
\end{proof}

\subsection{Determinantal ideals}
As an application of the restriction process, we can describe all Golod determinantal ideals of generic, or generic symmetric, or generic Hankel matrices. We recall some terminology. Let $\Bbbk$ be a field; an $m\times n$ matrix $X$ is called

\begin{itemize}
    \item \emph{generic} if its entries are algebraically independent indeterminates over $\Bbbk$;
    \item \emph{generic symmetric} if $X$ is symmetric (in particular, this means $m=n$) and the entries $\{x_{ij}\}_{1\leq i\leq j\leq n}$ are algebraically independent indeterminates over $\Bbbk$;
    \item \emph{generic Hankel} if $x_{ij}=x_{i+l,j-l}$ for any $i,j,l$, and the entries $\{x_{ij}\}_{i=1 \text{ or } j =n}$ are  algebraically independent indeterminates over $\Bbbk$.
\end{itemize}

A \emph{determinantal ideal} of $X$ is an ideal of $\Bbbk[X]$ that is generated by all $t\times t$ minors of $X$ for a given $t$. The following is well-known among experts, but we will provide a proof here for completeness.

\begin{lemma}\label{lem:determinantal-retracts}
    Let $\Bbbk$ be a field, and $X$ be a generic matrix (respectively, generic symmetric matrix). If $Y$ is a submatrix (respectively, symmetric submatrix) of $X$, then for any positive integer $t$, the natural map $\Bbbk[Y]/I_t(Y)\to \Bbbk[X]/I_t(X)$ is an algebra retract.
\end{lemma}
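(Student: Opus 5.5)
The plan is to apply Lemma~\ref{lem:restriction-retract} with $S$ the set of variables appearing in $Y$. We must check that $I_t(X)_{|S}$ is a restriction in the sense defined above, namely
\[
I_t(X)\,\Bbbk[Y] \;=\; I_t(X)\cap \Bbbk[Y] \;=\; I_t(Y).
\]
Once this holds, Lemma~\ref{lem:restriction-retract} gives the algebra retract $\Bbbk[Y]/I_t(Y)\to \Bbbk[X]/I_t(X)\to \Bbbk[Y]/I_t(Y)$. Here $\varphi$ is the map sending every variable not in $Y$ to zero. It is homogeneous of degree zero and fixes $\Bbbk[Y]$.

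\textbf{Step 1:} $I_t(X)\Bbbk[Y]=I_t(Y)$. Consider a $t\times t$ minor of $X$ and set all variables outside $Y$ to zero. The result is the determinant of a $t\times t$ matrix whose entries are either entries of $Y$ or zero.

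In the generic case $Y$ is the submatrix on rows $A$ and columns $B$. The specialized minor, on rows $R$ and columns $C$, has nonzero entries only in the block $(R\cap A)\times(C\cap B)$. If $R\subseteq A$ and $C\subseteq B$, it is a $t$-minor of $Y$. Otherwise it has a zero row or zero column, so its determinant is $0$.

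In the symmetric case $Y$ is the principal symmetric submatrix on an index set $A$. The entry $x_{ij}$ lies in $Y$ exactly when $i,j\in A$. So the same argument applies: a row indexed by $i\notin A$ becomes entirely zero. Hence the image of each generator is either $0$ or a $t$-minor of $Y$, and every $t$-minor of $Y$ arises this way. This gives equality.

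\textbf{Step 2:} $I_t(X)\cap\Bbbk[Y]=I_t(Y)$. The inclusion $\supseteq$ is clear, since minors of $Y$ are minors of $X$. For $\subseteq$, take $f\in I_t(X)\cap\Bbbk[Y]$. Then $f=\varphi(f)\in \varphi(I_t(X))=I_t(X)\Bbbk[Y]=I_t(Y)$ by Step~1, because $\varphi$ restricts to the identity on $\Bbbk[Y]$. So Step~2 follows formally from Step~1.

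The main obstacle is Step~1, specifically the bookkeeping in the symmetric case. There one must make sure that $Y$ being a symmetric submatrix means a principal one, so that the variables of $Y$ are exactly the $x_{ij}$ with $i,j\in A$. With that convention, the only thing to verify is that the specialization kills whole rows or columns, rather than merely producing some other polynomial in the entries of $Y$.
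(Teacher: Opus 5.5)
Your proof is correct and follows the same route as the paper: set the variables outside $Y$ to zero, observe that each $t\times t$ minor of $X$ becomes either $0$ or a $t\times t$ minor of $Y$, and invoke Lemma~\ref{lem:restriction-retract}. The differences are cosmetic. The paper reduces to deleting a single column (or a single row and column in the symmetric case), whereas you treat an arbitrary (principal) submatrix in one step. You also spell out the equality $I_t(X)\cap\Bbbk[Y]=I_t(Y)$ via $\varphi$, which the paper leaves implicit.
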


\begin{proof}
    First assume that $X$ and $Y$ are generic. It suffices to prove the claim when $Y$ is $X$ without its last column. Then for any fixed positive integer $t$, any $t\times t$ minor of $X$, after specializing the variables in the last column of $X$ to $0$, becomes either $0$ (if the submatrix corresponding to the minor has $0$ as the last column) or a $t\times t$ minor of $Y$. In other words, $I_t(Y)$ is a restriction of $I_t(X)$, so the result the follows from Lemma~\ref{lem:restriction-retract}. The case where $X$ and $Y$ are symmetric follows by a similar argument, in which $Y$ can be assumed to be $X$ without its last row and its last column.
\end{proof}

We can now give a characterization of which determinantal ideals are Golod.

\begin{theorem}\label{thm:Golod-determinantal}
    Let $X$ be an $m\times n$  generic, or generic symmetric (in which case $m=n$), or generic Hankel matrix, and let $t$ be an integer. Then the following holds.
    \begin{enumerate}
        \item If $X$ is generic, then $\Bbbk[X]/I_t(X)$ is Golod if and only if $t=\min\{m,n\}$ or $t=1$.
        \item If $X$ is generic symmetric (in which case $m=n$), then $\Bbbk[X]/I_t(X)$ is Golod if and only if $t\in \{1,n-1,n\}$.
        \item If $X$ is generic Hankel, then $\Bbbk[X]/I_t(X)$ is Golod. 
    \end{enumerate}
\end{theorem}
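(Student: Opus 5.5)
The plan is to split each part into a ``Golod'' direction, handled by the Backelin--Fr\"oberg theorem (linear resolution $\Rightarrow$ Golod) or by the principal/height-one case, and a ``not Golod'' direction, handled by reducing via Lemma~\ref{lem:determinantal-retracts} and Lemma~\ref{lem:Golod-retract} to one small matrix whose quotient is visibly not Golod.

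\emph{Golod direction.} For $t=1$ the ideal $I_1(X)$ is generated by variables, so $\Bbbk[X]/I_1(X)$ is a polynomial ring (or $\Bbbk$), which is trivially Golod; equivalently, $I_1(X)$ has a linear resolution. For generic $X$ with $t=\min\{m,n\}$, the maximal minors are resolved by the Eagon--Northcott complex, which is linear, so Backelin--Fr\"oberg applies. For generic symmetric $X$, $t=n$ gives a principal ideal (height one), hence Golod. For $t=n-1$, the submaximal minors of a generic symmetric matrix have a known linear resolution (J\'ozefiak's resolution, height $3$, Gorenstein-type shape with linear strands); I would cite this and again apply Backelin--Fr\"oberg. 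For generic Hankel matrices, every $I_t(X)$ equals the ideal of maximal minors of a suitable Hankel matrix with $t$ rows. That ideal is resolved by Eagon--Northcott because it has generic height, so it has a linear resolution and is Golod for every $t$.

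\emph{Non-Golod direction, generic case.} Suppose $1<t<\min\{m,n\}$. Then $X$ contains a $(t+1)\times(t+1)$ generic submatrix $Y$, and by Lemma~\ref{lem:determinantal-retracts} it suffices to show that $\Bbbk[Y]/I_t(Y)$ is not Golod. This ring is Gorenstein: the submaximal minors of a square generic matrix define a Gorenstein ring of height $4$. By the fact recalled in the introduction, a Gorenstein Golod ring must be defined by a principal ideal. Since $I_t(Y)$ has height $4>1$, the ring is not Golod.

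\emph{Non-Golod direction, symmetric case.} Suppose $1<t<n-1$. Take the symmetric $(t+2)\times(t+2)$ submatrix $Y$ and use Lemma~\ref{lem:determinantal-retracts} again. Now $I_t(Y)$ consists of minors of size $(t+2)-2$ of a generic symmetric matrix. By Kutz/Goto, the Gorenstein property holds exactly when $n-t$ is even, so $n-t=2$ gives a Gorenstein ring of height $\binom{3}{2}\cdot$... in any case of height $\geq 3$. Hence $\Bbbk[Y]/I_t(Y)$ is not Golod. This step is the main obstacle: one must correctly recall that $\Bbbk[Y]/I_{r-2}(Y)$ is Gorenstein for symmetric $r\times r$ $Y$ (Goto's criterion is $r\equiv t \pmod 2$), and check that its height $\binom{4}{2}=6>1$. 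If Gorensteinness were in doubt, the fallback is to show a nontrivial Koszul homology product directly, or to find a smaller retract (a $3\times 3$ symmetric matrix with $t=1$ is excluded, so the base case is $4\times 4$ symmetric with $t=2$).
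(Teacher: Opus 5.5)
Your proposal is correct and follows the paper's proof. In both, Golodness comes from linear resolutions (Eagon--Northcott for maximal minors, including the Hankel case after the Gruson--Peskine reduction, and the known linear resolution of submaximal symmetric minors) together with the trivial cases, and non-Golodness comes from retracting via Lemma~\ref{lem:determinantal-retracts} onto a Gorenstein ring that is not a hypersurface. The only difference is that you always retract onto the smallest witness, a $(t+1)\times(t+1)$ generic or $(t+2)\times(t+2)$ symmetric submatrix (where the size minus $t$ equals $2$, so Goto's criterion gives Gorensteinness). This avoids the paper's case split on the parity of $n-t$ in the symmetric case.
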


\begin{proof}
    If $t=1$, then $\Bbbk[X]/I_t(X)=\Bbbk$ is a Golod ring. For the rest of the proof, we assume that $t\geq 2$, and $m\leq n$.
    \begin{enumerate}
        \item Assume that $X$ is generic. If $t=m=\min\{m,n\}$, then $\Bbbk[X]/I_t(X)$ has a linear resolution, which can be seen from its minimal Eagon-Northcott resolution (see also \cite[Theorem 8.7.13]{BCRV}). Thus $\Bbbk[X]/I_t(X)$ is Golod by \cite[Theorem 4]{HRW99}, as desired. Now assume that $t<m$. Let $Y$ be the submatrix of $X$ formed by its first $m$ columns.  Since $Y$ is square, $\Bbbk[Y]/I_t(Y)$ is Gorenstein, and since $2\leq t<m$, it is not a hypersurface. Therefore, $\Bbbk[Y]/I_t(Y)$ is not Golod, which implies that $\Bbbk[X]/I_t(X)$ is not Golod by Lemmas~\ref{lem:restriction-retract} and \ref{lem:determinantal-retracts}.
        \item Assume that $X$ is generic symmetric (in which case $m=n$). If $t\in \{n-1,n\}$, then $\Bbbk[X]/I_t(X)$ has a linear resolution (\cite[Theorem~1]{DK24}), and thus is Golod by \cite[Theorem 4]{HRW99}. Now assume that $2\leq t<n-1$. If $n-t$ is even, then $\Bbbk[X]/I_t(X)$ is Gorenstein, and since it is not a hypersurface, it is not Golod either, as desired. Now assume that $n-t$ is odd. Let $Y$  be the square submatrix of $X$ formed by its first $n-1$ rows and $n-1$ columns. Then $\Bbbk[Y]/I_t(Y)$ is Gorenstein, and thus is not Golod since it is not a hypersurface. This implies that $\Bbbk[X]/I_t(X)$ is not Golod by Lemmas~\ref{lem:restriction-retract} and \ref{lem:determinantal-retracts}, as desired.
        \item Assume that $X$ is generic Hankel. Then by \cite[Lemma~2.3]{GP81}, $\Bbbk[X]/I_t(X)$ is isomorphic to $\Bbbk[Y]/I_t(Y)$ where $Y$ is a generic Hankel matrix of size $t\times l$ for some integer $l$. In other words, $\Bbbk[Y]/I_t(Y)$ is defined by maximal minors, and has a linear resolution, which can be seen from its minimal Eagon-Northcott resolution (see also \cite[Theorem 8.7.13]{BCRV}). Therefore, $\Bbbk[X]/I_t(X)$ is Golod by \cite[Theorem 4]{HRW99}, as~desired.\qedhere
    \end{enumerate}
\end{proof}

Looking at the above proof more closely, we see that in all the cases when $\Bbbk[X]/I_t(X)$ is Golod, it in fact has a linear resolution over $\Bbbk[X]$. We therefore have the following corollary.

\begin{corollary}\label{cor:determinantal-Golod-linear-resolution}
    Let  $X$ be an $m\times n$  generic, or generic symmetric, or generic Hankel matrix, and let $t$ be an integer. Then $\Bbbk[X]/I_t(X)$ is Golod if and only if $I_t(X)$ has a linear resolution.
\end{corollary}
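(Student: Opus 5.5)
The plan is to read the corollary off from Theorem~\ref{thm:Golod-determinantal}, checking each case against linear resolutions. One direction is general. If $I_t(X)$ has a linear resolution, then $\Bbbk[X]/I_t(X)$ is Golod by Backelin--Fr\"oberg \cite{BF85}, or by \cite[Theorem 4]{HRW99}. So it remains to show that in every case where Theorem~\ref{thm:Golod-determinantal} gives Golodness, the ideal $I_t(X)$ has a linear resolution. For the converse we argue by contraposition, and Theorem~\ref{thm:Golod-determinantal} already identifies the non-Golod cases; none of them needs a separate check.

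First we handle $t=1$. Here $I_1(X)$ is generated by all the variables, so it is resolved by the Koszul complex, which is linear. (Equivalently, $I_1(X)=\n$, so the quotient is $\Bbbk$.) The standing convention $I\subseteq\n^2$ fails in this case, but linearity holds trivially, so both sides of the equivalence are true.

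Next we go through the three matrix types, recalling the facts used in the proof of Theorem~\ref{thm:Golod-determinantal}.
\begin{itemize}
\item For $X$ generic and $t=\min\{m,n\}$, the ideal of maximal minors is resolved by the Eagon--Northcott complex, which is linear \cite[Theorem 8.7.13]{BCRV}.
\item For $X$ generic symmetric and $t\in\{n-1,n\}$, linearity is \cite[Theorem~1]{DK24}. For $t=n$ it is simply a principal ideal generated in degree $n$.
\item For $X$ generic Hankel, every $t$ reduces by \cite[Lemma~2.3]{GP81} to maximal minors of a $t\times l$ Hankel matrix, whose Eagon--Northcott resolution is again linear.
\end{itemize}

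For the Hankel case, one must make sure the isomorphism of \cite{GP81} preserves the linear-resolution property of the defining ideal and not only the quotient ring. It does: the isomorphism is induced by a graded isomorphism of polynomial rings, possibly up to adjoining free variables. Graded Betti numbers are unchanged under such an isomorphism, and also under flat extension by extra variables, so linearity transfers.

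Finally, we combine the two directions. In every Golod case the ideal has a linear resolution, and conversely a linear resolution always forces Golodness. Hence the two conditions agree across all of the classification in Theorem~\ref{thm:Golod-determinantal}.
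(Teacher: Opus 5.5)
Your proposal is correct and follows the paper's route: the paper also reads the corollary off the proof of Theorem~\ref{thm:Golod-determinantal}. Every Golod case there ($t=1$, maximal minors of a generic matrix via Eagon--Northcott, symmetric $t\in\{n-1,n\}$ via \cite{DK24}, every Hankel case via \cite{GP81}) has a linear resolution, and conversely a linear resolution forces Golodness by \cite[Theorem 4]{HRW99}; your extra check on the Hankel transfer is harmless, since \cite[Lemma~2.3]{GP81} actually gives an equality of ideals in the same polynomial ring.
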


\subsection{Binomial edge ideals} For each integer $n\geq 1$, let $[n]$ denote the set $\{1,2,\dots, n\}$. Let $G$ be a finite simple graph with the vertex set $V(G)=[n]$. The \emph{binomial edge ideal} of $G$, which in this article will denoted $J_b(G)$, is defined to be the ideal of $\Bbbk[x_i,y_i\colon i\in V(G)]$ generated by all binomials $x_iy_j-x_jy_i$ where $\{i,j\}$ ranges over the edge set of $G$. 

A subgraph $H$ of a graph $G$ is called \emph{induced} if for any two vertices $i,j$ of $H$ such that $\{i,j\}$ is an edge of $G$, the edge $\{i,j\}$ is also an edge of $H$. Using a similar argument to the proof of Lemma~\ref{lem:determinantal-retracts}, induced subgraphs yield algebra retracts at the level of binomial edge ideals. Thus we state this fact below without proof.

\begin{lemma}\label{lem:binomial-retracts}
    Let $G$ be a finite simple graph with the vertex set $[n]$ and let $H$ be an induced subgraph of $G$. The natural inclusion
    \[
    \Bbbk\big[x_i,y_i\colon i\in V(H)\big]/J_b(H)\longrightarrow\Bbbk\big[x_i,y_i\colon i\in V(G)\big]/J_b(G)
    \]
    is an algebra retract.\qedhere
\end{lemma}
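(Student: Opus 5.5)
The plan is to apply Lemma~\ref{lem:restriction-retract} with $Q=\Bbbk[x_i,y_i\colon i\in V(G)]$, $I=J_b(G)$, and $S=\{x_i,y_i\colon i\in V(H)\}$. Then $Q_{|S}=\Bbbk[x_i,y_i\colon i\in V(H)]$, and it suffices to check that $J_b(H)$ is a restriction of $J_b(G)$ with respect to $S$, namely
\[
J_b(G)\,Q_{|S}=J_b(H)=J_b(G)\cap Q_{|S}.
\]
Once this equality holds, the composition $Q_{|S}/J_b(H)\to Q/J_b(G)\to Q_{|S}/J_b(H)$ is the identity, and the natural inclusion is an algebra retract.

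For the first equality, I will specialize each generator $f_{ij}=x_iy_j-x_jy_i$ with $\{i,j\}\in E(G)$ by setting $x_k=y_k=0$ for every $k\notin V(H)$. If both $i,j\in V(H)$, then $f_{ij}$ is unchanged. Because $H$ is induced, $\{i,j\}$ is then an edge of $H$, so $f_{ij}$ is a generator of $J_b(H)$. If $i\notin V(H)$ or $j\notin V(H)$, every term of $f_{ij}$ contains a variable indexed outside $V(H)$, so $f_{ij}$ maps to $0$. Hence $J_b(G)Q_{|S}$ is generated exactly by the $f_{ij}$ with $\{i,j\}\in E(H)$, which gives $J_b(G)Q_{|S}=J_b(H)$. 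This is the only place where inducedness is used.

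For the second equality, the inclusion $J_b(H)\subseteq J_b(G)\cap Q_{|S}$ holds because every edge of $H$ is an edge of $G$. For the reverse inclusion, let $\pi\colon Q\to Q_{|S}$ be the specialization map. It restricts to the identity on $Q_{|S}$. So for $f\in J_b(G)\cap Q_{|S}$ we get $f=\pi(f)\in \pi(J_b(G))\subseteq J_b(G)Q_{|S}=J_b(H)$ by the first equality.

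This general argument shows that the containment $IQ_{|S}\subseteq I\cap Q_{|S}$, once established, forces equality. The only substantive point is therefore that $I$ has a generating set each of whose elements either lies in $Q_{|S}$ or specializes to zero. This is precisely what inducedness of $H$ provides, so I do not expect a genuine obstacle. The step to watch is simply that the binomials vanish entirely under $\pi$ when only one endpoint lies outside $V(H)$, since each of their monomials involves both endpoints.
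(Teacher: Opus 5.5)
Your proposal is correct and follows the route the paper intends. The paper omits the proof and says it is analogous to Lemma~\ref{lem:determinantal-retracts}: one checks that $J_b(H)$ is a restriction of $J_b(G)$ with respect to $S=\{x_i,y_i\colon i\in V(H)\}$, since generators with an endpoint outside $V(H)$ specialize to $0$ and the rest are generators of $J_b(H)$ by inducedness, and then invokes Lemma~\ref{lem:restriction-retract}. Your explicit check of $J_b(G)\cap Q_{|S}=J_b(H)$ via $f=\pi(f)$ fills in a step the paper leaves implicit.
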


We can now classify which binomial edge ideals are Golod. 
Recall that the \emph{complete graph} on $n$ vertices, denoted by $K_n$, is the graph where any two distinct vertices are adjacent.

\begin{theorem}\label{thm:Golod-binomial}
    Let $G$ be a connected finite simple graph with the vertex set $[n]$, where $n$ is an integer, and let $Q=\Bbbk\big[x_i,y_i\colon i\in [n]\big]$. Assume that $G$ has no isolated vertex. Then the following are equivalent:
    \begin{enumerate}
        \item $Q/J_b(G)$ is Golod;
        \item $J_b(G)$ has a linear resolution;
        \item $G$ is a complete graph.
    \end{enumerate}
\end{theorem}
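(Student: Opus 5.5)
We prove the cycle of implications $(3)\Rightarrow(2)\Rightarrow(1)\Rightarrow(3)$. For $(3)\Rightarrow(2)$: if $G=K_n$, then $J_b(K_n)$ is exactly the ideal $I_2(X)$ of maximal minors of the generic $2\times n$ matrix
\[
X=\begin{pmatrix} x_1 & x_2 & \cdots & x_n\\ y_1 & y_2 & \cdots & y_n\end{pmatrix}.
\]
Its Eagon--Northcott resolution is linear (as already used in the proof of Theorem~\ref{thm:Golod-determinantal}(1)). The implication $(2)\Rightarrow(1)$ is the Backelin--Fr\"oberg result \cite{BF85}, or \cite[Theorem 4]{HRW99}.

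The substance is $(1)\Rightarrow(3)$, which we prove by contrapositive. Suppose $G$ is connected, has no isolated vertex, and is not complete. Then $G$ has two nonadjacent vertices $a,b$. By connectedness, choose a shortest path between them; its first three vertices $a=v_0,v_1,v_2$ induce a path $P_3$, since $v_0v_2$ is not an edge by minimality. So $G$ contains an induced subgraph $H\cong P_3$, with edges $\{1,2\},\{2,3\}$. By Lemma~\ref{lem:binomial-retracts} and Lemma~\ref{lem:Golod-retract}, it suffices to show that $R=\Bbbk[x_1,x_2,x_3,y_1,y_2,y_3]/J_b(P_3)$ is not Golod.

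For this we check that $J_b(P_3)=(f,g)$, with $f=x_1y_2-x_2y_1$ and $g=x_2y_3-x_3y_2$, is a complete intersection of two quadrics. This holds because $J_b$ of a path is a complete intersection. Concretely, $f$ and $g$ are irreducible and not associate, so they form a regular sequence in the UFD $Q$. A complete intersection of two forms of degree $\ge2$ is Gorenstein of height $2$ and not a hypersurface, so it is not Golod, as recalled in the introduction. Alternatively, its Koszul homology has a nonzero product $H_1\cdot H_1\to H_2$, so it is not even weakly Golod.

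The only point needing care is the regular-sequence check and the claim that a non-principal Gorenstein ideal inside $\n^2$ is not Golod. The latter is the standard fact quoted in the introduction. For the former, irreducibility of the $2\times2$ determinants $f,g$ (they involve distinct variable sets $x_1,y_1$ and $x_3,y_3$) suffices. The no-isolated-vertex hypothesis is harmless, since isolated vertices contribute only free variables.
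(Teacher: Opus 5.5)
Your proposal is correct and follows the paper's proof: the same cycle $(3)\Rightarrow(2)\Rightarrow(1)\Rightarrow(3)$, with $(1)\Rightarrow(3)$ obtained by passing to an induced $P_3$ via the retract Lemma~\ref{lem:binomial-retracts} and using that $J_b(P_3)$ is a non-hypersurface complete intersection, hence not Golod. The differences are cosmetic. You spell out the shortest-path argument for the induced $P_3$. You also verify the complete-intersection property directly (two non-associate irreducible quadrics in a UFD form a regular sequence) rather than citing \cite[Corollary 1.2]{EHH11}. Note that the distinct variable sets give non-associateness, not irreducibility.
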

\begin{proof}

    \emph{(1) $\implies $ (3):} 
    Write $P_3$ for the graph with three vertices and two edges. By \cite[Corollary 1.2]{EHH11}, the ring $\Bbbk\big[x_i,y_i\colon i\in [3]\big]/J_b(P_3)$ is a non-hypersurface complete intersection ring, and therefore is not Golod. Thus, if $Q/J_b(G)$ is Golod, then $G$ does not contain $P_3$ as an induced subgraph by Lemma~\ref{lem:binomial-retracts}. Since $G$ is connected, this makes it a complete graph.

    \emph{(3) $\implies $ (2):} if $G$ is a complete graph, then $Q/J_b(G)$ is a determinantal ideal of maximal minors of a generic $2\times n$ matrix, and thus has a linear resolution. 
    
    \emph{(2) $\implies $ (1):} This follows from \cite[Theorem 4]{HRW99}.
\end{proof}


Following similar arguments, it can be shown that $J_b(G)$, where $G$ is a complete graph with possible isolated vertices, are also the only Golod ideals in a larger class called \emph{generalized binomial edge ideals}; see \cite{Ra13} for a definition of generalized binomial edge ideals. We leave the details to the interested reader.

\subsection{Permanental ideals}

As another application of the techniques of this section, we study the class of the permanental rings. Recall that for an $n\times n$ matrix $X$, the \emph{permanent} of $X$ is defined to be
\[
\operatorname{perm} X \coloneqq \sum_{\sigma\in S_n} \prod_{i=1}^n x_{i,\sigma(i)}. 
\]
For an integer $t$ and a generic matrix $X$, let $P_t(X)$ denote the ideal of $\Bbbk[X]$ generated by all $t\times t$ subpermanents of $X$. Note that $P_t(X)=I_t(X)$ when $\Bbbk$ is of characteristic $2$. Similar to determinantal rings, there are natural algebra retracts concerning permanental rings. We hence state the analog of \cref{lem:determinantal-retracts} without proofs.

\begin{lemma}\label{lem:permanental-retracts}
    If $X,Y$ are generic matrices and $Y$ is a submatrix of $X$, then for any positive integer $t$, the natural map $\Bbbk[Y]/P_t(Y)\to \Bbbk[X]/P_t(X)$ is an algebra retract.
\end{lemma}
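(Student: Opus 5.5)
The plan is to imitate the proof of Lemma~\ref{lem:determinantal-retracts}: exhibit $P_t(Y)$ as a restriction of $P_t(X)$ and then apply Lemma~\ref{lem:restriction-retract}. By induction on the number of deleted rows and columns, and by symmetry under transposition (which preserves permanents), it is enough to treat the case where $Y$ is $X$ with its last column removed. So let $X$ be $m\times n$, let $Y$ be the $m\times(n-1)$ submatrix on the first $n-1$ columns, and let $S$ be the set of variables occurring in $Y$. Then $Q_{|S}=\Bbbk[Y]$, and we need to show $P_t(X)\Bbbk[Y] = P_t(X)\cap \Bbbk[Y] = P_t(Y)$.

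First we compute $P_t(X)\Bbbk[Y]$, i.e.\ we set $x_{i,n}=0$ for all $i$. Take a $t\times t$ subpermanent of $X$. If it avoids column $n$, it is a $t\times t$ subpermanent of $Y$ and is unchanged. If it uses column $n$, then every term $\prod x_{i,\sigma(i)}$ in its expansion contains exactly one entry from column $n$, so the whole subpermanent specializes to $0$. Hence $P_t(X)\Bbbk[Y]=P_t(Y)$.

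Next we compare $P_t(Y)$ with $P_t(X)\cap\Bbbk[Y]$. The inclusion $P_t(Y)\subseteq P_t(X)\cap \Bbbk[Y]$ holds because every generator of $P_t(Y)$ is a generator of $P_t(X)$. For the reverse inclusion, let $\pi\colon \Bbbk[X]\to\Bbbk[Y]$ be the specialization map. It restricts to the identity on $\Bbbk[Y]$, so for $f\in P_t(X)\cap\Bbbk[Y]$ we get $f=\pi(f)\in P_t(X)\Bbbk[Y]=P_t(Y)$. Thus $P_t(Y)$ is a restriction of $P_t(X)$, and Lemma~\ref{lem:restriction-retract} shows that $\Bbbk[Y]/P_t(Y)\to\Bbbk[X]/P_t(X)$ is an algebra retract. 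Composing such retracts handles arbitrary submatrices.

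None of these steps is a real obstacle. The only point to check is that a subpermanent using the deleted column vanishes under specialization. Unlike the determinantal case, this cannot be argued via multilinearity with signs, but it follows equally from the fact that each monomial in the expansion of the permanent uses exactly one entry from each column.
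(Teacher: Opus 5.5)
Your proof is correct and matches the route the paper intends: it states this lemma without proof as the analog of Lemma~\ref{lem:determinantal-retracts}, namely specialize the deleted column to zero, observe that each subpermanent becomes either $0$ or a subpermanent of $Y$, conclude that $P_t(Y)$ is a restriction of $P_t(X)$, and apply Lemma~\ref{lem:restriction-retract}. One small aside: your closing remark is slightly off, since the permanent is multilinear in its columns (symmetric rather than alternating), so the zero-column argument via multilinearity works just as well; this does not affect your proof.
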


Under certain conditions, we can obtain a characterization of Golod permanental rings.

\begin{theorem}\label{thm:Golod-permanental}
    Assume that one of the following holds:
    \begin{enumerate}[label=(\roman*)]
        \item $t=2$ and $\operatorname{char} \Bbbk \neq 2$.
        \item $t\in \{3,4\}$ and $\operatorname{char} \Bbbk=0$.
    \end{enumerate}
    Let $X$ be an $m\times n$ generic matrix where $m\leq n$. Then the following are equivalent:
    \begin{enumerate}
        \item $\Bbbk[X]/P_t(X)$ is Golod;
        \item $\Bbbk[X]/P_t(X)$ is a hypersurface;
        \item $P_t(X)$ has a linear resolution;
        \item $(m,n)=(t,t)$.
    \end{enumerate}
\end{theorem}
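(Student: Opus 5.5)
We prove the cycle (4)$\Rightarrow$(2)$\Rightarrow$(3)$\Rightarrow$(1)$\Rightarrow$(4). Note that the case $t>m$ gives $P_t(X)=0$; we assume throughout that $2\le t\le m\le n$, as in the determinantal case.

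The first three implications are formal.

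\emph{(4)$\Rightarrow$(2).} If $(m,n)=(t,t)$, then $P_t(X)$ is generated by the single nonzero form $\operatorname{perm}X$ of degree $t$. Hence $\Bbbk[X]/P_t(X)$ is a hypersurface.

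\emph{(2)$\Rightarrow$(3).} If $\Bbbk[X]/P_t(X)$ is a hypersurface, then $P_t(X)$ is principal, generated by a form of degree $t$. Its resolution $0\to Q(-t)\to Q$ is linear.

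\emph{(3)$\Rightarrow$(1).} This is \cite[Theorem 4]{HRW99}, or already \cite{BF85}.

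The substance is (1)$\Rightarrow$(4), which we prove by contrapositive. Assume $(m,n)\neq(t,t)$; then $X$ contains a submatrix of size $t\times(t+1)$. By Lemma~\ref{lem:permanental-retracts} and Lemma~\ref{lem:Golod-retract}, it suffices to exhibit a single submatrix $Y$ with $\Bbbk[Y]/P_t(Y)$ not Golod. Since the property passes down along retracts, the natural candidate is the smallest one, $Y$ of size $t\times(t+1)$. We then want to show that $R=\Bbbk[Y]/P_t(Y)$ is not Golod. Here $P_t(Y)$ has $t+1$ generators $p_1,\dots,p_{t+1}$ (deleting one column each), all of degree $t$. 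There are two routes.

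\textbf{Route A: Gorenstein or complete intersection.} Show that $R$ is Gorenstein or a complete intersection without being a hypersurface. For $t=2$, $\operatorname{char}\Bbbk\ne2$, the relevant literature (Laubenbacher--Swanson on $2\times2$ permanental ideals) describes the structure of $P_2$ of a $2\times3$ matrix: its minimal primes and primary decomposition. From this one can try to verify that the three quadrics form a regular sequence, or compute the resolution directly. If $P_2(Y)$ is a complete intersection of three quadrics, then $R$ is not Golod.

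\textbf{Route B: nontrivial Koszul product.} If $P_t(Y)$ is not a complete intersection, exhibit two Koszul homology classes of $R$ whose product is nonzero, so that $R$ is not even weakly Golod. The simplest test is
\[
\Tor_1^Q(R,\Bbbk)\cdot\Tor_1^Q(R,\Bbbk)\to\Tor_2^Q(R,\Bbbk),
\]
which is induced by the multiplication on the Koszul complex. It can be computed via Herzog's lifting of $\Tor$ classes \cite{Herzog91}. A nonzero product occurs, for instance, when some pair of generators $p_i,p_j$ has only the Koszul relation in minimal degree $2t$. So the concrete step is to compute the first syzygies of $P_t(Y)$ in degree $2t$ and check that the map
\[
\textstyle\bigwedge^2 \Tor_1^Q(R,\Bbbk)_t\to\Tor_2^Q(R,\Bbbk)_{2t}
\]
is nonzero.

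The main obstacle is this explicit syzygy and Koszul-product computation for $t=2,3,4$. This is exactly where the hypotheses on $t$ and on the characteristic enter: the known structure results for permanental ideals, such as Laubenbacher--Swanson for $t=2$ and the results of Boocher et al.\ or computer calculations for $t=3,4$ in characteristic zero, are only available in those cases.

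If the $t\times(t+1)$ case fails to work directly, the fallback is to choose instead a larger submatrix (for instance $3\times3$ with $t=2$) where $R$ is known to be Gorenstein but not a hypersurface, or where a nonzero Koszul product can be certified. In that case one must separately handle the shapes not containing that submatrix, namely $t\times n$ matrices, by a direct argument.
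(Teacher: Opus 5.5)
Your overall structure coincides with the paper's. The implications (4)$\Rightarrow$(2)$\Rightarrow$(3)$\Rightarrow$(1) are formal, and (1)$\Rightarrow$(4) goes by contraposition through a $t\times(t+1)$ submatrix $Y$, using Lemma~\ref{lem:permanental-retracts} and Lemma~\ref{lem:Golod-retract}. Your explicit assumption $t\le m$ is a fair clarification that the paper leaves tacit.

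The gap is that you never establish that $\Bbbk[Y]/P_t(Y)$ is not Golod. You offer two routes, execute neither, call the decisive computation ``the main obstacle'', and even propose a fallback. Once the retract lemma is in place, this is the whole content of (1)$\Rightarrow$(4). It is also exactly where the hypotheses on $t$ and $\operatorname{char}\Bbbk$ are used. So as written, the proposal does not prove the theorem.

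What is missing is a single fact, and it makes your Route A work. For a generic $t\times(t+1)$ matrix $Y$, the $t+1$ maximal subpermanents form a regular sequence. Hence $P_t(Y)$ is a complete intersection of codimension $t+1\ge 3$ generated in degree $t\ge 2$, so it is not Golod. The paper takes this from \cite[Theorem~4.1]{LS00} in case (i) and from \cite[Proposition~3.10]{BCMV25} (Boralevi et al., not Boocher) in case (ii). Neither the syzygy analysis of Route B nor a larger submatrix is needed. Route B's criterion is in any case not a precise condition for a nonzero product in $\Tor_2$.

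For $t=2$ you can verify the fact by hand. Write $Y=\begin{pmatrix}a&b&c\\ d&e&f\end{pmatrix}$. On $\{a\neq 0\}$, the equations $ae+bd=af+cd=0$ give $e=-bd/a$ and $f=-cd/a$, and then $bf+ce=-2bcd/a$. Since $2\neq 0$, this part of $V(P_2(Y))$ is cut out by $bcd=0$ and has dimension $3$. Row and column permutations act transitively on the variables, and every nonzero point has some nonzero coordinate. Hence $V(P_2(Y))$ has dimension $3$, so the three quadrics have height $3$ and form a regular sequence. In characteristic $2$ the factor $2$ kills the last equation and the height drops to $2$, which is why hypothesis (i) is needed.
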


\begin{proof}
    \emph{(1)$\implies$ (4):} We will use contraposition. Assume that $(m,n)\neq (t,t)$. In particular, we can let $Y$ be a $t\times (t+1)$ submatrix of $X$.  Then $P_t(Y)$ is a non-hypersurface complete intersection (due to \cite[Theorem~4.1]{LS00} if (i) holds and \cite[Proposition~3.10]{BCMV25} if (ii) holds). In particular $\Bbbk[Y]/P_t(Y)$ is not Golod, and thus neither is $\Bbbk[X]/P_t(X)$ by \cref{lem:permanental-retracts}, as desired. 

    \emph{(4) $\implies$ (2)$\implies$ (3):} If $(m,n)=(t,t)$, then $P_t(X) = \left(\operatorname{perm} X\right)$. Thus $\Bbbk[X]/P_t(X)$ is a hypersurface, and has a linear resolution. 

    \emph{(3) $\implies $ (1):} This follows from \cite[Theorem 4]{HRW99}.
\end{proof}

The above proof hinges on \cref{lem:permanental-retracts} and the fact that under certain conditions, $\Bbbk[Y]/P_t(Y)$ is not Golod for a $t\times (t+1)$ generic matrix $Y$. Regarding condition (ii), the fact that $\Bbbk[Y]/P_t(Y)$ is a complete intersection for a $t \times (t+1)$ generic matrix $Y$ is, in fact, conjectured to hold in greater generality (cf. \cite[Conjecture~3.4]{BCMV25}). Therefore a weaker question is whether \cref{thm:Golod-permanental} holds under the same hypotheses as \cite[Conjecture~3.4]{BCMV25}. As a byproduct of Theorem \ref{thm:Golod-permanental}, we obtain a new class of graded rings whose Golodness depends on characteristic.

\begin{corollary}
    Let $X$ be a $2\times n$ generic matrix where $n\geq 3$. Then $\Bbbk[X]/P_2(X)$ is Golod if and only if $\operatorname{char} \Bbbk =2$.
\end{corollary}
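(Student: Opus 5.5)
The corollary splits by characteristic, and each case reduces to a result already in the paper.

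\emph{Characteristic different from $2$.} Here we are in case (i) of Theorem~\ref{thm:Golod-permanental}, with $t=2$, $m=2$ and $n\geq 3$. Condition (4) of that theorem asks for $(m,n)=(2,2)$, which fails because $n\geq 3$. By the equivalence (1)$\iff$(4), the ring $\Bbbk[X]/P_2(X)$ is therefore not Golod. The underlying mechanism is the one used in the proof of that theorem: choose a $2\times 3$ submatrix $Y$ of $X$. Then $P_2(Y)$ is a complete intersection of three quadrics by \cite[Theorem~4.1]{LS00}, so it is not a hypersurface and hence not Golod. Lemma~\ref{lem:permanental-retracts} then transfers non-Golodness from $\Bbbk[Y]/P_2(Y)$ up to $\Bbbk[X]/P_2(X)$.

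\emph{Characteristic $2$.} In this case $-1=1$, so $x_{11}x_{2j}+x_{1j}x_{21}=x_{11}x_{2j}-x_{1j}x_{21}$ for every $2\times 2$ subpermanent. Hence $P_2(X)=I_2(X)$, as already noted in the paper. Since $X$ is $2\times n$, $t=2=\min\{m,n\}$. Theorem~\ref{thm:Golod-determinantal}(1), applied with $X$ generic, then shows that $\Bbbk[X]/I_2(X)$ is Golod. Concretely, the ideal of maximal minors has a linear resolution given by the Eagon--Northcott complex, and this complex is characteristic free. Golodness then follows from \cite[Theorem 4]{HRW99}.

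Combining the two cases gives the stated equivalence. The only point needing care is that the results invoked hold in the required characteristics. Theorem~\ref{thm:Golod-determinantal} holds over an arbitrary field, since the section works over an arbitrary $\Bbbk$ and the Eagon--Northcott resolution is characteristic free. The input from \cite{LS00} is used only when $\operatorname{char}\Bbbk\neq 2$, which is precisely the setting of case (i) of Theorem~\ref{thm:Golod-permanental}.
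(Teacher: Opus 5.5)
Your proof is correct and follows the paper's argument. The paper likewise cites Theorem~\ref{thm:Golod-determinantal} for characteristic $2$, where $P_2(X)=I_2(X)$ is the ideal of maximal minors, and Theorem~\ref{thm:Golod-permanental}(i) for characteristic $\neq 2$, since $(2,n)\neq(2,2)$.
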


\begin{proof}
    This follows from \cref{thm:Golod-determinantal} and \cref{thm:Golod-permanental}.
\end{proof}

We remark that one can ask when $P_t(X)$ is Golod in the cases where $X$ is a generic symmetric/generic Hankel matrix. The situation is similar to generic matrices, in the sense that most known results are about $P_2(X)$, and not much about $P_t(X)$ for $t\geq 3$. Experiments on Macaulay2 raises the following question.

\begin{question}
    Is it true that if $X$ is a generic symmetric or a generic Hankel matrix, then $P_2(X)$ is Golod if and only if $X$ is of size $2\times 2$?
\end{question}

\subsection{Polarization and restriction}

We next focus on monomial ideals. If $I$ is a monomial ideal, it has a unique set of minimal monomial generators, denoted by $\mingens(I)$. If $I$ is a monomial ideal, and $m$ is a monomial, then we denote by $I^{\leq m}$ the monomial ideal generated by monomials in $\mingens(I)$ that divide $m$. This was first introduced by Herzog, Hibi, and Zheng \cite{HHZ04Dirac}, although the idea can be found in older literature (see, e.g., \cite{BPS98}). In general, $I^{\leq m}$ is not a restriction of $I$. 
However, if $I$ is assumed to be squarefree we have the following.

\begin{lemma}\label{lem:squarefree-retract}
    Let $I$ be a squarefree monomial ideal in $Q=\Bbbk[x_1,\dots, x_n]$, and $m$ a monomial. Then $I^{\leq m}$ is a restriction of $I$ (with respect to $\supp(m)$). In particular, if $Q/I$ is Golod, then so is $Q/I^{\leq m}$.
\end{lemma}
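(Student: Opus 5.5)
Set $S=\supp(m)$. We will show that the ideal $IQ_{|S}$ obtained by setting every variable outside $S$ to zero, the ideal $I\cap Q_{|S}$, and $I^{\leq m}$ all coincide, viewed inside $Q_{|S}$. Since $I$ is squarefree, divisibility by $m$ depends only on the support. A squarefree monomial $u$ divides $m$ if and only if $\supp(u)\subseteq S$, that is, if and only if $u\in Q_{|S}$. Hence $I^{\leq m}$ is generated by the elements of $\mingens(I)$ lying in $Q_{|S}$. (Here $I^{\leq m}$ is read as an ideal of $Q_{|S}$, or equivalently we compare extensions to $Q$.)

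First, $IQ_{|S}\subseteq I^{\leq m}Q_{|S}$. The ideal $IQ_{|S}$ is generated by the images of the generators $u\in\mingens(I)$ under $x_j\mapsto 0$ for $x_j\notin S$. Such an image is $u$ itself if $\supp(u)\subseteq S$, and $0$ otherwise. So $IQ_{|S}$ is generated by exactly the generators of $I^{\leq m}$, and in fact equality holds.

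Second, $I^{\leq m}\subseteq I\cap Q_{|S}\subseteq IQ_{|S}$. The first inclusion holds because the generators of $I^{\leq m}$ lie in $I$ and in $Q_{|S}$. For the second, the map $Q\to Q_{|S}$ restricts to the identity on $Q_{|S}$, so every $f\in I\cap Q_{|S}$ equals its own image, which lies in $IQ_{|S}$.

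Combining the two steps gives $I^{\leq m}=IQ_{|S}=I\cap Q_{|S}$, so $I^{\leq m}$ is the restriction $I_{|S}$. The Golod statement then follows from Lemma~\ref{lem:restriction-retract}. To pass from $Q_{|S}/I^{\leq m}$ to $Q/I^{\leq m}$ we also need that adjoining polynomial variables preserves Golodness. This holds because $Q/I^{\leq m}$ is a polynomial extension of $Q_{|S}/I^{\leq m}$; equivalently, both Koszul homologies have the same structure up to tensoring with an exterior algebra on the extra variables, which are regular.

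Everything here is routine. The only point needing care is this final change of ambient ring, and the fact that the squarefree hypothesis is exactly what makes ``divides $m$'' equivalent to ``support contained in $S$''.
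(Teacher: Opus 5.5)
Your argument is correct and is essentially the paper's proof: the squarefree hypothesis makes a minimal generator divide $m$ exactly when its support lies in $S=\supp(m)$, which forces $IQ_{|S}=I^{\leq m}=I\cap Q_{|S}$, and Lemma~\ref{lem:restriction-retract} finishes. You are slightly more careful than the paper, which writes $m=\prod_{x\in S}x$ (tacitly taking $m$ squarefree) and silently identifies $Q_{|S}/I^{\leq m}$ with $Q/I^{\leq m}$. Your extra step for the latter is right, though its cleanest justification is that the variables outside $S$ form a regular sequence of linear forms on $Q/I^{\leq m}$, so \cite[Proposition~5.2.4]{Avramov1998} applies; this is the same fact the paper invokes in the proof of Theorem~\ref{thm:Golod-restriction}.
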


\begin{proof}
    Set $S=\supp(m)$ and note that $m=\prod_{x\in S}x$. Since $I$ is squarefree any monomial in $\mingens(I)$ either contains a variable not in $S$, or it divides $m$. Therefore $IQ_{|S}=I^{\leq m}=I\cap Q_{|S}$, so $I_{|S}=I^{\leq m}$ is a restriction of $I$, as desired. The last statement follows from Lemma~\ref{lem:restriction-retract}.
\end{proof}

As mentioned earlier, $I^{\leq m}$ is not necessarily a restriction of $I$ if $I$ is not squarefree. Nevertheless, we will show that Golodness descends from $I$ to $I^{\leq m}$ for any monomial ideal $I$. To do so, we recall the concept of polarization.  

\begin{notation}\label{not:polarization}
    Let $I$ be a monomial ideal in $Q=\Bbbk[x_1,\dots ,x_n]$. For each $i\in [n]$, let $e_i$ denote the highest power of $x_i$ among the minimal generators of $I$. For each $m=x_1^{a_n}\cdots x_n^{a_n}\in \mingens(I)$, the \emph{polarization} of $m$ is the monomial 
        \[
    \widetilde{m}= x_{1,1}x_{1,2}\cdots x_{1,a_1}x_{2,1}x_{2,2}\cdots x_{2,a_2} \cdots x_{n,1}x_{n,2}\cdots x_{n,a_n}
    \]
    in the polynomial ring $Q_{\pol(I)}=\Bbbk[x_{i,j}\colon i\in [n],j\in [e_i]]$. If $\mingens(I)=\{m_1,\dots, m_q\}$, then the \emph{polarization} of $I$, denoted by $\pol(I)$, is the monomial ideal (minimally) generated by $\widetilde{m_1},\dots, \widetilde{m_q}$ in the ring $Q_{\pol}$. 
\end{notation}

\begin{lemma}\label{lem:pol-vs-restriction}
    Use the set-up as in Notation~\ref{not:polarization}. Let $m$ be a monomial in $Q$. Then the following holds:
    \begin{enumerate}
        \item $Q/I$ is Golod if and only if $Q_{\pol} /\pol(I)$ is Golod;
        \item $\pol(I^{\leq m})Q_{\pol(I)}= \pol(I)^{\leq \widetilde{m}}$.
    \end{enumerate}
\end{lemma}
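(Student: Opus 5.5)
For part (1), the plan is to use that polarization is a deformation by a regular sequence. Concretely, the linear forms $x_{i,j}-x_{i,1}$ for $i\in[n]$ and $2\le j\le e_i$ form a regular sequence $\ell$ on $Q_{\pol}/\pol(I)$. This is the standard fact that polarization preserves graded Betti numbers. Moreover,
\[
Q_{\pol}/(\pol(I)+(\ell)) \cong Q/I.
\]
I would then invoke the classical result that the Golod property is preserved and reflected under taking the quotient by a regular sequence of linear forms which is also regular on the polynomial ring. This is the same situation as passing from $Q_{\pol}\to Q_{\pol}/\pol(I)$ to $Q\to Q/I$. It can be seen through Poincaré series. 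If $\ell$ is a regular element of degree one on both $Q_{\pol}$ and $A=Q_{\pol}/\pol(I)$, then
\[
P^{A}_\Bbbk(t)=(1+t)\,P^{A/\ell A}_\Bbbk(t).
\]
The Koszul homology dimensions are also equal, $\dim H_j(K^{A})=\dim H_j(K^{A/\ell A})$, because $H(K^A)\cong \Tor^{Q_{\pol}}(A,\Bbbk)$ and the Betti numbers are preserved. The embedding dimension drops by exactly one, since $\ell\notin \pol(I)+\n^2$. Hence Serre's bound for $A$ is exactly $(1+t)$ times Serre's bound for $A/\ell A$, and equality holds in one case if and only if it holds in the other. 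Iterating this over the sequence gives (1). The one point needing care is that, at each step, the remaining linear forms still give a regular sequence on the intermediate quotient and remain nonzero modulo the square of the maximal ideal. That is clear, since they are distinct variable differences.

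For part (2), I would do a direct comparison of generators. Write $m=x_1^{b_1}\cdots x_n^{b_n}$, and interpret $\widetilde m$ as $\prod_i x_{i,1}\cdots x_{i,\min(b_i,e_i)}$. The generators of $\pol(I^{\le m})$ are the $\widetilde{u}$ for $u\in\mingens(I)$ with $u\mid m$. This uses that $\mingens(I^{\le m})\subseteq \mingens(I)$, and that the polarization of $u$ does not depend on the ambient ideal, only the number of new variables does. It is extended to $Q_{\pol(I)}$ because all variables it uses lie there, as $e_i$ bounds the exponents. The generators of $\pol(I)^{\le\widetilde m}$ are the $\widetilde u$, $u\in\mingens(I)$, with $\widetilde u\mid \widetilde m$. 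For $u=\prod x_i^{a_i}$ with $a_i\le e_i$, both divisibilities are equivalent to $a_i\le b_i$ for all $i$. Indeed, $\widetilde u\mid\widetilde m$ means $a_i\le\min(b_i,e_i)$, and since $a_i\le e_i$ this is the same as $a_i\le b_i$. So the two generating sets coincide, which proves (2).

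The main obstacle is (1), specifically justifying the Koszul homology and embedding dimension bookkeeping under reduction by a linear regular sequence. If preferred, I would simply cite the known statement that polarization preserves Golodness, as in the literature on Golod monomial ideals.
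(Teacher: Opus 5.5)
Your proposal is correct and follows the paper's proof. For (1), the paper also observes that $Q/I$ is $Q_{\pol}/\pol(I)$ modulo a linear regular sequence (citing Swanson) and then cites Avramov's Proposition~5.2.4, whose content your Poincar\'e-series comparison reproves; for (2), it makes the same observation that $u\mid m$ if and only if $\widetilde u\mid\widetilde m$ for $u\in\mingens(I)$, and your truncation of $\widetilde m$ at $\min(b_i,e_i)$ is harmless extra care that the paper leaves implicit.
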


\begin{proof}
    \begin{enumerate}
        \item It is known from \cite[Proposition~2.1]{Swan06} that $Q/I$ is isomorphic to $Q_{\pol} /\pol(I)$ modulo a linear regular sequence. The result then follows from \cite[Proposition~5.2.4]{Avramov1998}.
        \item It is straightforward that for any monomial $m'\in \mingens(I)$, $m'$ divides $m$ if and only if $\widetilde{m'}$ divides $\widetilde{m}$. We then have 
        \begin{align*}
            \pol(I^{\leq m})Q_{\pol(I)} &= (\widetilde{m'}\colon m'\in \mingens(I), m'\mid m)\\
            &= (\widetilde{m'}\colon \widetilde{m'}\in \mingens(\pol(I)), \widetilde{m'}\mid \widetilde{m})\\
            &= (m''\colon m''\in \mingens(\pol(I)), m''\mid \widetilde{m})\\
            &= \pol(I)^{\leq \widetilde{m}},
        \end{align*}
        as desired.\qedhere
    \end{enumerate}
\end{proof}

\begin{theorem}\label{thm:Golod-restriction}
    Let $I$ be a monomial ideal in a polynomial ring $Q$ and $m$ a monomial. If $Q/I$ is Golod, then so is $Q/I^{\leq m}$.
\end{theorem}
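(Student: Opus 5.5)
The plan is to reduce to the squarefree case via polarization and then apply Lemma~\ref{lem:squarefree-retract}. Assume $Q/I$ is Golod. By Lemma~\ref{lem:pol-vs-restriction}(1), the ring $Q_{\pol}/\pol(I)$ is Golod, and $\pol(I)$ is a squarefree monomial ideal in $Q_{\pol(I)}$. Apply Lemma~\ref{lem:squarefree-retract} to $\pol(I)$ and a suitable squarefree monomial $\widetilde{m}$. This shows that $Q_{\pol(I)}/\pol(I)^{\leq \widetilde{m}}$ is Golod. By Lemma~\ref{lem:pol-vs-restriction}(2), the ideal $\pol(I)^{\leq\widetilde m}$ is the extension $\pol(I^{\leq m})Q_{\pol(I)}$.

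Before this, I will normalize $m$ so that $\widetilde{m}$ makes sense in $Q_{\pol(I)}$. Replace $m=x_1^{b_1}\cdots x_n^{b_n}$ by $m'=x_1^{\min(b_1,e_i)}\cdots x_n^{\min(b_n,e_n)}$, with $e_i$ as in Notation~\ref{not:polarization}. A minimal generator of $I$ has $x_i$-exponent at most $e_i$, so it divides $m$ if and only if it divides $m'$. Hence $I^{\leq m}=I^{\leq m'}$, and we may assume $b_i\le e_i$. Then $\widetilde m=\prod_i x_{i,1}\cdots x_{i,b_i}$ is a squarefree monomial in $Q_{\pol(I)}$.

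The remaining step, which I expect to be the only subtle one, is passing from Golodness of $Q_{\pol(I)}/\pol(I^{\leq m})Q_{\pol(I)}$ to Golodness of $Q/I^{\leq m}$. Two mismatches have to be handled.

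\begin{itemize}
\item \emph{Different ambient ring.} The polarization $\pol(I^{\leq m})$ natively lives in the smaller ring $Q_{\pol(I^{\leq m})}$, generated by the variables $x_{i,j}$ with $j\le e_i'$, where $e_i'\le e_i$ are the maximal exponents for $I^{\leq m}$. The extension to $Q_{\pol(I)}$ only adjoins extra polynomial variables. Adjoining variables is a flat extension that leaves the Koszul homology and the Poincar\'e series essentially unchanged up to the factor $(1+t)^{\#\text{new vars}}$. So Golodness is preserved; alternatively, one can cite \cite[Proposition~5.2.4]{Avramov1998} for quotients by regular sequences of linear forms. Thus $Q_{\pol(I^{\leq m})}/\pol(I^{\leq m})$ is Golod.
\item \emph{Polarization versus the original ideal.} Applying Lemma~\ref{lem:pol-vs-restriction}(1) to the ideal $I^{\leq m}$ itself then gives that $Q/I^{\leq m}$ is Golod.
\end{itemize}

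One point must be checked: the polarization used in part (2) of Lemma~\ref{lem:pol-vs-restriction} indexes variables compatibly with the one intrinsic to $I^{\leq m}$. This holds because both use the variables $x_{i,1},\dots,x_{i,a_i}$ for an exponent $a_i$, so the generators coincide.
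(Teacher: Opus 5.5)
Your proposal is correct and follows essentially the same route as the paper's proof. You polarize via Lemma~\ref{lem:pol-vs-restriction}(1), pass to $\pol(I)^{\leq \widetilde m}=\pol(I^{\leq m})Q_{\pol(I)}$ using the squarefree retract lemma and Lemma~\ref{lem:pol-vs-restriction}(2), discard the extra polynomial variables with \cite[Proposition~5.2.4]{Avramov1998}, and depolarize. Your normalization of $m$ (capping its exponents at the $e_i$ so that $\widetilde m$ lives in $Q_{\pol(I)}$) and your use of the exponents $e_i'$ of $I^{\leq m}$, rather than those of $m$, to describe the adjoined variables clarify points that the paper's proof glosses over.
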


\begin{proof}
    We will use the notation as in Notation~\ref{not:polarization}. Since $Q/I$ is Golod, so is $Q_{\pol(I)}/\pol(I)$ by Lemma~\ref{lem:pol-vs-restriction} (1), and thus so is $Q_{\pol(I)}/ \pol(I^{\leq m})Q_{\pol(I)}$ by Lemmas~\ref{lem:restriction-retract} and \ref{lem:pol-vs-restriction} (2). Set $m=x_1^{a_1}\cdots x_n^{a_n}$. We have that \[
    Q_{\pol(I)}/ \pol(I^{\leq m})Q_{\pol(I)} = Q_{\pol(I^{\leq m})}/\pol(I^{\leq m})[x_{i,j}\colon i\in [n], j\in [a_i+1,e_i]]
    \]
    and thus $Q_{\pol(I^{\leq m})}/\pol(I^{\leq m})$ is Golod by \cite[Proposition~5.2.4]{Avramov1998}. Therefore, $I^{\leq m}$ is Golod by Lemma~\ref{lem:pol-vs-restriction} (1), as desired.
\end{proof}

\section{Second technique: Multidegrees and lcm lattices}\label{sec:multideg-tech}

Let $\Bbbk$ be a field, $Q=\Bbbk[x_1,\ldots,x_n]$ with the standard grading, and $\m = (x_1,\ldots,x_n)Q$. Let $I \subseteq Q$ be a homogeneous ideal, and set $R=Q/I$. We now briefly recall the definition of Massey products, and point the interested reader to \cite{GolodMassey} (see also \cite{Avramov1998}, or \cite{Kat17}) for more information.


For $t=2$, given $h_1,h_2 \in H_{\geq 1}(K^R)$ the binary Massey product $\mu_2(h_1,h_2)$ is the usual product $h_1h_2$ on $H_*(K^R)$. For $t \geq 3$, $\mu_t$ is a partially-defined function that assigns to $h_1,\ldots,h_t \in H_{\geq 1}(K^R)$ a set $\mu_t(h_1,\ldots,h_t) \subseteq H_*(K^R)$, and it is defined if and only if for all $1 \leq i \leq j \leq t$ with $(i,j) \ne (1,t)$ there exist $h_{i,j} \in K^R$ such that
\begin{enumerate}
    \item $h_{i,i} \in Z(K^R)$ and $[h_{i,i}]= h_i$ for all $i=1,\ldots,t$;
    \item $\partial(h_{i,j}) = \sum_{k=i}^j \overline{h_{i,k}}h_{k+1,j}$,
\end{enumerate}
where $Z(K^R)$ denotes Koszul cycles, $[-]$ denotes the homology class of a cycle, and $\overline{h} = (-1)^{i+1}h$ for $h \in K_i^R$. The element $h_{1,t}:=\sum_{l=1}^t \overline{h_{1l}}h_{lt}$ is then a cycle, and its class $[h_{1,t}] \in H_{\geq 1}(K^R)$ is called a $t$-ary Massey product of $h_1,\ldots,h_t$; the set $\mu_t(h_1,\ldots,h_t)$ is the collection of all such Massey products. We say that $R=Q/I$ satisfies $(B_t)$ if all $k$-ary Massey products are defined, and contain only zero, for $k=2,\ldots,t$. It can be shown that, if $R$ satisfies $(B_{t-1})$, then all $t$-ary Massey products are defined, and contain only one element, see \cite[Proposition 2.3]{May69}. Finally, we recall that, by Golod's characterization, a ring $R$ is Golod if and only if it satisfies $(B_t)$ for all $t$ \cite{GolodMassey}. Moreover, it suffices to show that all $t$-ary Massey products vanish on a fixed choice of a $\Bbbk$-basis for $H_{\geq 1}(K^R)$.

Now let $I \subseteq Q=\Bbbk[x_1,\ldots,x_n]$ be a monomial ideal, and $R=Q/I$.

For a monomial $u = x_1^{a_1} \cdots x_n^{a_n} \in Q$, let $\bdeg(u) = (a_1,\ldots,a_n)$ denote its multidegree. Note that the Koszul complex $K^{R} \cong \bigwedge (\bigoplus_{i=1}^n Re_i)$, where $\partial(e_i)=x_i$, is multigraded by setting $\bdeg(e_i) = \bdeg(x_i)$ for all $i$. 

Choose a total order $\prec$ on $\mingens(I)$, the minimal monomial generating set of $I$. The Taylor complex $\mathbb{T}_\bullet$ is the complex of free $Q$-modules with basis $\{e_J \mid J \subseteq \mingens(I)\}$, and multi-graded with respect to $\bdeg(e_J) = \bdeg(u_J)$, where $u_j=(\lcm(u) \mid u \in J)$. The differential is given by
\[
\partial(e_J) = \sum_{u\in J} (-1)^{\sigma(u,J)} \frac{u_J}{u_{J \setminus \{u\}}} e_{J \setminus \{u\}},
\]
where $\sigma(u,J)$ is the cardinality of the set $\{v \in J \mid v \prec u\}$. One has that $\mathbb{T}_\bullet$ is a dg-algebra, with product
\[
e_{J} \cdot e_{L} = \begin{cases} (-1)^{\sigma(J,L)} \frac{u_Ju_L}{u_{J \cup L}} e_{J \cup L} & \text{ if } J \cap L = \emptyset \\
0 & \text{ otherwise,}
\end{cases}
\]
where $\sigma(J,L)$ is the cardinality of $\{(u,v) \in J \times L \mid u \prec v\}$. Further, there are quasi-isomorphisms of dg-algebras
\begin{equation}
    \label{eq Taylor}
\mathbb{T}_\bullet \otimes \Bbbk \longleftarrow \mathbb{T}_{\bullet} \otimes K^Q \longrightarrow R \otimes K^Q \cong K^{R},
\end{equation}
yielding multigraded isomorphisms $H_*(\mathbb{T}_\bullet \otimes \Bbbk) \cong \Tor_*^Q(R,\Bbbk) \cong H_*(K^{R})$, and Massey products can be computed either over $K^{R}$ or over $\mathbb{T}_\bullet \otimes \Bbbk$ (see \cite{Kat17}).

The following remark is an extension of the proof of \cite[Proposition~6]{HRW99}.

\begin{remark}\label{rem_lcm_Koszul}

The following is well-known to experts (see for example \cite{Kat17}): if $\mathbb{T}_\bullet$ denotes the Taylor resolution of $R=Q/I$ over $Q$, then 
because of (\ref{eq Taylor}) one can find a multigraded basis for the Koszul homology. 
This leads to two observations:
\begin{enumerate}
    \item the only non-trivial multidegrees in the Koszul homology are in the lcm lattice of $I$;
    \item if $h_1,\dots, h_r\in H_*(K^R)$ are multigraded elements and $h\in \mu_t(h_1,\dots, h_t)$ is a Massey product, then 
\[
\bdeg(h)= \sum_{i=1}^t \bdeg(h_i).
\]
\end{enumerate}
In particular, if $I$ is a squarefree monomial ideal, and two homogeneous elements have a non-trivial intersection of supports, then any Massey product involving them will be trivial, as its multidegree no longer lies in the lcm lattice of $I$. 
\end{remark}

The goal of this section is to use the above observation to give a complete classification for Golod cover ideals of graphs. Let $G=(V(G),E(G))$ be a finite simple graph. A \emph{vertex cover} of $G$ is a subset $V \subseteq V(G)$ such that for each edge $\{x,y\} \in E(G)$ one has $\{x,y\} \cap V \ne \emptyset$. The \emph{cover ideal} of $G$, denoted by $J(G)$, is the monomial ideal in $\Bbbk[V(G)]$ generated by $\prod_{x\in A}x$ where $A$ ranges among the minimal (with respect to inclusion) vertex covers of $G$. Note that $J(G) = \bigcap_{\{x,y\} \in E(G)} (x,y)$ and, in particular, cover ideals are Alexander dual to edge ideals (see Section \ref{Section hypergraphs} for the definition of edge ideals).

\begin{remark} \label{rem:unmixed cover}
    Observe that, as a consequence of the description $J(G) = \bigcap_{\{x,y\} \in E(G)} (x,y)$, one has that the class of cover ideals of simple graphs coincides with the class of unmixed squarefree monomial ideals of height $2$.
\end{remark}

Consider an odd cycle $G=C_{2n-1}$ where $n\geq 2$. Then it is clear that any (minimal) vertex cover of $G$ has at least $n$ vertices. In particular, this means that any two minimal generators of $J(G)$, and thus any two multidegrees in its lcm lattice, have a non-trivial intersection of supports. If $G$ contains an odd cycle as an induced subgraph, one can make similar considerations to conclude that any two minimal vertex covers have non-trivial intersection of supports. We then obtain the following.

\begin{proposition}\label{prop:odd-cycle-Golod}
    If $G$ is a graph that contains an odd cycle as an induced subgraph, then $J(G)$ is Golod.
\end{proposition}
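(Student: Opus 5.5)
The plan is to use the multidegree observation in Remark~\ref{rem_lcm_Koszul}, together with Golod's criterion that $R=Q/J(G)$ is Golod if and only if all Massey products $\mu_t$, $t\ge 2$, vanish on a fixed multigraded $\Bbbk$-basis of $H_{\geq 1}(K^R)$. Since $J(G)$ is squarefree, its lcm lattice consists of squarefree monomials. By Remark~\ref{rem_lcm_Koszul}(1), every basis element $h$ of $H_{\ge 1}(K^R)$ has a multidegree $\bdeg(h)$ equal to the multidegree of a squarefree monomial $u_J=\lcm(J)$, where $J$ is a nonempty set of minimal generators of $J(G)$.

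\textbf{Step 1: key combinatorial claim.} Let $C$ be an induced odd cycle of $G$ with vertex set $\{v_1,\dots,v_{2n-1}\}$. Any vertex cover $A$ of $G$ restricts to a vertex cover $A\cap V(C)$ of $C$, because the edges of $C$ are edges of $G$. A cover of $C_{2n-1}$ has at least $n$ vertices. Hence if $A,B$ are two vertex covers of $G$, then $A\cap V(C)$ and $B\cap V(C)$ are subsets of the $(2n-1)$-set $V(C)$ of size at least $n$ each, and by pigeonhole they intersect. Consequently any two elements of the lcm lattice, each divisible by some minimal generator, have supports that share a vertex of $C$. I would also remark that the induced hypothesis is not really needed: any odd cycle subgraph works by the same argument.

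\textbf{Step 2: Massey products.} Take basis elements $h_1,\dots,h_t$ with $t\ge 2$. By Remark~\ref{rem_lcm_Koszul}(2), any element of $\mu_t(h_1,\dots,h_t)$ has multidegree $\sum_i\bdeg(h_i)$. By Step 1, $\bdeg(h_1)$ and $\bdeg(h_2)$ both have entry $1$ at some common vertex. So the sum has an entry $\ge 2$, is not squarefree, and lies outside the lcm lattice. By Remark~\ref{rem_lcm_Koszul}(1), the homology in that multidegree is zero, so every such Massey product is zero whenever it is defined.

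\textbf{Step 3: induction via Golod's criterion.} I would induct on $t$ using \cite[Proposition 2.3]{May69}. If $(B_{t-1})$ holds, then all $t$-ary Massey products are defined, and by Step 2 they vanish. The base case $t=2$ is the ordinary product, which is covered by the same multidegree argument. Hence $(B_t)$ holds for all $t$, and $R$ is Golod.

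The main obstacle is making Remark~\ref{rem_lcm_Koszul}(2) rigorous: the intermediate elements $h_{i,j}$ defining the Massey product must be chosen multihomogeneous. I would handle this by working in the multigraded dg-algebra $\mathbb{T}_\bullet\otimes\Bbbk$ from (\ref{eq Taylor}), where every boundary can be chosen multihomogeneous by projecting onto the relevant multidegree component. The Massey product then automatically lands in the multidegree $\sum_i\bdeg(h_i)$.
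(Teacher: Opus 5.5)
Your proposal is correct and follows the paper's argument. The pigeonhole count on the odd cycle shows that any two supports in the lcm lattice of $J(G)$ meet. By Remark~\ref{rem_lcm_Koszul}, every Massey product therefore lands in a non-squarefree multidegree, where the Koszul homology vanishes. Your extra care in choosing multihomogeneous defining systems in $\mathbb{T}_\bullet\otimes\Bbbk$, combined with the uniqueness from \cite{May69}, makes explicit what the paper leaves implicit in that remark.
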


A graph that contains no odd cycles is called a \emph{bipartite graph}. Recall that a subset of vertices of a graph is called an \emph{independent set} if no two vertices of this subset are connected by an edge. Note that the complement of an independent set is a vertex cover, and vice versa. It is well-known that a graph $G$ is bipartite if and only if we can write $V(G)=A\sqcup B$ where $A$ and $B$ are independent sets of $G$. Moreover, $G$ is called \emph{complete bipartite} if  $G$ is bipartite with $V(G)=A\sqcup B$ for some independent sets $A$ and $B$, and 
\[
E(G)=\{\{x,y\}\ : x\in A, y\in B \}\}.
\]
In this case, if we let $|A|=m$ and $|B|=n$, the graph $G$ is denoted by $K_{m,n}$. 

\begin{lemma}\label{lem:A-and-B-are-disjoint-MVC}
    Let $G$ be a connected bipartite graph with $V(G)=A\sqcup B$ for some independent sets $A$ and $B$. Then the only two disjoint minimal vertex covers of $G$ are $A$ and $B$ themselves.
\end{lemma}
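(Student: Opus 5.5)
First I check that $A$ and $B$ are indeed disjoint minimal vertex covers. They are disjoint by hypothesis. $A$ is a vertex cover, since $B$ is independent: every edge has at most one endpoint in $B$, hence at least one in $A$; the same argument works for $B$.

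For minimality of $A$, take $a\in A$. Since $G$ is connected, and we may assume $G$ has at least one edge (otherwise there is nothing to cover), $a$ has a neighbour $b$. Because $A$ is independent, $b\in B$. Then the edge $\{a,b\}$ meets $A\setminus\{a\}$ only if $b\in A$, which is impossible. So $A\setminus\{a\}$ is not a cover, and $A$ is minimal. The same argument shows $B$ is minimal.

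For the uniqueness claim, let $C,D$ be disjoint vertex covers; I will not even need minimality at first. For every edge $\{x,y\}$, both $C$ and $D$ meet $\{x,y\}$ and $C\cap D=\emptyset$. So one endpoint lies in $C$ and the other in $D$, and in particular no endpoint lies in both. Hence every vertex incident to an edge lies in exactly one of $C,D$. Since $G$ is connected with at least one edge, every vertex is such a vertex, so $C\sqcup D=V(G)$. Along any edge the membership switches between $C$ and $D$. A connected bipartite graph has a unique bipartition up to swapping, obtained by $2$-colouring along a spanning tree from a base vertex. So $\{C,D\}=\{A,B\}$.

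There is no real obstacle; the only care needed is the degenerate case of a single vertex with no edges. If $G$ has one vertex and no edges, the only minimal vertex cover is $\emptyset$, so I will assume $G$ has at least one edge, or note that the statement is then interpreted trivially. The key step is the observation that disjoint covers must split every edge, which forces them to form a proper $2$-colouring.
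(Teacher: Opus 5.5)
Your proof is correct and follows the same outline as the paper's: show $A$ and $B$ are minimal covers because a connected graph with an edge has no isolated vertices, then show that two disjoint covers must partition $V(G)$ into two independent sets, and conclude by uniqueness of the bipartition (neither argument uses minimality of the second pair of covers). The only difference is that the paper proves this uniqueness by hand, splitting $V(G)$ into the four nonempty blocks $C_1\cap A$, $C_1\cap B$, $C_2\cap A$, $C_2\cap B$ and showing $G$ would be disconnected, whereas you cite the standard parity/$2$-colouring fact.
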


\begin{proof}
    We first argue that $A$ and $B$ are minimal vertex covers of $G$. Since they are both independent sets, and they are complements to each other, they are vertex covers. On the other hand, for each vertex $x\in A$, there must be a vertex $y\in B$ such that $\{x,y\}$ is an edge of $G$, as otherwise $x$ would be an isolated vertex. Thus, no proper subset of $A$ is a vertex cover of $G$, and the same goes for $B$.

    Now let $C_1$ and $C_2$ be two disjoint minimal vertex covers of $G$. First, assume by way of contradiction that $V(G)\setminus (C_1\sqcup C_2)\neq \emptyset$, and let $y\in V(G)\setminus (C_1\sqcup C_2)$. Since $G$ is connected, $y$ must have a neighbor $x$. Since both $C_1$ and $C_2$ are vertex covers, and $y \notin C_1 \sqcup C_2$, we conclude that $x \in C_1 \cap C_2$, and this contradicts our assumption that $C_1$ and $C_2$ are disjoint. Now we can assume that $C_2=V(G)\setminus C_1$. In particular, both $C_1$ and $C_2$ are independent sets. If $C_1$ is either $A$ or $B$, then the result follows.  Now assume by contradiction that $C_1$ is neither. This implies that $C_1\cap A$ and $C_1\cap B$ are both non-empty, and their disjoint union is $C_1$ itself. Thus
    \[
    V(G)= (C_1\cap A) \sqcup (C_1\cap B) \sqcup (C_2\cap A) \sqcup (C_2\cap B),
    \]
    where these four sets are all non-empty. Observe that no vertex in $C_1\cap A$ forms an edge with a vertex in $(C_1\cap B) \sqcup (C_2\cap A)$, as $C_1$ and $A$ are both independent sets. Similarly,  no vertex in $C_1\cap B$ forms an edge with a vertex in $(C_1\cap A) \sqcup (C_2\cap B)$. Therefore, we have
    \[
    E(G) = \{(x,y)\colon x\in C_1\cap A, y\in C_2\cap B\}\ \bigsqcup \ \{(x,y)\colon x\in C_2\cap A, y\in C_1\cap B\}.
    \]
    In particular, $G$ is not connected, and this gives the desired contradiction.
\end{proof}

\begin{proposition}\label{prop:J(G)-Golod}
    If $G$ is a connected bipartite graph that is not a complete bipartite graph, then $J(G)$ is Golod.
\end{proposition}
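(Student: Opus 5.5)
The plan is to combine the multidegree observations of Remark~\ref{rem_lcm_Koszul} with Lemma~\ref{lem:A-and-B-are-disjoint-MVC}. Since $J(G)$ is squarefree, I fix a multigraded $\Bbbk$-basis of $H_{\geq 1}(K^R)$, where $R=Q/J(G)$. By Golod's criterion and \cite[Proposition 2.3]{May69}, it suffices to show that for every $t\geq 2$, every Massey product $\mu_t(h_1,\dots,h_t)$ of basis elements contains only zero.

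\emph{Step 1: the supports of the $h_i$.} By Remark~\ref{rem_lcm_Koszul}, a Massey product $h\in\mu_t(h_1,\dots,h_t)$ has multidegree $\sum_i \bdeg(h_i)$. If $h\neq 0$, this multidegree lies in the lcm lattice of $J(G)$, so it is squarefree. Hence the supports $\supp(h_i)$ are pairwise disjoint. Each $\bdeg(h_i)$ is itself in the lcm lattice, so $\supp(h_i)$ is a union of minimal vertex covers and contains some minimal vertex cover $C_i$.

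If $t\geq 2$, the covers $C_1,C_2$ are disjoint minimal vertex covers. Lemma~\ref{lem:A-and-B-are-disjoint-MVC} then gives $\{C_1,C_2\}=\{A,B\}$. Since $A\sqcup B=V(G)$ and the supports are disjoint, we get $\{\supp(h_1),\supp(h_2)\}=\{A,B\}$. A third element $h_3$ would need support disjoint from $V(G)$, which is impossible. Therefore all $t$-ary Massey products with $t\geq 3$ vanish for degree reasons. For $t=2$, the only possibly nonzero products are products of basis elements in multidegrees $x_A$ and $x_B$.

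\emph{Step 2: identify the homology in multidegrees $x_A$ and $x_B$.} I would compute in $\mathbb{T}_\bullet\otimes\Bbbk$ via (\ref{eq Taylor}). The only minimal generator dividing $x_A$ is $x_A$ itself, since any minimal cover contained in $A$ equals $A$ by minimality. So the only Taylor basis element of multidegree $x_A$ is $e_{\{x_A\}}$, and the homology in this multidegree is spanned by its class. The same holds for $B$.

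Their product is $e_{\{x_A\}}e_{\{x_B\}}=\pm e_{\{x_A,x_B\}}$, with coefficient $1$ because $x_Ax_B=\lcm(x_A,x_B)$. It remains to show that $e_{\{x_A,x_B\}}$ is a boundary in $\mathbb{T}_\bullet\otimes\Bbbk$.

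\emph{Step 3 (main step): use non-completeness.} Since $G$ is not complete bipartite, there exist $a\in A$ and $b\in B$ that are not adjacent. Then $\{a,b\}$ is independent, so $V(G)\setminus\{a,b\}$ is a vertex cover and contains a minimal vertex cover $C$. This $C$ differs from $A$ and $B$, because $a\notin C$ and $b\notin C$.

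I would then compute $\partial(e_{\{x_A,x_C,x_B\}})$ in $\mathbb{T}_\bullet\otimes\Bbbk$, looking at each of the three faces.
\begin{itemize}
\item The face $e_{\{x_A,x_B\}}$ has coefficient $\lcm(x_A,x_B,x_C)/\lcm(x_A,x_B)=1$, since $C\subseteq V(G)=A\cup B$.
\item The face $e_{\{x_C,x_B\}}$ has coefficient divisible by $a$, because $a\notin C\cup B$; it therefore vanishes modulo $\m$.
\item The face $e_{\{x_A,x_C\}}$ has coefficient divisible by $b$, because $b\notin A\cup C$; it also vanishes modulo $\m$.
\end{itemize}
Hence $\partial(e_{\{x_A,x_C,x_B\}})=\pm e_{\{x_A,x_B\}}$, so the product of the two classes is zero in homology.

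Combining the three steps, all binary products vanish on the basis, so $R$ satisfies $(B_2)$. All higher Massey products are then defined, and they vanish by Step 1. Therefore $J(G)$ is Golod.

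The delicate point I expect is Step 1: justifying that it is enough to check Massey products on a multigraded basis, and that the multidegree additivity in Remark~\ref{rem_lcm_Koszul} applies to every element of $\mu_t$. I would handle this by appealing to May's result that, once $(B_{t-1})$ holds, each $t$-ary Massey product is a single well-defined multigraded element, and it suffices to check vanishing on a fixed basis.
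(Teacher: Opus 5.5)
Your proposal is correct and follows essentially the same route as the paper. Like the paper, you use the multidegree constraints of Remark~\ref{rem_lcm_Koszul} together with Lemma~\ref{lem:A-and-B-are-disjoint-MVC} to reduce everything to the single product $[e_{\{x_A\}}\otimes 1][e_{\{x_B\}}\otimes 1]$, which a third minimal vertex cover then kills. The only differences are that you check this vanishing directly via $\partial(e_{\{x_A,x_C,x_B\}})$ instead of citing \cite[Lemma~2.4]{Kat17}, and that you spell out the $(B_t)$ bookkeeping for higher Massey products, which the paper leaves implicit.
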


\begin{proof}
    Since $G$ is bipartite, we can set $V(G)=A\sqcup B$ where $A$ and $B$ are independent sets. We claim that $G$ has at least three minimal vertex covers. Indeed, $A$ and $B$ are two minimal vertex covers of $G$ by \cref{lem:A-and-B-are-disjoint-MVC}. Since $G$ is not a complete bipartite graph, there exist $x\in A$ and $y\in B$ such that $\{x,y\}$ is not an edge of $G$. Then $N_G(x)\sqcup (A\setminus \{x\})$ contains neither $A$ nor $B$, and is a vertex cover of $G$. Thus there exists a third minimal vertex cover of $G$ rather than $A$ and $B$, as claimed.

    By \cref{lem:A-and-B-are-disjoint-MVC}, the only multidegrees in the lcm lattice of $J(G)$ with disjoint supports are $\prod_{x\in A}x$ and $\prod_{y\in B}y$, and the only possible such elements are the images, in the Koszul homology, of the corresponding minimal generators of $J(G)$. Therefore, the only possibly non-trivial Massey operation on $\Bbbk[V(G)]/J(G)$ is 
    \[
    \mu_2([e_{\{u\}} \otimes 1_{\Bbbk}],[e_{\{v\}} \otimes 1_{\Bbbk}]) = [(e_{\{u\}} \otimes 1_{\Bbbk}) \cdot (e_{\{u\}} \otimes 1_{\Bbbk})] = [(e_{\{u\}} \cdot e_{\{v\}}) \otimes 1_{\Bbbk}],
    \]
    where $u=\prod_{x\in A}x$, $v= \prod_{y\in B}y$, and we used $\mathbb{T}_\bullet \otimes \Bbbk$ to compute the Massey operation (see the beginning of the current section for notation and terminology). This product, however, is trivial thanks to \cite[Lemma~2.4]{Kat17}, since our assumptions guarantee that there is a third minimal generator of $J(G)$ which necessarily divides $uv$, given that $V(G) = A \sqcup B$. Since all Massey products on $\Bbbk[V(G)]/J(G)$ vanish, it is Golod.
\end{proof}

Our main result then follows easily.

\begin{theorem}\label{thm:J(G)-Golod-or-CI}
    Let $G$ be a connected graph. The following conditions are equivalent:
    \begin{enumerate}[label=(\roman*)]
        \item $J(G)$ is not Golod;
        \item $J(G)$ is a complete intersection defined by two monomials of degree at least two;
        \item $G=K_{m,n}$ with $\min\{m,n\} \geq 2$.
    \end{enumerate}
\end{theorem}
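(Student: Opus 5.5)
We prove the cycle of implications (iii)$\Rightarrow$(ii)$\Rightarrow$(i)$\Rightarrow$(iii). The first two are direct computations, and the last is handled by splitting on whether $G$ is bipartite.

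For (iii)$\Rightarrow$(ii), write $V(G)=A\sqcup B$ with $|A|=m$ and $|B|=n$. Since every vertex of $A$ is adjacent to every vertex of $B$, a vertex cover must contain all of $A$ or all of $B$. Indeed, if $x\in A$ and $y\in B$ were both omitted, the edge $\{x,y\}$ would be uncovered. Hence the minimal vertex covers are exactly $A$ and $B$, and
\[
J(G)=\Bigl(\prod_{x\in A}x,\ \prod_{y\in B}y\Bigr).
\]
These two monomials have disjoint supports, so they form a regular sequence. Their degrees are $m,n\geq 2$.

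For (ii)$\Rightarrow$(i), observe that $Q/J(G)$ is then a complete intersection of codimension $2$. It is therefore Gorenstein and not a hypersurface. By the facts recalled in the introduction, a Gorenstein ring is Golod only if the defining ideal is principal, so $Q/J(G)$ is not Golod. One point needs care: the ideal must lie in $\n^2$, which holds because both degrees are at least $2$. If some variable of $Q$ does not appear in the generators, we simply adjoin it polynomially, which does not affect Golodness. Alternatively, one can exhibit the nonzero product $[e_{\{u\}}][e_{\{v\}}]$ directly in Koszul homology.

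For (i)$\Rightarrow$(iii), we argue by contraposition and assume $G$ is not of the form $K_{m,n}$ with $\min\{m,n\}\geq 2$. There are three cases.
\begin{enumerate}
\item If $G$ is not bipartite, it contains an odd cycle, and a shortest odd cycle is induced. Then \cref{prop:odd-cycle-Golod} gives that $J(G)$ is Golod.
\item If $G$ is bipartite but not complete bipartite, then \cref{prop:J(G)-Golod} applies, using the connectedness hypothesis.
\item If $G=K_{1,n}$, then $J(G)=(x,\prod_{y\in B}y)$, where $x$ is the center of the star and $B$ is the set of leaves. This is the remaining case.
\end{enumerate}
The only subtle step is case (3), since the generator $x$ is linear and the ideal is not contained in $\n^2$. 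Two routes are available. One can pass to $Q/(x)$, where the ideal becomes principal, and use that Golodness is unaffected by removing a variable that is itself a minimal generator. Alternatively, one can verify directly that $Q/J(G)\cong \Bbbk[B]/(\prod_{y\in B}y)$ is a hypersurface, hence Golod. The degenerate case where $G$ is a single vertex or a single edge is handled the same way, or excluded by convention.
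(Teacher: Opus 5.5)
Your proof is correct and follows the paper's route. Complete bipartite graphs give the complete intersection $\bigl(\prod_{x\in A}x,\prod_{y\in B}y\bigr)$; otherwise Golodness comes from \cref{prop:odd-cycle-Golod} in the non-bipartite case or from \cref{prop:J(G)-Golod} in the bipartite, non-complete-bipartite case. You also make explicit two points the paper's short proof leaves implicit: a codimension-two complete intersection contained in $\mathfrak{n}^2$ is not Golod, and the star $K_{1,n}$ gives $Q/J(G)\cong \Bbbk[B]/(\prod_{y\in B}y)$, which is a hypersurface and hence Golod.
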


\begin{proof}
    If $G=A\sqcup B$, where $A,B$ are independent sets, is a complete bipartite graph, then $J(G)= \left(\prod_{x\in A}x, \prod_{y\in B}y\right)$  is a complete intersection. Otherwise, $J(G)$ is Golod due to \cref{prop:odd-cycle-Golod} and \cref{prop:J(G)-Golod}, as desired.
\end{proof}


This result can be extended to non-connected graphs. 

\begin{theorem}\label{thm:J(G)-Golod-or-CI-disconnected}
    Let $G$ be a graph. 
    The following conditions are equivalent:
    \begin{enumerate}[label=(\roman*)]
        \item $J(G)$ is not Golod;
        \item $J(G)$ is a complete intersection defined by two monomials of degree at least two;
        \item $G = K_{m,n} \sqcup G'$, where $\min\{m,n\} \geq 2$ and $G'$ consists of isolated vertices.
    \end{enumerate}
\end{theorem}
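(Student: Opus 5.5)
Isolated vertices do not change $J(G)$: they lie in no edge, so they belong to no minimal vertex cover and never appear in the generators. So we may discard them and assume $G$ has no isolated vertices. Write the remaining graph as a disjoint union of connected components $G_1,\dots,G_r$, each with at least one edge. The cover ideal factors as
\[
J(G)=\bigcap_{\{x,y\}\in E(G)}(x,y)=J(G_1)\cdots J(G_r)=J(G_1)\cap\dots\cap J(G_r).
\]
This holds because the $J(G_i)$ live in disjoint sets of variables, and minimal vertex covers of $G$ are exactly unions of minimal vertex covers of the components. If $r=1$, the statement is Theorem~\ref{thm:J(G)-Golod-or-CI}. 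The work is therefore to show that $r\ge 2$ always gives a Golod ring, and that in this case neither (ii) nor (iii) can hold.

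For (iii)$\Rightarrow$(ii) we compute directly. With $G=K_{m,n}\sqcup G'$, the ideal $J(G)=J(K_{m,n})$ is generated by the two products over the parts of $K_{m,n}$. These are coprime monomials of degrees $m,n\ge 2$, so $J(G)$ is a complete intersection of the required form.

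For (ii)$\Rightarrow$(i): a complete intersection of two forms of degree at least $2$ is a non-hypersurface complete intersection, hence not Golod. (Its Koszul homology has nonzero product $H_1\cdot H_1=H_2$.)

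For (i)$\Rightarrow$(iii) we argue by contrapositive. Suppose $G$, after removing isolated vertices, is not $K_{m,n}$ with $\min\{m,n\}\ge 2$. If $r=1$, Theorem~\ref{thm:J(G)-Golod-or-CI} gives that $J(G)$ is Golod. If $r\ge 2$, then $J(G)=J_1J_2$ with $J_1=J(G_1)$ and $J_2=J(G_2)\cdots J(G_r)$, where $J_1$ and $J_2$ are nonzero proper ideals in disjoint sets of variables. The key fact is that a product of two proper ideals in disjoint variables is Golod. I would derive this in one of two ways.

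The first route cites the known result (Herzog--Steurich type) that $IJ$ is Golod whenever $I\cap J=IJ$, which certainly holds for ideals in disjoint variables.

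The second route uses the multidegree technique of Remark~\ref{rem_lcm_Koszul}, and this is the step I expect to be the main obstacle. Every minimal generator of $J(G)$ has the form $uv$, where $u$ is a minimal generator of $J_1$ and $v$ one of $J_2$. Hence every element of the lcm lattice involves variables from both $G_1$ and $G_2$. In particular, every multidegree of $H_{\ge1}(K^R)$ has support meeting the variables of $G_1$, and any two such supports intersect. By Remark~\ref{rem_lcm_Koszul}, a Massey product of multigraded classes has multidegree equal to the sum of their multidegrees. Because $J(G)$ is squarefree and the supports overlap, this sum is not squarefree, so it lies outside the lcm lattice and the Massey product vanishes. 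This is the same argument as in Proposition~\ref{prop:odd-cycle-Golod}.

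So in all cases outside (iii) the ideal $J(G)$ is Golod, which completes the equivalence.
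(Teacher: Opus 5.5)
Your overall structure is sound: discard isolated vertices, use Theorem~\ref{thm:J(G)-Golod-or-CI} when $G$ is connected, and show that disconnected $G$ gives a Golod ring. Your \emph{first} route for the disconnected case is correct and genuinely different from the paper. You apply the Herzog--Steurich theorem ($I\cap J=IJ$ implies $IJ$ Golod) to $J(G_1)$ and $J(G_2)\cdots J(G_r)$. These satisfy $I\cap J=IJ$ because they are monomial ideals in disjoint sets of variables.

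The paper instead stays inside its multidegree technique, in two cases:
\begin{enumerate}
\item Graphs with an induced odd cycle are handled by Proposition~\ref{prop:odd-cycle-Golod}.
\item In the bipartite case, Lemma~\ref{lem:A-and-B-are-disjoint-MVC} shows that the only pairs in the lcm lattice with disjoint supports are $u=\prod_i\prod_{x\in C_i}x$ (with $C_i\in\{A_i,B_i\}$) and $v=\prod_{x\in V(G)}x/u$. The class $[e_u][e_v]$ is then killed by \cite[Lemma~2.4]{Kat17}, because at least two further generators divide $uv$.
\end{enumerate}
Your route is shorter and needs no odd-cycle/bipartite split. It also proves more: any product of proper ideals in disjoint variables is Golod. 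The cost is importing a deeper external theorem. The paper's argument is self-contained apart from a lemma on Taylor products, and it illustrates the section's technique.

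Your \emph{second} route does not work and should be dropped. The step ``every support meets the variables of $G_1$, hence any two supports intersect'' is a non sequitur. Take $G=2K_2$ with edges $\{a,b\},\{c,d\}$. Then $J(G)=(ac,ad,bc,bd)$, and $ac$, $bd$ have disjoint supports, with $\lcm(ac,bd)=abcd$ in the lcm lattice. More generally, whenever all components are bipartite, the pairs $u,v$ above exist. So Remark~\ref{rem_lcm_Koszul} alone cannot force $[e_u][e_v]=0$. The argument of Proposition~\ref{prop:odd-cycle-Golod} works only because an odd cycle forces any two minimal covers to overlap, and that fails here.

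Closing this gap requires showing that $e_ue_v$ is actually a boundary. In the example, $\partial(e_{\{ac,bd,ad\}})\otimes 1=\pm e_{\{ac,bd\}}\otimes 1$ in $\mathbb{T}_\bullet\otimes\Bbbk$, since the other two coefficients are $b$ and $c$. This is exactly the extra input the paper supplies via \cite[Lemma~2.4]{Kat17}. As written, route one gives a complete proof and route two does not.
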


\begin{proof}
    Note that the cover ideal $J(G)$ is not affected by the presence of isolated vertices. Thus, we may assume that $G$ has no isolated vertices. Due to \cref{thm:J(G)-Golod-or-CI}, it suffices to show that if $G$ is not connected, then $J(G)$ is Golod. Let $G_1,\dots, G_t$, where $t\geq 2$, be the connected components of $G$. If $G$ contains an odd cycle as an induced subgraph, then the result follows from \cref{prop:odd-cycle-Golod}. Now we can assume that $G$ is bipartite, i.e., $G_i$ is connected bipartite for any $i\in [t]$. For each $i\in [t]$, set $V(G_i)=A_i\sqcup B_i$ where $A_i$ and $B_i$ are independent sets of $G_i$. We have  $J(G)=\prod_{i=1}^t J(G_i)$, and thus any minimal monomial generator of $J(G)$ is a product of those of $J(G_i)$, where $i$ ranges in $[t]$. By \cref{lem:A-and-B-are-disjoint-MVC}, if $u,v$ are monomials in the lcm lattice of $J(G)$ with disjoint supports, then $u=\prod_{i=1}^t\prod_{x\in C_i}x$, where $C_i\in \{A_i,B_i\}$, and $v= \left( \prod_{x\in V(G)}x\right)/u$. 
    As in \cref{prop:J(G)-Golod}, the only possible non-trivial Massey product on $\Bbbk[V(G)]/J(G)$ is
    \[
    \mu_2([e_{\{u\}} \otimes 1_{\Bbbk}],[e_{\{v\}} \otimes 1_{\Bbbk}]) = [(e_{\{u\}} \otimes 1_{\Bbbk}) \cdot (e_{\{u\}} \otimes 1_{\Bbbk})] = [(e_{\{u\}} \cdot e_{\{v\}}) \otimes 1_{\Bbbk}].
    \]
    However, $J(G)$ has at least two other minimal monomial generators, and both of them must divide $uv$ since $V(G)=\supp(u)\sqcup \supp(v)$. Thus, the Massey product must be trivial by \cite[Lemma~2.4]{Kat17}, and $\Bbbk[V(G)]/J(G)$ is Golod.
\end{proof}

Thanks to Remark \ref{rem:unmixed cover} we can give an equivalent formulation of Theorem~\ref{thm:J(G)-Golod-or-CI-disconnected}, which mirrors Scheja's result \cite{Scheja}:

\begin{corollary}
    If $I$ is an unmixed squarefree monomial ideal of height $2$, then $I$ is either Golod or a complete intersection.
\end{corollary}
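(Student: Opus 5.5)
The plan is to reduce the corollary directly to Theorem~\ref{thm:J(G)-Golod-or-CI-disconnected} through the identification in Remark~\ref{rem:unmixed cover}. Let $I \subseteq Q=\Bbbk[x_1,\ldots,x_n]$ be an unmixed squarefree monomial ideal of height $2$. Since $I$ is squarefree, it is the intersection of its minimal primes, which are monomial primes generated by variables. Unmixedness of height $2$ forces every associated prime to be of the form $(x_i,x_j)$ with $i\ne j$, so
\[
I=\bigcap_{\{i,j\}\in E}(x_i,x_j)
\]
for some set $E$ of two-element subsets of $[n]$.

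Next we let $G$ be the graph on the vertex set $[n]$ with edge set $E$, so that $I=J(G)$ by the description of cover ideals as intersections of the primes $(x,y)$ over edges. Variables not appearing in any edge are isolated vertices of $G$, and these do not affect $J(G)$. We then apply Theorem~\ref{thm:J(G)-Golod-or-CI-disconnected}. If $I=J(G)$ is not Golod, then condition (ii) of that theorem holds, so $I$ is a complete intersection generated by two monomials. Hence $I$ is either Golod or a complete intersection, which is the claim.

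The only step that needs any care is the first one: checking that an unmixed squarefree monomial ideal of height $2$ really has all of its minimal primes generated by exactly two distinct variables. This is the standard primary decomposition of squarefree monomial ideals, together with the fact that the height of a monomial prime equals the number of variables generating it, so I expect no genuine obstacle here. One edge case is $I=0$, but this is excluded because $I$ has height $2$. Height $2$ also guarantees $E\neq\emptyset$, so $G$ has at least one edge and the theorem applies to a nontrivial $J(G)$.
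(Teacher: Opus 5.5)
Your proposal is correct and follows the paper's own route. The paper deduces the corollary as a restatement of Theorem~\ref{thm:J(G)-Golod-or-CI-disconnected}, using Remark~\ref{rem:unmixed cover} to identify unmixed height-two squarefree monomial ideals with cover ideals; you do the same, and additionally spell out that identification via the primary decomposition $I=\bigcap_{\{i,j\}\in E}(x_i,x_j)$.
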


We now give two examples to show that the above result is optimal; both have been verified with the aid of Macaulay2 \cite{M2}.
\begin{itemize}
    \item $\mathbb{Q}[x_1,x_2,x_3,x_4,x_5]/(x_1x_3,x_1x_5,x_2x_4,x_2x_5,x_3x_4x_5)$ is neither Golod nor a complete intersection, and the underlying ideal is an  unmixed squarefree monomial ideal of height~3.
    \item $\mathbb{Q}[x_1,x_2,x_3,x_4,x_5]/(x_1x_3,x_1x_4,x_2x_5)$ is neither Golod nor a complete intersection, and the underlying ideal is a squarefree monomial ideal of height $2$ that is not unmixed.
\end{itemize}

We remark that the techniques used in the proof of Theorem~\ref{thm:J(G)-Golod-or-CI} can be applied to a different class of monomial ideals, where we can also guarantee that all Massey products to be trivial. We therefore state without proofs the following result.


\begin{theorem}
    Assume that $I$ is squarefree monomial ideal in $\Bbbk[x_1,\dots, x_{2n}]$ minimally generated by monomials of degree at least $n$. Then $I$ is either Golod or a complete intersection. Moreover, $I$ is a complete intersection if and only if $I=(m_1,m_2)$ where $\deg m_1=\deg m_2=n$ and $\supp(m_1)\cap \supp(m_2)=\emptyset$.
\end{theorem}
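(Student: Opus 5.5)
The plan is to follow the strategy of Theorem~\ref{thm:J(G)-Golod-or-CI}, relying on the multidegree considerations of Remark~\ref{rem_lcm_Koszul}. Let $I\subseteq \Bbbk[x_1,\dots,x_{2n}]$ be squarefree with every minimal generator of degree at least $n$. By Remark~\ref{rem_lcm_Koszul}, we may fix a multigraded basis of $H_{\geq 1}(K^R)$ whose multidegrees all lie in the lcm lattice of $I$. Each such multidegree is squarefree of degree at least $n$, since it is an lcm of generators of degree at least $n$. Any $t$-ary Massey product of basis elements has multidegree equal to the sum of the multidegrees of its inputs. If two of the inputs have overlapping supports, this sum is not squarefree, so it does not lie in the lcm lattice, and the product vanishes.

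Next we count supports. Nonzero Massey products with $t\ge 2$ require inputs with pairwise disjoint supports, each of size at least $n$, inside a set of $2n$ variables. Hence $t\ge 3$ is impossible, so $(B_t)$ reduces to checking binary products. A binary product can only be nonzero for two basis elements $h_1,h_2$ whose multidegrees $u,v$ have disjoint supports of size exactly $n$ with $uv=x_1\cdots x_{2n}$. Each of $u,v$ is then an lcm of generators of degree at least $n$, so it must be a single minimal generator of degree exactly $n$. The homology in that multidegree is one-dimensional, spanned by the class of $e_{\{u\}}\otimes 1_\Bbbk$ in $\mathbb{T}_\bullet\otimes\Bbbk$, and similarly for $v$. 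So the only possibly nonzero Massey operation is $[(e_{\{u\}}\cdot e_{\{v\}})\otimes 1_\Bbbk]$. Strictly, one should check that Golodness may be verified on this basis; the paper asserts this when recalling Golod's criterion.

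Now we split into cases.
\begin{itemize}
\item If $I$ has a third minimal generator $w$, then $w$ divides $uv=x_1\cdots x_{2n}$. By \cite[Lemma~2.4]{Kat17} this product is then trivial, exactly as in Proposition~\ref{prop:J(G)-Golod}.
\item If $\mingens(I)=\{u,v\}$ consists of two coprime monomials, then $I$ is a complete intersection. If both have degree at least two it is not Golod; if $n=1$ it is generated by variables, and we treat that degenerate case separately or exclude it by the convention $I\subseteq\n^2$.
\end{itemize}
Hence if no such pair $u,v$ exists, or a third generator exists, every Massey product vanishes and $I$ is Golod.

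For the ``moreover'' statement, suppose $I$ is a complete intersection, hence not Golod (height at least $2$, not principal). By the argument above, $I$ must contain a coprime pair $u,v$ of degree $n$ and no further generators, so $I=(u,v)$. The converse is clear.

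The main obstacle is making the reduction to the single binary product rigorous. This needs two facts: the homology in multidegree $u$ is exactly spanned by the class of $e_{\{u\}}$, since the Taylor complex in multidegree $u$ has only that basis element; and vanishing of the chosen basis products suffices for Golodness.
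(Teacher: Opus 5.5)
Your proposal is correct and is the argument the paper has in mind. The paper states this result without proof and points to the method of Theorem~\ref{thm:J(G)-Golod-or-CI}: support counting in the squarefree lcm lattice leaves only the binary product $[e_{\{u\}}\otimes 1_\Bbbk]\cdot[e_{\{v\}}\otimes 1_\Bbbk]$ for complementary degree-$n$ generators, and \cite[Lemma~2.4]{Kat17} kills it whenever a third generator exists. Your tacit ``not principal'' in the \emph{moreover} part is a necessary reading rather than a gap in your argument: a principal $I=(m)$ is a (Golod) complete intersection not of the form $(m_1,m_2)$, so ``complete intersection'' there must mean non-hypersurface.
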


\section{Third technique: lcm-strongly $d$-Golod monomial ideals}

As before, $\Bbbk$ is a field and $Q=\Bbbk[x_1,\ldots,x_n]$ in its standard grading, with $\m = (x_1,\ldots,x_n)$. We also let $I \subseteq Q$ be a homogeneous ideal and set $R=Q/I$.

In the case that ${\rm char}(\Bbbk)=0$, Herzog used the derivations 
\[
\partial_{x_i}=\frac{d}{dx_i}:Q\to Q\quad i=1,\ldots,n
\]
to construct canonical cycle representatives in the Koszul complex $K^R$ for the Koszul homology $H_{*}(K^R)$ \cite{Herzog91}. Later, Herzog and Maleki introduced modified operators
\[
d^i:Q\to Q\quad i=1,\ldots,n
\]
which play a similar role in arbitrary characteristic; see \cite{HM18} for their definition. These operations are not derivations, but they satisfy an (unscaled) Euler formula $f=d^1(f)x_1+\cdots + d^n(f)x_n$, and this allowed them to construct canonical cycle representatives for the Koszul homology $H_{*}(K^R)$ independent of ${\rm char}(\Bbbk)$.

\begin{definition}
    When ${\rm char}(\Bbbk)=0$, the ideal $I\subseteq Q$ is called \emph{strongly Golod} if $\partial_{x_i}(I)\partial_{x_j}(J)\subseteq I$ for all $0\leq i,j\leq n$ \cite{Herzog91}.
    
    In general characteristic, the ideal $I\subseteq Q$ is called \emph{$d$-Golod} if $d^i(I)d^j(I)\subseteq I$ for all $0\leq i,j\leq n$ \cite{HM18}. The operations $d^i$ depend on the ordering of the variables of $Q$, and so the authors of  \cite{HM18} further define $I$ to be \emph{strongly $d$-Golod} if $\sigma(I)$ is $d$-Golod for any permutation $\sigma$ of the variables $\{x_1,\ldots, x_n\}$.
\end{definition}


We note the following result, which was stated to hold in characteristic $0$ in \cite{HM18}. Their proofs in fact hold in all characteristics.

\begin{proposition}
\label{prop_sqfree_not_strongly_Golod}
    A monomial ideal $I$ is strongly $d$-Golod (or strongly Golod, in characteristic zero) if and only if for any $u,v\in \mingens(I)$, and for any variables $x_i\mid u$ and $x_j\mid v$, we have $(uv)/(x_ix_j)\in I$. In particular, nonzero squarefree monomial ideals are never strongly $d$-Golod (or strongly Golod). 
\end{proposition}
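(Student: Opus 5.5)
The plan is to reduce both directions to concrete computations with the operators $d^i$ (or $\partial_{x_i}$ in characteristic zero) evaluated on monomials. For a monomial $u$, Herzog--Maleki's $d^i(u)$ is $u/x_i$ when $i$ is the largest (or smallest, depending on the chosen ordering) index with $x_i \mid u$, and $0$ otherwise. In characteristic zero, $\partial_{x_i}(u)$ is a nonzero scalar multiple of $u/x_i$ whenever $x_i\mid u$, up to possibly vanishing scalars, which are irrelevant in characteristic zero. The first step is to show that, since $I$ is a monomial ideal and both operators send monomials to scalar multiples of monomials, the condition $d^i(I)d^j(I)\subseteq I$ can be checked on monomials $f,g\in I$. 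Moreover, the ideal $I$ is generated as a $\Bbbk$-space by monomials, so the products $d^i(f)d^j(g)$ span $d^i(I)d^j(I)$ as an ideal-product up to multiplication by $Q$.

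Next I would reduce from arbitrary monomials in $I$ to minimal generators. Write $f=wu$ with $u\in\mingens(I)$ and $x_i\mid f$. There are two cases. If $x_i\mid u$, then $f/x_i=w\cdot(u/x_i)$. If $x_i\nmid u$, then $x_i\mid w$ and $f/x_i\in I$ already, so the product $(f/x_i)(g/x_j)$ lies in $I$ trivially. This shows that the condition on pairs $(u,v)\in\mingens(I)^2$ with $x_i\mid u$ and $x_j\mid v$ implies strong $d$-Golodness. For strong Golodness this uses $\partial_{x_i}(f)\in\Bbbk\cdot f/x_i$. For strong $d$-Golodness it uses that $d^i(f)$ is either $0$ or $f/x_i$, for any permutation of the variables.

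Conversely, suppose $I$ is strongly $d$-Golod and take $u,v\in\mingens(I)$ with $x_i\mid u$ and $x_j\mid v$. I would choose a permutation $\sigma$ that makes $x_i$ the distinguished variable for $u$ and $x_j$ the distinguished variable for $v$. The problem is that a single permutation may not do both simultaneously, for instance when $x_j\mid u$ as well. This is the main obstacle. My first attempt is to replace $v$ by a multiple that still lies in $I$. Alternatively, I would use $f=u$ and $g=v\cdot x_i^{N}$ suitably, so that the ordering putting $x_i$ first and $x_j$ second still picks $x_i$ for $u$ and $x_j$ for $g$. If $d^i$ selects the \emph{largest} variable present, I would order $x_i$ last, then $x_j$. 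Then $d(u)=u/x_i$ whenever $x_i \mid u$. For $v$, if $x_i\nmid v$ then $x_j$ is selected. If $x_i\mid v$, the symmetric roles let me use $d^i$ on both factors and deduce $(uv)/x_i^2\in I$, and then argue that this implies $(uv)/(x_ix_j)\in I$ since $x_i x_j \mid $ … This divisibility is where care is needed. I would instead verify it directly from the definition of $d^i$ in \cite{HM18}, which iteratively strips variables. In characteristic zero none of this is needed, since $\partial_{x_i}(u)\partial_{x_j}(v)$ is a nonzero multiple of $uv/(x_ix_j)$.

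The squarefree consequence is then immediate. Take $u\in\mingens(I)$ squarefree with $u\ne 1$, pick $x_i\mid u$, and set $v=u$ with $j=i$. Then $u^2/x_i^2=(u/x_i)^2$ is a monomial whose squarefree part $u/x_i$ is not divisible by any minimal generator. Indeed, every squarefree monomial ideal is radical, so $(u/x_i)^2\in I$ forces $u/x_i\in I$, which contradicts the minimality of $u$.
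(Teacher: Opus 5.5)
Your overall plan matches the paper's. The ``if'' direction reduces arbitrary monomials of $I$ to minimal generators, which you carry out more carefully than the paper, since it only says this follows from the definition of $d^i$. The ``only if'' direction uses a permutation putting $x_i$ and $x_j$ at the front of the variable order. The squarefree consequence uses radicality. Your characteristic-zero remark and the case $i=j$ are also fine.

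\textbf{The gap.} In the ``only if'' direction you never settle the case $i\neq j$ with $x_i\mid v$, and none of your suggested fixes closes it.
\begin{enumerate}
\item Your diagnosis is slightly off. Once $x_i$ is placed so that it is the selected variable whenever it divides a monomial (first in the smallest-index convention the paper uses), $x_j\mid u$ is harmless, because $x_i$ is still selected for $u$. The only obstruction is $x_i\mid v$.
\item Replacing $v$ by $g=v\,x_i^N$ with $N\geq 1$ makes $x_i$ the selected variable of $g$, which is exactly what you need to avoid.
\item You cannot get $(uv)/(x_ix_j)\in I$ from $(uv)/x_i^2\in I$ by divisibility. The quotient of these two monomials is $x_i/x_j$, which is not a monomial.
\item Appealing to how $d^i$ ``iteratively strips variables'' is not an argument.
\end{enumerate}

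\textbf{The missing step.} This case is elementary and does not use the $d$-Golod hypothesis at all. If $i\neq j$ and $x_i\mid v$, then, since also $x_j\mid v$, we have $x_ix_j\mid v$. Hence
\[
\frac{uv}{x_ix_j}=u\cdot\frac{v}{x_ix_j}\in I
\]
simply because $u\in I$. In the remaining case $x_i\nmid v$, your permutation does select $x_i$ for $\sigma(u)$ and $x_j$ for $\sigma(v)$, and $d$-Golodness of $\sigma(I)$ gives the claim. With this line inserted, your argument is complete and coincides with the paper's.
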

\begin{proof}
Recall from \cite{HM18} that if $u$ is a monomial and $r$ the smallest integer such that $x_r\mid u$, then $d^r(u)=u/x_r$, and $d^i(u)=0$ for any $i\neq r$.

    Suppose that $I$ is $d$-Golod. Let $u,v$ be any two monomials in $\mingens(I)$ with $x_i\mid u$ and $x_j\mid v$.  
    If $i=j$ we can choose a permutation $\sigma$ such that $\sigma(x_i)=\sigma(x_j)=x_1$.     Then $d^1\sigma(u)=\sigma(u)/x_1$ and $d^1\sigma(v)=\sigma(v)/x_1$. Since $\sigma(I)$  is $d$-Golod we get $(\sigma(u)/x_1)(\sigma(v)/x_1)=\sigma\big(uv/x_ix_j\big)\in \sigma(I)$, or equivalently, $(uv)/(x_ix_j)\in I$. Likewise if $i\neq j$ we choose $\sigma$ such that $\sigma(x_i)=x_1$ and $\sigma(x_j)=x_2$.     If $x_i|v$ then clearly $(uv)/(x_ix_j)\in I$, so we can assume $x_i\nmid v$, or equivalently $x_1\nmid \sigma(v)$. This means that $d^1\sigma(u)=\sigma(u)/x_1$ and $d^2\sigma(v)=\sigma(v)/x_2$. Since $\sigma(I)$ is $d$-Golod, we get $(\sigma(u)/x_1)(\sigma(v)/x_2)=\sigma\big(uv/x_ix_j\big)\in \sigma(I)$, or equivalently, $(uv)/(x_ix_j)\in I$.
    
    The converse implication follows directly from the definition of the $d^i$.

    For the final assertion, let $m\in \mingens(I)$. Then for any $x\mid m$, we have $(m/x)^2\in  I$ if and only if $m/x\in I$, as $I$ is a squarefree monomial ideal. By the definition of $m$, we have $(m/x)^2\notin I$, and thus $I$ is not $d$-Golod.

    The proof in characteristic zero is similar (and in fact slightly simpler).
\end{proof}

Since squarefree monomial ideals are essentially never strongly Golod, Herzog and Huneke introduced \emph{squarefree strongly Golod} ideals in \cite{HH13}: these are the ideals $I$ generated by squarefree monomials and satisfying $(\partial_{x_i}(I)\partial_{x_j}(I))^{\rm sqf} \subseteq I$ for all $i,j$. Their argument that squarefree strongly Golod ideals are Golod is flawed because it relies on \cite[Theorem 3]{BJ07}, which is known not to hold in general. We give an alternative proof of this fact:




\begin{theorem}\label{th_sqfree_strongly_golod}
    Let $I$ be a square-free monomial ideal in $Q$ such that $(\partial_{x_i}(I)\partial_{x_j}(I))^{\rm sqf} \subseteq I$ for all $i\neq j$ (in particular, this holds if $I$ is squarefree strongly Golod). Then $I$ is Golod.
\end{theorem}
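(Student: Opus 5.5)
The plan is to pass from $K^R$ to a quotient dg-algebra that only sees squarefree multidegrees, and then, following the strategy of \cite{HH13}, find there a set of cycles representing a $\Bbbk$-basis of the Koszul homology whose pairwise products are exactly zero. By Golod's criterion this shows that all Massey products vanish, so $R=Q/I$ is Golod.

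\textbf{Step 1: truncation to squarefree multidegrees.} Let $N\subseteq K^R$ be the $\Bbbk$-span of all multihomogeneous elements whose multidegree is not squarefree, i.e.\ has some coordinate at least $2$.
\begin{itemize}
\item[(a)] $N$ is a dg-ideal. The differential preserves multidegree. Multidegrees add under multiplication, so a non-squarefree multidegree stays non-squarefree after multiplying by any multihomogeneous element.
\item[(b)] As a complex, $N$ is the direct sum of the multigraded strands of $K^R$ in non-squarefree degrees.
\item[(c)] $N$ is acyclic. By Remark~\ref{rem_lcm_Koszul}(1), $H_*(K^R)$ lives only in multidegrees of the lcm lattice of $I$, and these are squarefree because $I$ is squarefree.
\end{itemize}
Hence $K^R\to K^R/N$ is a quasi-isomorphism of dg-algebras. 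Massey products are invariant under dg-algebra quasi-isomorphisms, so it suffices to show that all Massey products vanish on $H_*(K^R/N)$.

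\textbf{Step 2: explicit cycles.} For a squarefree monomial $u$ and $x_i\mid u$ we have $\partial_{x_i}(u)=u/x_i$. This removes any dependence on characteristic, so Herzog's construction from \cite{Herzog91} can be run with $u/x_i$ in place of derivatives. Concretely, a Tor class in multidegree $\mathbf{a}$ coming from the Taylor complex via \eqref{eq Taylor} lifts to a Koszul cycle of the form
\[
z=\sum_{i_1<\cdots<i_k} c_{i_1\cdots i_k}\, e_{i_1}\wedge\cdots\wedge e_{i_k}.
\]
Each coefficient $c_{i_1\cdots i_k}$ is a $\Bbbk$-combination of monomials of the form $\partial_{x_{i_1}}(f)$ with $f\in I$, times further monomials. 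We choose such lifts for a multigraded $\Bbbk$-basis of $\Tor^Q_*(R,\Bbbk)$, all in squarefree multidegrees, and let $\mathcal S$ be their images in $K^R/N$. These still represent a basis of $H_*(K^R/N)$ by Step 1.

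\textbf{Step 3: products vanish.} Take $z,z'\in\mathcal S$. Every term of $zz'$ has a coefficient of the form
\[
\partial_{x_i}(f)\,\partial_{x_j}(g)\cdot(\text{monomial}),\qquad f,g\in I,
\]
with $i\neq j$, since a term with $i=j$ would contain $e_i\wedge e_i=0$.
\begin{itemize}
\item If the full multidegree of the term (coefficient together with its $e$'s) is not squarefree, the term is zero in $K^R/N$.
\item Otherwise the coefficient is squarefree, so it equals its squarefree part. By hypothesis it then lies in $(\partial_{x_i}(I)\partial_{x_j}(I))^{\rm sqf}\subseteq I$, and it is zero in $R$.
\end{itemize}
Thus $zz'=0$ on the nose in $K^R/N$. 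The standard argument of \cite{HH13} then applies: one may take all defining systems for Massey products to consist of elements of $\mathcal S$ and zeros, so every Massey product is $0$.

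\textbf{Main obstacle.} The delicate step is Step 2. One must check that, in the squarefree setting and in arbitrary characteristic, Herzog's lifting really produces cycles representing a basis of $H_*(K^R)$. One must also check that every coefficient genuinely factors through some $\partial_{x_i}(f)$ with $f\in I$, with the correct index $i$ matching the $e_i$ that appears in the same term. I would verify this directly by comparing Herzog's formula with the comparison map $\mathbb T_\bullet\otimes K^Q\to K^R$ from \eqref{eq Taylor}, working one multidegree at a time. If that fails, the fallback is to deduce the statement from Theorem~\ref{THMX lcm}, since a squarefree ideal satisfying the hypothesis is easily seen to be lcm-strongly Golod.
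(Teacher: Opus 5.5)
Your proposal is correct and is essentially the paper's proof: your quotient $K^R/N$ is exactly the reduced Koszul complex $K^R_{\rm red}=K^R/(x_i^2,x_ie_i)$, and the paper likewise takes Herzog's canonical cycles in the ideal generated by the $\partial_{x_i}(I)e_i$, uses the hypothesis to see that their pairwise products vanish in the quotient, and concludes via a trivial Massey operation. One caveat on your Step 2: writing $u/x_i$ in place of $\partial_{x_i}(u)$ does not make Herzog's construction characteristic-free, because it still divides by degrees through the Euler identity $\sum_i x_i\,\partial_{x_i}(u)=\deg(u)\,u$; in positive characteristic you should instead use the Herzog--Maleki operators $d^i$ from \cite{HM18}, as in the proof of Theorem~\ref{th_lcm_str}, which is essentially your fallback.
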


This proof relies on understanding products in the \emph{reduced Koszul complex}, which has been used in toric topology to model the cohomology of moment angle complexes \cite[Construction 3.2.5]{Toric}. If $I\subseteq Q=k[x_1,\ldots,x_n]$ is a squarefree monomial ideal and $R=Q/I$, recall that $K^R=\Lambda^R(e_1,\ldots,e_n)$ with $\partial(e_i)=x_i$. The reduced Koszul complex is the quotient
\[
K_{\rm red}^R=K^R/J\quad \text{where}\quad J=(x_i^2,x_ie_i~|~ i=1,\ldots,n),
\]
in other words, $J$ is the ideal of non-squarefree elements in $K^R$. By \cite[Lemma 3.2.6]{Toric} the reduced Koszul complex is a well-defined dg algebra and the quotient map $K^R\to K_{\rm red}^R$ is a quasi-isomorphism of dg alebras. 

\begin{proof}[Proof of \cref{th_sqfree_strongly_golod}]
By \cite{Herzog91} there is a basis for the Koszul homology $H_{>0}(K^R)$ consisting of cycles $\{c_t\}_t$ in $K^R$ that lie in the ideal generated elements of $\partial_{x_i}(I)e_i$ for $i$. By \cite[Lemma 3.2.6]{Toric} the images $\{\overline{c}_t\}_t$ in $K_{\rm red}^R$ also yield a basis for the homology $H_{>0}(K_{\rm red}^R)=H_{>0}(K^R)$. By definition, $\partial_{x_i}(I)\partial_{x_j}(I)e_ie_j \subseteq (I, x_1^2,\ldots ,x_n^2)= 0$ in $K_{\rm red}^R$. Hence $\overline{c}_s\overline{c}_t=0$ for all $s$ and $t$. Since $H_{>0}(K_{\rm red}^R)$ has basis represented by cycles with pairwise zero products, it follows that $K_{\rm red}^R$ admits a trivial Massey operation, and therefore $R$ is Golod by \cite{GolodMassey}.
\end{proof}

Herzog and Huneke used strongly Golod ideals to show that all ordinary and symbolic powers of homogeneous ideals are Golod \cite{HH13}. We obtain an analog for squarefree and squarefree symbolic powers of squarefree monomial ideals. We recall that for a squarefree monomial ideal $I$ and an integer $k\geq 1$, the $k$-th \emph{squarefree power} and \emph{squarefree symbolic power} of $I$ is defined to be
\[
I^{[k]}\coloneqq \left( I^k\right)^{\rm sqf} \text{ and } I^{\{k\}}\coloneqq \left( I^{(k)}\right)^{\rm sqf},
\]
respectively. We then have the following.

\begin{theorem}
    For any squarefree monomial ideal $I$ and integer $k\geq 2$, the ideals $I^{[k]}$ and $I^{\{k\}}$ are lcm-strongly $d$-Golod, and thus Golod.
\end{theorem}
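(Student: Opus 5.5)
Since both ideals are squarefree, the plan is to verify the hypothesis of \cref{th_sqfree_strongly_golod}. We show that for all $i\neq j$ and all monomials $f,g\in I^{[k]}$ (respectively $I^{\{k\}}$), if $\partial_{x_i}(f)\partial_{x_j}(g)$ is a nonzero squarefree monomial, then it lies in the ideal. This is exactly the lcm-strongly $d$-Golod condition as well: every product that divides $\lcm$ of a squarefree ideal is squarefree, and for monomials the $d^i$ agree with $\partial_{x_i}$ up to units wherever the relevant products are nonzero. Golodness then follows from \cref{th_sqfree_strongly_golod}. Throughout we may assume $f,g$ are minimal generators, hence squarefree, since multiplying by monomials only enlarges the product.

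\emph{Squarefree powers.} Write $f=u_1\cdots u_k$ and $g=v_1\cdots v_k$, where the $u_a$ are minimal generators of $I$ with pairwise disjoint supports, and likewise for the $v_b$. Let $x_i\mid u_1$ and $x_j\mid v_1$. The product $w=(f/x_i)(g/x_j)$ is squarefree, so the supports of $f/x_i$ and $g/x_j$ are disjoint. Hence $u_2,\dots,u_k$ and $v_2,\dots,v_k$ are $2k-2$ generators of $I$ with pairwise disjoint supports, all dividing $w$. Since $k\ge 2$, we have $2k-2\ge k$, so $u_2\cdots u_k v_2\cdots v_k$ is squarefree and lies in $I^{2k-2}\subseteq I^k$. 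Therefore it lies in $I^{[k]}$, and it divides $w$, so $w\in I^{[k]}$.

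\emph{Squarefree symbolic powers.} Write $I=\bigcap_{P}P$ over its minimal monomial primes. A squarefree monomial $m$ lies in $I^{(k)}$ if and only if $|\supp(m)\cap P|\ge k$ for every minimal prime $P$. Fix $P$. Since $f\in I^{\{k\}}$, the set $\supp(f/x_i)\cap P$ has at least $k-1$ elements, and likewise $\supp(g/x_j)\cap P$ has at least $k-1$ elements. These two sets are disjoint because $w$ is squarefree. So $|\supp(w)\cap P|\ge 2k-2\ge k$. As $w$ is squarefree, it follows that $w\in I^{\{k\}}$.

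\emph{Expected difficulty.} The only delicate point is the bookkeeping in the reduction to \cref{th_sqfree_strongly_golod}. One must check that the squarefree-part condition $(\partial_{x_i}(I)\partial_{x_j}(I))^{\rm sqf}\subseteq I$ reduces to monomials $f,g$ and to the nonzero squarefree products considered above. This holds since $\partial_{x_i}$ of a monomial is a scalar multiple of a monomial, and non-squarefree terms are discarded. The combinatorial inequality $2k-2\ge k$ is what forces the hypothesis $k\ge2$.
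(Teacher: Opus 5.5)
Your proof is correct and is essentially the paper's argument: for $I^{[k]}$ you keep the $2k-2\ge k$ pairwise disjoint factors that do not contain $x_i$ or $x_j$, and for $I^{\{k\}}$ your count $|\supp(w)\cap P|\ge 2k-2\ge k$ is the same as the paper's reduction to $P^{[k]}$ for each minimal prime $P$. Using only the disjointness of $\supp(f/x_i)$ and $\supp(g/x_j)$ is slightly cleaner than the paper, which tacitly restricts to generators with disjoint supports; and because you work with $(f/x_i)(g/x_j)$ directly, your argument also verifies the $d^i$-condition, so Golodness follows from \cref{th_lcm_str} in any characteristic.
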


\begin{proof}
    By definition, $I^{[k]}$ is generated by monomials of the form $m_1\cdots m_k$ where $m_1,\dots,m_k\in \mingens(I)$ have pairwise disjoint supports. Consider two generators $m=m_1m_2\cdots m_k$ and $m'=m_1'm_2'\cdots m_k'$ of $I^{[k]}$ where all these $2k$ monomials have pairwise disjoint supports, and two variables $x\in \supp(m_1)$ and $x'\in \supp(m_1')$. It is clear from our description of $I^{[k]}$ above that $\frac{mm'}{xx'}$, a multiple of $(m_2\cdots m_k)(m_2'\cdots m_k')$, is in $I^{[k]}$. Therefore, $I^{[k]}$ is lcm-strongly $d$-Golod, as desired.

    Now we consider the ideal $I^{\{k\}}$. Let $I=P_1\cap \cdots \cap P_q$ be the unique prime decomposition of $I$. Then
    \[
    I^{\{k\}} = P_1^{[k]}\cap \cdots P_q^{[k]}.
    \]
    Now consider two generators $m$ and $m'$ of $I^{\{k\}}$ with disjoint support, and two variables $x\in \supp(m)$ and $x'\in \supp(m')$. Then by the same argument as in the proof for the squarefree powers, we have $\frac{mm'}{xx'}\in P_i^{[k]}$ for any $i\in [q]$, and thus it is also in $I^{\{k\}}$. Therefore, $I^{\{k\}}$ is lcm-strongly $d$-Golod, as desired.
\end{proof}


The proof of \cref{th_sqfree_strongly_golod} can be adapted to prove a stronger result covering a larger class of ideals. For a monomial ideal $I$, let $\lcm(I)$ denote the least common multiples of all minimal monomial generators of $I$.

\begin{definition}
    A monomial ideal $I$ is \emph{lcm-strongly $d$-Golod} if $\left(d^i(I)d^j(I) \right)^{\leq \lcm(I)} \subseteq I$  for any $i,j\in [n]$ where $i\neq j$. 

    When the characteristic of $\Bbbk$ is zero we do not need to use the theory of Herzog and Maleki. In this case (and only this case) we say that a monomial ideal $I$ is called \emph{lcm-strongly Golod} if $(\partial_{x_i}(I)\partial_{x_j}(I))^{\leq \lcm(I)} \subseteq I$ for each pair $i \neq j$.
\end{definition}


The lcm-strongly Golod condition is a slight generalisation of a notion introduced, under the same name, in \cite[4.4]{DESTEFANI2016}. There, the same condition $(\partial_{x_i}(I)\partial_{x_j}(I))^{\leq \lcm(I)} \subseteq I$ was required to hold for all pairs $i,j$. It was noted in \cite[Discussion after Theorem 4.6]{DESTEFANI2016} that for for understanding products in the Koszul complex it is sufficient to ask this when  $i\neq j$. For this reason we feel that it is safe to be slightly revisionist in using lcm-strongly Golod to refer to the more general notion.


\begin{theorem}\label{th_lcm_str}
    Let $I$ be a monomial ideal in $Q$. If $I$ is lcm-strongly $d$-Golod (or lcm-strongly Golod in characteristic zero) then $I$ is Golod.
\end{theorem}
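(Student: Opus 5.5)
We adapt the proof of \cref{th_sqfree_strongly_golod}, replacing the reduced Koszul complex by a version $K^{R}_{\rm lcm}$ of the Koszul complex truncated above $\lcm(I)$. Write $L=\lcm(I)=x_1^{a_1}\cdots x_n^{a_n}$. We let $J\subseteq K^R=\Lambda^R(e_1,\ldots,e_n)$ be the $\Bbbk$-span of all multigraded monomial elements $u\,e_{i_1}\cdots e_{i_k}$ whose multidegree $\bdeg(u)+\sum_s\bdeg(x_{i_s})$ does not divide $L$ (equivalently, some $x_i$-exponent exceeds $a_i$). We then set $K^R_{\rm lcm}=K^R/J$.

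\textbf{Step 1: $K^R_{\rm lcm}$ is a dg-algebra quasi-isomorphic to $K^R$.} This is the content of Lemma~\ref{lem_Koslcm}, which we may assume. It can be checked directly as follows. The set of multidegrees not dividing $L$ is an upward-closed set, so $J$ is an ideal. Since $\partial$ preserves multidegree, $J$ is a subcomplex and hence a dg-ideal. Moreover, $K^R$ splits as a direct sum of its multigraded strands, and $J$ is exactly the sum of the strands in multidegrees $\mathbf a\not\le \bdeg(L)$. By Remark~\ref{rem_lcm_Koszul}(1), $H(K^R)$ is concentrated in multidegrees of the lcm lattice, all of which divide $L$. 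Therefore $H(J)=0$, and the quotient map $K^R\to K^R_{\rm lcm}$ is a quasi-isomorphism of dg-algebras. Since Golodness is determined by Massey products, and these are invariant under dg-algebra quasi-isomorphism, it suffices to show that $K^R_{\rm lcm}$ admits a trivial Massey operation.

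\textbf{Step 2: canonical cycles.} By Herzog's construction \cite{Herzog91} in characteristic zero, or by Herzog--Maleki \cite{HM18} in general using the operators $d^i$, $H_{>0}(K^R)$ has a $\Bbbk$-basis of cycles $\{c_t\}$. Each $c_t$ lies in the ideal of $K^R$ generated by the elements $d^i(f)e_i$ with $f\in I$ (respectively $\partial_{x_i}(f)e_i$), and these cycles may be taken multihomogeneous since $I$ is monomial. Their images $\overline c_t$ form a basis of $H_{>0}(K^R_{\rm lcm})$ by Step 1.

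\textbf{Step 3: products vanish.} Any product $c_sc_t$ is a sum of terms of the form $w\,d^i(f)d^j(g)\,e_ie_j\,\omega$ with $f,g$ monomials of $I$. If $i=j$, the term is $0$ because $e_i^2=0$; this is why only $i\neq j$ is needed. For $i\ne j$, consider the monomial coefficient $m=w\,d^i(f)d^j(g)$ together with the exterior factors. If the total multidegree does not divide $L$, the term lies in $J$. Otherwise $d^i(f)d^j(g)$ divides $L$, so by the lcm-strongly $d$-Golod hypothesis, applied to $(d^i(I)d^j(I))^{\le L}$, we get $d^i(f)d^j(g)\in I$ and the coefficient is zero in $R$. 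In either case $\overline c_s\overline c_t=0$ in $K^R_{\rm lcm}$. The cycles $\overline c_t$ therefore give a trivial Massey operation, and $R$ is Golod by \cite{GolodMassey}, exactly as in \cref{th_sqfree_strongly_golod}.

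The main obstacle is Step 3's bookkeeping. One must check that "$d^i(f)d^j(g)$ divides $L$" follows from the full term's multidegree dividing $L$; this holds because the exterior factors $e_ie_j$ and $w$ only add to the multidegree. One must also check that $d^i(f)$ is a monomial multiple of $f/x_i$, so that it is a monomial and the hypothesis applies to monomials. A further care point is that the Herzog--Maleki cycles involve $d^i$ applied to generators and their images in $R$; one has to confirm the explicit form of their cycles gives only such products.
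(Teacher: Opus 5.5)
Your proposal is correct and follows essentially the same route as the paper. You pass to the lcm-truncated complex $K^R_{\rm lcm}=K^R/J$, which is quasi-isomorphic to $K^R$ because the discarded multigraded strands are acyclic by Remark~\ref{rem_lcm_Koszul}(1) (this is Lemma~\ref{lem_Koslcm}); you take the Herzog--Maleki cycles lying in the ideal generated by the $d^i(I)e_i$; and you kill their pairwise products using $d^i(I)d^j(I)e_ie_j\subseteq I+(m \mid \bdeg(m)\not\leq \lcm(I))$ for $i\neq j$, with your Step~3 bookkeeping simply unpacking that one-line containment.
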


To prove this we introduce the \emph{lcm-reduced Koszul complex}, defined to be the quotient $K_{\rm lcm}^R=K^R/J$ where 
\[
J= (K^R)^{\not\leq {\rm lcm}(I)}=\left(x\in K^R~|~ x \text{ is mulihomogeneous with }\bdeg(x)\not\leq {\rm lcm}(I) \right).
\]
This is analogous to the reduced Koszul complex used above, and the proof of the next lemma is similar to \cite[Lemma 3.2.6]{Toric}.

\begin{lemma}\label{lem_Koslcm}
For any monomial ring $R$ the quotient $K^R\to K_{\rm lcm}^R$ is a quasi-isomorphism of dg algebras.
\end{lemma}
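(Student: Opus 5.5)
The plan is to exploit the $\mathbb{Z}^n$-multigrading of $K^R$ and show that the quotient by $J$ changes nothing in homology. Write $L=\lcm(I)$. First, $J$ is a dg ideal. The multidegrees $\mathbf{a}$ with $\mathbf{a}\not\leq \bdeg(L)$ form an up-closed set in $\mathbb{N}^n$. Multiplying by any multihomogeneous element of $K^R$ only increases multidegree, so $J$ is an ideal. The differential $\partial(e_i)=x_i$ preserves multidegree, so $\partial(J)\subseteq J$. Hence $K^R_{\rm lcm}=K^R/J$ is a dg algebra and the quotient map is a map of dg algebras.

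Next, since $J=\bigoplus_{\mathbf{a}\not\leq \bdeg(L)}(K^R)_{\mathbf{a}}$ is a direct sum of multigraded strands of the complex, $K^R$ splits as a direct sum of complexes $K^R=\bigl(\bigoplus_{\mathbf{a}\leq \bdeg(L)}(K^R)_{\mathbf{a}}\bigr)\oplus J$. The quotient map is the projection onto the first summand, so it is a quasi-isomorphism if and only if $J$ is acyclic. That is, it suffices to show $H_*(K^R)_{\mathbf{a}}=0$ whenever $\mathbf{a}\not\leq\bdeg(L)$.

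For this I will invoke Remark~\ref{rem_lcm_Koszul}(1). Via the multigraded quasi-isomorphisms \eqref{eq Taylor}, one has $H_*(K^R)\cong H_*(\mathbb{T}_\bullet\otimes\Bbbk)$ as multigraded vector spaces. The complex $\mathbb{T}_\bullet\otimes \Bbbk$ is concentrated in multidegrees $\bdeg(u_J)$ with $J\subseteq\mingens(I)$, and every such $u_J$ divides $L$. So all nonzero Koszul homology lives in multidegrees $\leq\bdeg(L)$, which gives the acyclicity of $J$.

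The only point needing care is that the isomorphisms in \eqref{eq Taylor} really respect the $\mathbb{Z}^n$-grading, including the multidegrees of the $e_i$ in $K^Q$. This is standard, since every map there is multihomogeneous of degree zero. If one prefers to avoid the Taylor complex, there is a fallback: for $\mathbf{a}$ with $a_i>\deg_{x_i}L$ for some $i$, multiplication by $x_i$ is injective on $R$ in the relevant degrees, since no generator involves higher powers of $x_i$. A contracting homotopy on the strand, given by $e_i\wedge-$ divided by $x_i$, then kills homology there. I expect the Taylor argument to be the quicker route.
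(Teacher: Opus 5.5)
Your proposal is correct and follows the paper's proof: $J$ is a dg ideal because the set of multidegrees $\mathbf{a}\not\leq\bdeg(\lcm(I))$ is up-closed and $\partial$ is multihomogeneous, so $K^R$ splits as a direct sum of multigraded strands with $J$ the sum of those strands; these strands are acyclic by \cref{rem_lcm_Koszul}(1), via the Taylor complex. Your fallback contracting homotopy ($e_i\wedge-$ divided by $x_i$ on strands with $a_i>\deg_{x_i}\lcm(I)$) also works and gives a Taylor-free proof of that acyclicity, but the paper does not need it.
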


\begin{proof}
It follows directly from the definition that $J$ is an ideal of $K^R$ that is closed under the differential, so $K^R\to K_{\rm lcm}^R$ is a homomorphism of dg algebras. In fact, $K^R=\bigoplus_{m\in \mathbb{N}^n} (K^R)^m$ is the direct sum of its multihomogeneous subcomplexes, and the map $K^R\to K_{\rm lcm}^R$ simply quotients by those direct summands indexed by $m\not\leq{\rm lcm}(I)$. By (1) of \cref{rem_lcm_Koszul} these summands are acyclic, and therefore $K^R\to K_{\rm lcm}^R$ is a quasi-isomorphism.
\end{proof}

The proof of \cref{th_lcm_str} is now similar to that of \cref{th_sqfree_strongly_golod}.

\begin{proof}[Proof of \cref{th_lcm_str}.]
    By \cite{HM18} there is a basis for the Koszul homology $H_{>0}(K^R)$ consisting of cycles $\{c_t\}_t$ in $K^R$ that lie in the ideal generated elements of $d^i(I)e_i$ for $i$. By \cref{lem_Koslcm} the images $\{\overline{c}_t\}_t$ in $K_{\rm lcm}^R$ also yield a basis for the homology $H_{>0}(K_{\rm lcm}^R)$. By hypothesis $d^i(I)d^j(I)e_ie_j \subseteq I + (m\ \vert\  \bdeg(m)\not\leq {\rm lcm}(I))= 0$ in $K_{\rm lcm}^R$. Hence $\overline{c}_s\overline{c}_t=0$ for all $s$ and $t$. It follows that $K_{\rm lcm}^R$ admits a trivial Massey operation, and therefore $R$ is Golod by \cite{GolodMassey}. The statement in characteristic zero has a similar proof.
\end{proof}

\begin{remark}
Let $I$ be a monomial ideal and $\pol(I)$ be its polarization. We observe that the notions that $I$ is $\lcm$-strongly Golod and $\pol(I)$ is squarefree strongly Golod are unrelated. For example, $I=(x_1^3,x_1x_2,x_1x_3)$ is not $\lcm$-strongly Golod since $x_1^2=d_2(x_1x_2)d_3(x_1x_3)$ divides $\lcm(I) = x_1^3x_2x_3$, but does not belong to $I$, whereas $\pol(I)=(x_{1,1}x_{1,2}x_{1,3},x_{1,1}x_{2,1},x_{1,1}x_{3,1})$ is squarefree strongly Golod. On the other hand, $I=(x_1^2,x_1x_2,x_2^2)$ is $\lcm$-strongly Golod, but its polarization $\pol(I)=(x_{1,1}x_{1,2},x_{1,1}x_{2,1},x_{2,1}x_{2,2})$ is not squarefree strongly Golod as $d_{1,1}(x_{1,1}x_{1,2}) d_{2,1}(x_{2,1}x_{2,2}) = x_{1,2}x_{2,2}$ is squarefree, but it does not belong to $\pol(I)$.
\end{remark}

We end this section with a brief discussion of the hypothesis in \cref{th_sqfree_strongly_golod}. Since the term ``squarefree strongly Golod'' means something slightly different in \cite{HH13}, and since we are running out of reasonable adjectives to prepend, we use geometric terminology in the definition:

\begin{definition}
        Let $k$ be a field of any characteristic. A squarefree monomial ideal is called \emph{is called face-cut Golod} if $(\partial_i(I)\partial_j(I))^{\rm sqf} \subseteq I$ for each pair $i \ne j$.
\end{definition}


\begin{proposition}
    Let $\Delta$ be a simplicial complex on $n$ vertices, and let $I$ be the corresponding monomial ideal in $Q$. Then $I$ is face-cut Golod if and only if  for every face $F\in \Delta$ and any two distinct vertices $i, j$ of $\Delta$, if $F$ is a disjoint union $U\cup V$ then at least one of $U\cup\{i\}$ or $V\cup\{j\}$ is a face of $\Delta$.
\end{proposition}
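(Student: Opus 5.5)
We will translate the face-cut Golod condition, which is purely algebraic, into the combinatorics of $\Delta$. First, we identify $I$ with the Stanley--Reisner ideal $I_\Delta$, generated by the monomials $x_G=\prod_{k\in G}x_k$ for the non-faces $G\notin\Delta$. A squarefree monomial $x_W$ lies in $I$ if and only if $W\notin\Delta$. For a squarefree monomial ideal, $\partial_i(I)$ is spanned (up to nonzero scalars, which is irrelevant for monomial membership) by the monomials $x_G/x_i$ with $G\notin\Delta$ and $i\in G$, together with their multiples. So $\partial_i(I)\partial_j(I)$ is the monomial ideal generated by $(x_G/x_i)(x_H/x_j)$ with $G,H\notin\Delta$, $i\in G$, $j\in H$. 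Taking the squarefree part keeps exactly the generators of this form with $G\setminus\{i\}$ and $H\setminus\{j\}$ disjoint, together with squarefree multiples of them. Since $I$ is an ideal, it is enough to check the generators. The condition therefore reads: whenever $G,H\notin\Delta$, $i\in G$, $j\in H$, $i\neq j$, and $(G\setminus\{i\})\cap(H\setminus\{j\})=\emptyset$, we have $(G\setminus\{i\})\cup(H\setminus\{j\})\notin\Delta$.

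We then substitute $U=G\setminus\{i\}$ and $V=H\setminus\{j\}$. The condition becomes: for disjoint $U,V$ and distinct $i,j$ with $i\notin U$, $j\notin V$, $U\cup\{i\}\notin\Delta$ and $V\cup\{j\}\notin\Delta$, the set $U\cup V$ is not a face. Taking the contrapositive, this says: if $F=U\sqcup V\in\Delta$, then $U\cup\{i\}\in\Delta$ or $V\cup\{j\}\in\Delta$. This is exactly the stated criterion.

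The only care needed is with degenerate cases, which is the main thing to verify.
\begin{itemize}
\item If $i\in U$, then $U\cup\{i\}=U\subseteq F$ is a face, so the criterion holds trivially. The same applies if $j\in V$. On the algebraic side these cases do not arise, since $i\in G$ forces $i\notin G\setminus\{i\}$. So the two conditions agree here.
\item If $i\in V$ or $j\in U$, the sets $G=U\cup\{i\}$ and $H=V\cup\{j\}$ still have squarefree product $x_Ux_V$ after removing $i$ and $j$. The translation is unaffected, because only the disjointness of $U$ and $V$ matters.
\item We allow $U$ or $V$ to be empty. Then the generator $x_i$ of $I$ corresponds to the non-face $\{i\}$, and the algebra and the combinatorics still match.
\end{itemize}
Finally, we must confirm that the reduction to generators $x_G$ with $G$ a minimal non-face is harmless. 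Any non-face contains a minimal one, and non-faces are closed upward, so the translated condition quantified over all non-faces is equivalent to the same condition quantified over minimal ones, both for the hypothesis and for the conclusion.
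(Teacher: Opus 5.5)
Your proof is correct and follows the paper's argument. Both translate face-cut Golodness into the requirement that every squarefree product $(x_G/x_i)(x_H/x_j)$ lie in $I$, set $U$ and $V$ to be the supports of the two factors, and use that a squarefree monomial lies in $I$ exactly when its support is a non-face. Your version is more careful than the paper's: generating $\partial_i(I)$ by $x_G/x_i$ over all non-faces $G\ni i$ (not just minimal ones) makes the match with arbitrary decompositions $F=U\sqcup V$ exact, and you treat the trivial cases $i\in U$ or $j\in V$ explicitly, so your closing reduction to minimal non-faces is not actually needed.
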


\begin{proof}
The ideal $I$ is face-cut Golod if for any two monomials $m,m'\in \mingens(I)$ and any two distinct variables $x_i,x_j$, 
the monomial 
$ (m/x_i)(m'/x_j)
$
either contains a square, or is in $I$. If $
(m/x_i)(m'/x_j)
$ is squarefree then it corresponds to a subset $F$ of $\{1,\ldots,n\}$, and $F$ is a face if and only if $
(m/x_i)(m'/x_j)
\notin I$. If we set $U$ and $V$ to be the subsets of  $\{1,\ldots,n\}$ corresponding to $m/x_i$ and $m'/x_j$, then the definition of face-cut Golod corresponds to the statement of the~proposition.
\end{proof}

\section{Golod squarefree monomial ideals and open problems} \label{Section hypergraphs}

Squarefree monomial ideals can be viewed as edge ideals of hypergraphs, a more general construction than edge ideals of graphs (cf. \cite{HVT08}).  

A \emph{hypergraph} $\mathcal{H} = (V(\mathcal{H}),E(\mathcal{H}))$ consists of a vertex set $V(\mathcal{H}) = \{x_1, \dots, x_r\}$ and an edge set $E(\mathcal{H})$, whose elements are subsets of $V(\mathcal{H})$. We restrict our attention to \emph{simple} hypergraphs; that is, when there is no nontrivial containment between edges in $E(\mathcal{H})$. A simple hypergraph is also referred to as a \emph{Sperner system}. A simple graph is a simple hypergraph in which each edge has cardinality 2.

The \emph{edge ideal} of a hypergraph $\mathcal{H}$ is constructed in a similar fashion as that of a graph:
\[
I(\mathcal{H})\coloneqq \left\langle \prod_{x\in e} x ~\middle|~ e \in E(\mathcal{H})\right\rangle \subseteq S = \Bbbk[V(\mathcal{H})].
\]

With this concept, it is easy to combinatorially describe lcm-strongly $d$-Golod squarefree monomial ideals.

\begin{lemma}\label{lem:lcm-strongly-d-Golod--squarefree}
    A squarefree monomial ideal $I=I(\mathcal{H})$ is lcm-strongly $d$-Golod if for any two disjoint edges $E_1,E_2\in E(\mathcal{H})$, the set $E_1\cup E_2 \smallsetminus \{x,y\}$, where $x\in E_1$ and $y\in E_2$, contains an edge of $\mathcal{H}$.\qed
\end{lemma}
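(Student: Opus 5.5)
The plan is to unwind the definition of lcm-strongly $d$-Golod in the squarefree case, using the explicit description of $d^i$ on monomials recalled in the proof of Proposition~\ref{prop_sqfree_not_strongly_Golod}. For a monomial $u$, $d^r(u)=u/x_r$ when $x_r$ is the smallest variable dividing $u$, and $d^i(u)=0$ otherwise. Since the $d^i$ are $\Bbbk$-linear, $d^i(I)$ is spanned by the $d^i(w)$ with $w$ a monomial of $I$. I will use that $I$ is squarefree, so $\lcm(I)=\prod_{x\in V(\mathcal H)}x$. Hence $(d^i(I)d^j(I))^{\leq \lcm(I)}$ is generated by the squarefree products $d^i(u)d^j(v)$ with $u,v$ monomials in $I$.

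\textbf{Reduction to minimal generators.} Take monomials $u,v\in I$ and $i\ne j$ with $d^i(u)=u/x_i$, $d^j(v)=v/x_j$, and with $(u/x_i)(v/x_j)$ squarefree. I want to show this product lies in $I$.
\begin{itemize}
\item If some minimal generator $m\mid u$ does not contain $x_i$, then $m\mid u/x_i$, so the product is already in $I$. The same holds for $v$.
\item Otherwise, choose minimal generators $m\mid u$ and $m'\mid v$ with $x_i\in\supp(m)$ and $x_j\in\supp(m')$. Then $(m/x_i)(m'/x_j)$ divides $(u/x_i)(v/x_j)$.
\end{itemize}
In the second case it suffices to show $(m/x_i)(m'/x_j)\in I$. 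This product is squarefree, being a divisor of a squarefree monomial.

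\textbf{Using disjointness.} Let $E_1=\supp(m)$ and $E_2=\supp(m')$. Squarefreeness of $(m/x_i)(m'/x_j)$ forces $E_1\smallsetminus\{x_i\}$ and $E_2\smallsetminus\{x_j\}$ to be disjoint. The hypothesis of the lemma only concerns disjoint edges, so the main step is to handle the case where $E_1,E_2$ overlap.

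Any overlap must lie in $\{x_i,x_j\}$. Suppose $x_i\in E_2$. Then $m'/x_j$ contains $x_i$, so $(m/x_i)(m'/x_j)$ contains $x_i$. In that case $m\mid (m/x_i)(m'/x_j)$, and the product is in $I$. Symmetrically, if $x_j\in E_1$ we are done. So we may assume $E_1\cap E_2=\emptyset$. The hypothesis then gives an edge contained in $E_1\cup E_2\smallsetminus\{x_i,x_j\}$, which is exactly the support of $(m/x_i)(m'/x_j)$. Hence this product lies in $I$.

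Finally, note that the permutation and smallest-variable subtleties of $d^i$ play no role. The argument only ever uses that $d^i$ of a monomial is either $0$ or $u/x_i$ for some $x_i\mid u$, so it covers every such choice.
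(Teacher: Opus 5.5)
Your argument is correct. The paper states this lemma without proof, as an immediate unwinding of the definition, and your argument is essentially that intended verification. The two points you make explicit are exactly what is needed, since the hypothesis only speaks about disjoint edges: passing from arbitrary monomials $u,v\in I$ to minimal generators $m\mid u$, $m'\mid v$, and dispatching overlapping edges because then $m$ or $m'$ divides $(m/x_i)(m'/x_j)$.

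One small slip: $\lcm(I)=\prod_{x\in V(\mathcal H)}x$ fails if some vertex lies in no edge. This is harmless, because you in fact show that every squarefree product $d^i(u)d^j(v)$ with $i\neq j$ lies in $I$, and any product dividing $\lcm(I)$ is squarefree.
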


With this new definition, it is clear that the lcm-strongly $d$-Golod property also descends along restriction, just like Golodness.

\begin{lemma}\label{lem:strongly-Golod-descends}
    Let $I$ be a lcm-strongly $d$-Golod squarefree monomial ideal. Then so is $I^{\leq m}$ for any monomial $m$. \qed
\end{lemma}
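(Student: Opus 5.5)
We check the combinatorial criterion of Lemma~\ref{lem:lcm-strongly-d-Golod--squarefree} directly for $I^{\leq m}$. We may take $m$ squarefree, namely $m=\prod_{x\in S}x$ with $S=\supp(m)$, since $I$ is squarefree and $I^{\leq m}$ only depends on which minimal generators divide $m$.

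Write $I=I(\mathcal H)$. Then $I^{\leq m}=I(\mathcal H_S)$, where $\mathcal H_S$ is the hypergraph whose edges are the edges of $\mathcal H$ contained in $S$. This is still a simple hypergraph, because its edges form a subset of $E(\mathcal H)$.

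Now take two disjoint edges $E_1,E_2$ of $\mathcal H_S$ and vertices $x\in E_1$, $y\in E_2$.
\begin{itemize}
\item[(a)] Since $E_1,E_2$ are also disjoint edges of $\mathcal H$ and $I$ is lcm-strongly $d$-Golod, the set $F=(E_1\cup E_2)\smallsetminus\{x,y\}$ contains an edge $E$ of $\mathcal H$.
\item[(b)] Since $E_1\cup E_2\subseteq S$, we get $E\subseteq F\subseteq S$, so $E$ is an edge of $\mathcal H_S$.
\end{itemize}
Hence $I^{\leq m}$ satisfies the criterion and is lcm-strongly $d$-Golod.

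The main obstacle is that Lemma~\ref{lem:lcm-strongly-d-Golod--squarefree} is stated only as a sufficient condition, but above we used it in both directions: we invoked it for $I$ and concluded it for $I^{\leq m}$. So the plan is to first confirm that the criterion is equivalent to lcm-strong $d$-Golodness for squarefree ideals, using the description of $d^i$ on monomials from the proof of Proposition~\ref{prop_sqfree_not_strongly_Golod}.

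For a squarefree monomial $u$, we have $d^i(u)=u/x_i$, where $x_i$ is the smallest variable dividing $u$. The ideal $d^i(I)$ is generated by $d^i$ applied to monomials of $I$. A product $d^i(f)d^j(g)$ with $i\neq j$ that divides $\lcm(I)$ must be squarefree, because $\lcm(I)$ is squarefree. Reducing to minimal generators, this becomes exactly the statement that $(uv)/(x_ix_j)\in I$ for generators $u,v$ with disjoint supports, where $x_i\mid u$ and $x_j\mid v$.

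Some care is needed with the restriction to "smallest variable". If the criterion is only available for minimal variables, we can either work with the characteristic-zero version via $\partial_{x_i}$, or note that multiplying $f$ by variables lets every variable of the support become the one acted on. Either way, the criterion is checked over all $x\in E_1$, $y\in E_2$. With this equivalence in hand, the edge argument above completes the proof.

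Alternatively, one can avoid the hypergraph language altogether. The generators of $d^i(I^{\leq m})d^j(I^{\leq m})$ that divide $\lcm(I^{\leq m})$ also divide $\lcm(I)$ and belong to $d^i(I)d^j(I)$, so they lie in $I$. Being squarefree and dividing $m$, they then lie in $I^{\leq m}$ by Lemma~\ref{lem:squarefree-retract}, since $I\cap Q_{|S}=I^{\leq m}$.
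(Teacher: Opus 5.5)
Your main argument, restricting the criterion of Lemma~\ref{lem:lcm-strongly-d-Golod--squarefree} to the edges inside $S=\supp(m)$, is the route the paper intends. The paper gives no proof beyond saying the descent is clear from that criterion. You are also right that the lemma is only stated as a sufficient condition, so the converse needs an argument.

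\textbf{The patch for the converse is wrong.} Multiplying a monomial by variables can only lower its minimal index. So a non-minimal variable of a generator $v$ can never become the variable on which $d^i$ acts. Passing to $\partial_{x_i}$ proves the characteristic-zero statement, not the $d$-statement.

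With the definition as written (fixed order of the variables, no permutations), the ``all $x\in E_1$, $y\in E_2$'' criterion is in fact not necessary. Take $I=(x_1x_3,x_2x_4,x_3x_4)$. Every product $d^i(f)d^j(g)$ with $i\neq j$ that divides $\lcm(I)=x_1x_2x_3x_4$ is a multiple of $x_3x_4\in I$, so $I$ satisfies the definition. Yet $E_1=\{x_1,x_3\}$, $E_2=\{x_2,x_4\}$, $x=x_3$, $y=x_4$ leaves $\{x_1,x_2\}$, which contains no edge.

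What your reduction actually proves is the criterion with $x=\min E_1$ and $y=\min E_2$ only. All pairs $x,y$ are recovered only if ``strongly'' is read as quantifying over permutations of the variables, as in \cite{HM18}. In that case the mechanism is a permutation, as in the proof of Proposition~\ref{prop_sqfree_not_strongly_Golod}, not multiplication. Steps (a)--(b) go through verbatim with the min-criterion, because the minimal vertex of an edge does not depend on the ambient hypergraph. So the hypergraph route is repaired simply by dropping the ``all $x,y$'' claim.

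Your final ``alternatively'' paragraph is already a complete proof on its own, and it is a genuinely different route from the paper's. It works directly from the definition and needs no combinatorial characterization. It is therefore insensitive to which variant of the notion is meant: $d^i$ with a fixed order, $\partial_{x_i}$ in characteristic zero, or the permutation version, where you apply it to $\sigma(I)$ using $\sigma(I^{\leq m})=\sigma(I)^{\leq\sigma(m)}$.

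One step there should be made explicit. A minimal generator $p$ of $d^i(I^{\leq m})d^j(I^{\leq m})$ dividing $\lcm(I^{\leq m})$ need not be a minimal generator of $d^i(I)d^j(I)$. It is, however, divisible by one, and that generator still divides $\lcm(I)$, so it lies in $I$. Hence $p\in I\cap Q_{|S}=I^{\leq m}$ by Lemma~\ref{lem:squarefree-retract}.
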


There are many types of monomial ideals associated with graphs in the literature, e.g., edge ideals, path ideals, or connected ideals. It is a problem of interest to characterize graphs whose associated ideals have a certain algebraic property. The most classical such result is arguably Fr\"oberg's theorem: a graph is co-chordal if and only if its edge ideal has a linear resolution. A popular technique to approach an analog is to check whether the property is inherited by a (induced) subgraph. Recall that $\mathcal{H}'= (V(\mathcal{H}'),E(\mathcal{H}'))$ is an \emph{induced sub-hypergraph}  of a hypergraph $\mathcal{H}= (V(\mathcal{H}),E(\mathcal{H}))$  if the following holds:
\begin{itemize}
    \item $V(\mathcal{H}')\subseteq V(\mathcal{H})$,\ $E(\mathcal{H}')\subseteq E(\mathcal{H})$;
    \item if for some integer $t$ and vertices $x_{i_1},\dots, x_{i_t}\in V(\mathcal{H}')$, we have $\{x_{i_1},\dots, x_{i_t}\}\in E(\mathcal{H})$, then $\{x_{i_1},\dots, x_{i_t}\}\in E(\mathcal{H}')$.
\end{itemize}

The properties of Golodness and lcm-strong Golodness descend to induced sub-hypergraphs:

\begin{lemma}\label{lem:Golod-induced-subgraph}
    Let $\mathcal{H'}$ be an induced sub-hypergraph of a hypergraph $\mathcal{H}$. If $I(\mathcal{H})$ is Golod (resp, lcm-strongly Golod), then so is $I(\mathcal{H}')$. 
\end{lemma}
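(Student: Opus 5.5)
The plan is to realize $I(\mathcal{H}')$ as a restriction of $I(\mathcal{H})$ in the sense of Section~2, namely $I(\mathcal{H}')=I(\mathcal{H})^{\leq m}$ with $m=\prod_{x\in V(\mathcal{H}')}x$. Both claims then follow from results already proved: Lemma~\ref{lem:squarefree-retract} (or Theorem~\ref{thm:Golod-restriction}) for Golodness, and Lemma~\ref{lem:strongly-Golod-descends} for lcm-strong Golodness.

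First we verify the identification. Set $S=V(\mathcal{H}')$ and $m=\prod_{x\in S}x$. By definition, $I(\mathcal{H})^{\leq m}$ is generated by the monomials $\prod_{x\in e}x$ with $e\in E(\mathcal{H})$ and $e\subseteq S$. Since $\mathcal{H}'$ is induced, every edge of $\mathcal{H}$ contained in $S$ is an edge of $\mathcal{H}'$, and conversely $E(\mathcal{H}')\subseteq E(\mathcal{H})$ with all edges inside $S$. Hence the two generating sets coincide. Because $\mathcal{H}$ is simple, both sets are the minimal generators, and $I(\mathcal{H})^{\leq m}=I(\mathcal{H}')$, viewed in $\Bbbk[S]$ or extended to $\Bbbk[V(\mathcal{H})]$.

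For Golodness, Lemma~\ref{lem:squarefree-retract} says that $I(\mathcal{H})^{\leq m}$ is a restriction of $I(\mathcal{H})$ with respect to $S=\supp(m)$. Thus $\Bbbk[S]/I(\mathcal{H}')$ is an algebra retract of $\Bbbk[V(\mathcal{H})]/I(\mathcal{H})$, and it is Golod by Lemma~\ref{lem:restriction-retract}. If we instead view $I(\mathcal{H}')$ in the larger ring, adjoining the extra variables does not affect Golodness, by \cite[Proposition~5.2.4]{Avramov1998}.

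For lcm-strong Golodness, we apply Lemma~\ref{lem:strongly-Golod-descends} directly. Alternatively we can argue with the combinatorial criterion of Lemma~\ref{lem:lcm-strongly-d-Golod--squarefree}. Take disjoint edges $E_1,E_2$ of $\mathcal{H}'$ and vertices $x\in E_1$, $y\in E_2$. These are also edges of $\mathcal{H}$, so $E_1\cup E_2\smallsetminus\{x,y\}$ contains an edge $e$ of $\mathcal{H}$. That edge lies inside $S$, so $e\in E(\mathcal{H}')$ because $\mathcal{H}'$ is induced.

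The only step needing any care is the identification $I(\mathcal{H}')=I(\mathcal{H})^{\leq m}$, and in particular the use of the induced hypothesis. The remaining steps are straight applications of the earlier lemmas.
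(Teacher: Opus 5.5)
Your proposal is correct and follows the paper's proof. The paper also identifies $I(\mathcal{H}')$ with $I(\mathcal{H})^{\leq m}$ for $m=\prod_{x\in V(\mathcal{H}')}x$ and cites Theorem~\ref{thm:Golod-restriction} and Lemma~\ref{lem:strongly-Golod-descends}; your use of Lemma~\ref{lem:squarefree-retract} for the Golod part is simply the more direct route available because $I(\mathcal{H})$ is squarefree.

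One caveat concerns your optional combinatorial argument, which relies on the converse of Lemma~\ref{lem:lcm-strongly-d-Golod--squarefree}. The paper states that lemma only as a sufficient condition. The converse does hold for the characteristic-zero notion: apply the definition to $\partial_x$ and $\partial_y$ of the monomials of two disjoint edges. It fails, however, for the $d$-version with a fixed variable order. For example, $(x_1x_2,x_3x_4,x_2x_4)$ is lcm-strongly $d$-Golod, yet it violates the criterion for $x=x_2$, $y=x_4$. So that argument is best kept as a remark rather than a proof.
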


\begin{proof}
    It follows directly from definition that
    \[
    I(\mathcal{H}')=\left( I(\mathcal{H}) \right)^{\leq \prod_{x\in V(\mathcal{H}')} x}.
    \]
    The result then follows from Theorem~\ref{thm:Golod-restriction} and  Lemma~\ref{lem:strongly-Golod-descends}.
\end{proof}

Characterizing all hypergraphs whose edge ideal is Golod is a problem of interest, especially in toric topology, where the Golod property for Stanley--Reisner ideals corresponds to geometric properties of moment-angle spaces. See for example \cite{Amelotte2024,BJ07,Frank2018,IK2018,Kat16}.

Due to Lemma~\ref{lem:Golod-induced-subgraph}, both the properties of Golodness and lcm-strong Golodness are inherited by induced sub-hypergraphs, and thus the above problem becomes a purely combinatorial one: finding (all) forbidden structures. We will give an example in the case of edge ideals of graphs. Recall that if $G$ is a graph, then $G^c$ is called the \emph{complement graph} of $G$, with the vertex and edge sets as follows:
\[
V(G^c)=V(G), \quad E(G^c) =\big\{ \{x,y\}\colon x,y\in V(G)  \big\}\setminus E(G).
\]
All Golod edge ideals have already been classified.

\begin{theorem}[\protect{\cite{CINR15}}]\label{thm:Golod-edge-ideals}
    Let $G$ be a finite simple graph with no isolated vertices. Then $I(G)$ is Golod if and only if $G^c$ does not contain the $n$-cycle graph for any $n\geq 4$, as an induced subgraph.
\end{theorem}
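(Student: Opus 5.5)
The plan is to prove the two implications separately. Both reduce, through the restriction machinery of Section~2 and the results recalled in the introduction, to standard facts about linear resolutions and Gorenstein rings.

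\emph{Sufficiency.} Suppose $G^c$ contains no induced cycle of length $n\geq 4$, i.e.\ $G^c$ is chordal. By Fr\"oberg's theorem (recalled just before the statement), the edge ideal $I(G)$, which is generated by quadrics, has a $2$-linear resolution. The theorem of Backelin and Fr\"oberg \cite{BF85}, or its generalization \cite[Theorem 4]{HRW99}, then gives that $Q/I(G)$ is Golod.

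\emph{Necessity, by contraposition.} Suppose $G^c$ contains an induced cycle $C_n$ with $n\geq 4$, on a vertex set $W$. Let $H$ be the induced subgraph of $G$ on $W$. Then $H=C_n^c$, and $H$ is an induced sub-hypergraph of $G$ viewed as a hypergraph. By Lemma~\ref{lem:Golod-induced-subgraph}, which rests on Theorem~\ref{thm:Golod-restriction} and ultimately on Lemma~\ref{lem:restriction-retract}, it suffices to show that $I(C_n^c)$ is not Golod. There are two cases.
\begin{itemize}
\item If $n=4$, then $C_4^c$ is two disjoint edges, so $I(C_4^c)=(x_1x_3,x_2x_4)$. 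This is a complete intersection of two quadrics. It is not a hypersurface, hence not Golod.
\item If $n\geq 5$, view $I(C_n^c)$ as the Stanley--Reisner ideal of the independence complex of $C_n^c$, which is the clique complex of $C_n$. Since $n\geq 4$, $C_n$ has no triangles, so this complex is the $1$-dimensional cycle $C_n$ itself, a triangulated circle. A triangulated sphere has a Gorenstein Stanley--Reisner ring, and its height is $n-2\geq 3>1$. A Gorenstein ring that is not a hypersurface is not Golod, as recalled in the introduction, so $Q/I(C_n^c)$ is not Golod.
\end{itemize}
In either case $Q/I(G)$ fails to be Golod.

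The only step needing care is the combinatorial identification in the case $n\geq 5$. There one must check that the minimal nonfaces of the cycle complex are exactly the non-edges of $C_n$: triangles are not minimal nonfaces, because each contains a non-edge once $n\geq 4$. One also needs the standard fact that the Stanley--Reisner ring of a triangulated sphere is Gorenstein. Everything else is a direct application of results already stated in the paper.
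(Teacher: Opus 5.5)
Your proof is correct. The paper does not prove this statement; it quotes it from \cite{CINR15}. The argument it points to in the introduction, via \cite[Remark~3.1]{CINR15}, differs from yours in the necessity direction.

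\emph{The cited route.} Since $I(G)$ is generated by quadratic monomials, $Q/I(G)$ is Koszul. For a Koszul ring, the Golod property forces a linear resolution. The bigraded Poincar\'e series $P^R_\Bbbk(s,t)$ is supported on the diagonal. The bigraded Golod series $(1+st)^n/(1-\sum_{i,j}\beta^Q_{i,j}(R)s^{i+1}t^j)$ has nonnegative coefficients and picks up $s^{i+1}t^j$ whenever $\beta^Q_{i,j}(R)\neq 0$. Fr\"oberg's theorem, used in both directions, then turns ``linear resolution'' into co-chordality.

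\emph{Your route.} You use Fr\"oberg only for sufficiency. For necessity, you restrict to the induced subgraph $C_n^c$ via Lemma~\ref{lem:Golod-induced-subgraph}. You then recognize $Q/I(C_n^c)$ as the Stanley--Reisner ring of the $n$-gon, a Gorenstein ring of height $n-2\geq 2$, hence not Golod.

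\emph{What each buys.} The Koszul route is shorter and proves more: Golod $\Leftrightarrow$ linear resolution for every quadratic monomial ideal, and indeed for every Koszul algebra, with no obstructions to identify. Your route stays entirely within the restriction/retract technique of Section~2. It runs exactly parallel to the proofs of Theorems~\ref{thm:Golod-determinantal} and~\ref{thm:Golod-binomial}. It also identifies the minimal non-Golod edge ideals explicitly as Gorenstein rings.

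\emph{A small simplification.} Your case split is unnecessary. The $4$-gon is also a triangulated circle, and its Stanley--Reisner ideal $(x_1x_3,x_2x_4)$ is Gorenstein of height $2$. So the sphere argument covers all $n\geq 4$ at once.
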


Next we classify all lcm-strongly Golod edge ideals.
Recall that the complete bipartite graph $K_{1,n-1}$ is also known as the \emph{star graph} $G$ on $n$ vertices.

\begin{theorem}\label{thm:lcm-strongly-Golod-edge-ideals}
    Let $G$ be a finite simple graph with no isolated vertices. The following are equivalent:
    \begin{enumerate}
        \item $I(G)$ is lcm-strongly Golod.
        \item The only induced subgraph of $G$ that has four vertices and two disjoint edges is the complete graph $K_4$ \Kfoursymb[1].
        \item $G$ does not contain the union of two disjoint edges \twoptwosymbol[1], the 4-path graph \pfoursymbol[1], the paw graph \pawsymb[1], the 4-cycle graph \cfoursymbol[1], or the diamond graph \diamondsymb[1], as an induced subgraph. 
        \item $G$ is either a star graph or a complete graph.
    \end{enumerate}
\end{theorem}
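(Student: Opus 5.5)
The plan is to go around the cycle (1)$\Leftrightarrow$(2)$\Leftrightarrow$(3) and (2)$\Leftrightarrow$(4). The heart of the argument is translating the lcm-strongly Golod condition for $I(G)$ into a purely graph-theoretic condition.

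\emph{(1)$\Leftrightarrow$(2).} Since $I=I(G)$ is squarefree and $G$ has no isolated vertices, $\lcm(I)=\prod_{x\in V(G)}x$. So $(\partial_{x_i}(I)\partial_{x_j}(I))^{\leq \lcm(I)}$ only involves squarefree products $(u/x)(v/y)$ with $u,v\in\mingens(I)$, $x\mid u$, $y\mid v$, $x\neq y$. This is the graph case of Lemma~\ref{lem:lcm-strongly-d-Golod--squarefree}, which is stated as an ``if''. I will redo it directly with partial derivatives and check both directions; this bookkeeping is the step where I expect to have to be most careful.

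First suppose $u=ab$ and $v=bc$ share the vertex $b$. The product is squarefree only if $b$ is removed. Then $(u/x)(v/y)$ is $ab=u$ or $bc=v$, up to relabelling, hence lies in $I$ automatically. So only disjoint edges $u=ab$, $v=cd$ matter. Then $(u/x)(v/y)$ is one of the cross pairs $ac, ad, bc, bd$. A degree-two squarefree monomial lies in $I(G)$ iff it is an edge. Hence (1) is equivalent to the statement: for any two disjoint edges $ab,cd$ of $G$, all four cross pairs are edges. Equivalently, every induced subgraph on four vertices containing two disjoint edges is $K_4$, which is (2).

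\emph{(2)$\Leftrightarrow$(3).} This is an enumeration of the graphs on four vertices containing a perfect matching, sorted by number of edges. With two edges there is only $2K_2$. With three edges there is only $P_4$: the star $K_{1,3}$ and the triangle plus an isolated vertex have no perfect matching. With four edges there are $C_4$ and the paw. With five edges there is the diamond, and with six edges $K_4$. Excluding all of these except $K_4$ is exactly the list in (3).

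\emph{(4)$\Rightarrow$(2)} is immediate. A star has no two disjoint edges, and in a complete graph every induced subgraph is complete.

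\emph{(2)$\Rightarrow$(4).} First, $G$ is connected, since edges in two different components would induce $2K_2$. If $G$ has no two disjoint edges, then $G$ is a star or a triangle, and a triangle is complete. Otherwise two disjoint edges span a $K_4$ on a vertex set $S$.

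Next I show every vertex $w\notin S$ is adjacent to all of $S$. Since $G$ is connected I may take $w$ at distance one from $S$, and then induct on the distance. Let $u$ be the neighbour of $w$ in $S$. The edge $wu$ is disjoint from an edge of $S\smallsetminus\{u\}$, so by (2) the four vertices span a $K_4$. Repeating with the other edges of $S$ gives $w$ adjacent to all of $S$. This makes every vertex adjacent to all of $S$, so the induction goes through.

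Finally, for any $x,y\notin S$, choose distinct $s_1,s_2\in S$. The edges $xs_1$ and $ys_2$ are disjoint, so by (2) they span a $K_4$, giving $xy\in E(G)$. Hence $G$ is complete.
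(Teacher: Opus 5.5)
Your proof is correct, and for (1)$\Leftrightarrow$(2) and (2)$\Leftrightarrow$(3) it matches the paper. You reduce the condition to ``any two disjoint edges span a $K_4$'' and then enumerate the six four-vertex graphs with a perfect matching. Your treatment of two edges sharing a vertex is more explicit than the paper's, which only discusses disjoint edges.

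The genuine difference is in reaching (4). The paper works from the forbidden list (3). It splits on whether $G$ is a tree; if so, $P_4$-freeness forces a star. Otherwise $G$ contains a cycle, and $P_4$- and $C_4$-freeness force an induced triangle. It then grows a clique from that triangle along an ordering $x_1,\dots,x_n$ in which each vertex has an earlier neighbour, applying the disjoint-edge rule to $x_1x_k$ and $x_{i-1}x_i$.

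You instead argue directly from (2) and split on whether $G$ has two disjoint edges. If it does not, you use the elementary fact that a pairwise-intersecting edge set is a star or a triangle. If it does, you start from a $K_4$ and absorb the remaining vertices. This bypasses (3) and any cycle analysis, at the cost of that small matching lemma.

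The paper's connected ordering builds in connectivity, so its induction is cleaner than your ``induction on distance,'' which is stated too tersely. What you actually show is that a vertex at distance one from $S$ is adjacent to all of $S$. A vertex at distance two, together with such a neighbour and an edge of $S$, would then violate (2). Hence every vertex outside $S$ is at distance one, and you should say this explicitly.
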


\begin{proof}
    \emph{(1) $\Longleftrightarrow$ (2):} 
    For convenience, we first rephrase the definition of being lcm-strongly Golod for the ideals of interest: the ideal $I(G)$ is lcm-strongly Golod if and only for any two disjoint edges $\{w,x\}$ and $\{y,z\}$ of $G$,  the four sets $\{w,y\},\{w,z\},\{x,y\},$ and $\{x,z\}$ each contains an edge of $G$. In our case, this forces these four sets to be edges of $G$. In other words, the ideal $I(G)$ is lcm-strongly Golod if and only if the only induced subgraph of $G$ that has four vertices and two disjoint edges is the complete graph $K_4$, as desired.

    \emph{(2) $\implies$ (3):} The union of two disjoint edges \twoptwosymbol[1], the 4-path graph \pfoursymbol[1], the paw graph \pawsymb[1], the 4-cycle graph \cfoursymbol[1], and the diamond graph \diamondsymb[1] do not satisfy (2). Since (2) is a condition that is inheritted by induced subgraphs, (3) follows.

    \emph{(3) $\implies$ (2):} This is straightforward as the only graph with four vertices and two disjoint edges are the five listed in (3), together with the complete graph $K_4$ \Kfoursymb[1]. Thus (2) follows from~(3).
    
    \emph{(2)+(3) $\implies$ (4):} As $G$ does not contain the union of two disjoint edges \twoptwosymbol[1], it is connected. Suppose that $G$ is a tree, i.e., $G$ does not contain any induced cycle. Then $G$ is a star graph since $G$ does not contain any induced 4-path graph \pfoursymbol[1], as desired. 
    
    Now we can assume that $G$ contains an induced cycle. Since $G$ does not contain any induced 4-path \pfoursymbol[1], it does not contain any induced $n$-cycle, either, for any $n\geq 5$. Also recall that $G$ does not contain any induced 4-cycle \cfoursymbol[1] by (3). Therefore $G$ contains an induced 3-cycle, which we assume to be formed by three vertices $x_1,x_2,x_3$. 
    
    Set $V(G)=\{x_1,x_2,\dots, x_n\}$ such that for any $i\in [2,n]$, the vertex $x_i$ is incident to some other vertex $x_j$ where $j\in [1,i-1]$. Note that we have $n\geq 3$. We have the following claim.
    \begin{claim}
        For any $i\in [3,n]$, the induced subgraph of $G$ formed by $x_1,\dots, x_i$ is the complete graph $K_i$.
    \end{claim}
    \begin{proof}
        We have already proved the base case $i=3$, as $x_1,x_2,x_3$ already form an induced 3-cycle of $G$. By induction, assume that $i\geq 4$, and the induced subgraph of $G$ formed by $x_1,\dots, x_{i-1}$ is the complete graph $K_{i-1}$. It now suffices to show that $x_i$ is incident to $x_1,\dots, x_{i-1}$. By our assumption, $x_i$ is incident to some $x_j$ where $j\in [1,i-1]$. Without loss of generality assume that $j=i-1$. For any $k\in [2,i-2]$, consider the induced subgraph of $G$ formed by $x_1,x_k,x_{i-1},x_i$. This induced subgraph has four vertices and two disjoint edges ($x_1x_k$ by the induction hypothesis, and $x_{i-1}x_i$), and therefore must be the complete graph $K_{4}$ \Kfoursymb[1], by (2). In particular, this implies that $x_i$ is incident to $x_1$ and $x_k$ for any $k\in [2,i-2]$. This concludes the~proof.
    \end{proof}

    (4) then follows from the claim when $i=n$.
    
    \emph{(4) $\implies$ (3):} This is straightforward from the definition of star and complete graphs. 
\end{proof}

An explicit description of all hypergraphs whose edge ideal is Golod seems out-of-reach at this moment. A description of all hypergraphs whose edge ideal is lcm-strongly Golod in terms of forbidden induced sub-hypergraphs can be theoretically obtained from Lemma~\ref{lem:lcm-strongly-d-Golod--squarefree}, a task that is easier with more imposed conditions. Theorem~\ref{thm:lcm-strongly-Golod-edge-ideals} (3) is a complete list of forbidden induced sub-hypergraphs when the hypergraph in question is assumed to be \emph{$2$-uniform}, i.e., every edge is of cardinality two. Taking path ideals or connected ideals of a graph is a generalization of taking edge ideals, and each such action corresponds to a special hypergraph constructed from a finite simple graph. In this direction, one may ask whether Theorem~\ref{thm:lcm-strongly-Golod-edge-ideals} can be generalized accordingly for such hypergraphs, leading to the following:

\begin{question} \label{quest:lcm strongly Golod path and connected}
    For each integer $t\geq 3$, what are all finite simple graphs that admit a lcm-strongly Golod $t$-path ideal/$t$-connected ideal?
\end{question}

It is important to note that, under suitable conditions (e.g., finite simple graphs), the full family of forbidden induced sub-hypergraphs for the property of Golodness is an infinite one, while that for the property of lcm-strong Golodness is finite. The latter can be seen from Lemma~\ref{lem:lcm-strongly-d-Golod--squarefree}. This phenomenon carries over to the cases of path ideals and connected ideals, confirming that they offer a natural extension to the class of edge ideals to be studied in terms of Question~\ref{quest:lcm strongly Golod path and connected}.

\bibliographystyle{amsplain}
\bibliography{refs}

@Misc{M2,
          author = {Grayson, Daniel R. and Stillman, Michael E.},
          title = {Macaulay2, a software system for research in algebraic geometry},
          howpublished = {Available at \url{http://www2.macaulay2.com}}
        }

@article {May69,
    AUTHOR = {May, J. Peter},
     TITLE = {Matric {M}assey products},
   JOURNAL = {J. Algebra},
  FJOURNAL = {Journal of Algebra},
    VOLUME = {12},
      YEAR = {1969},
     PAGES = {533--568},
      ISSN = {0021-8693},
   MRCLASS = {18.20 (55.00)},
  MRNUMBER = {238929},
       DOI = {10.1016/0021-8693(69)90027-1},
       URL = {https://doi.org/10.1016/0021-8693(69)90027-1},
}

@article {RossiSega2014,
    AUTHOR = {Rossi, Maria Evelina and \c Sega, Liana M.},
     TITLE = {Poincar\'e{} series of modules over compressed {G}orenstein
              local rings},
   JOURNAL = {Adv. Math.},
  FJOURNAL = {Advances in Mathematics},
    VOLUME = {259},
      YEAR = {2014},
     PAGES = {421--447},
      ISSN = {0001-8708,1090-2082},
   MRCLASS = {13D02 (13A02 13D07 13E10 13H10)},
  MRNUMBER = {3197663},
MRREVIEWER = {Lars\ Winther\ Christensen},
       DOI = {10.1016/j.aim.2014.03.024},
       URL = {https://doi.org/10.1016/j.aim.2014.03.024},
}

@article {VdB2022,
    AUTHOR = {VandeBogert, Keller},
     TITLE = {Products of ideals and {G}olod rings},
   JOURNAL = {Proc. Amer. Math. Soc.},
  FJOURNAL = {Proceedings of the American Mathematical Society},
    VOLUME = {150},
      YEAR = {2022},
    NUMBER = {8},
     PAGES = {3345--3356},
      ISSN = {0002-9939,1088-6826},
   MRCLASS = {13D02 (13C13 13D07)},
  MRNUMBER = {4439458},
MRREVIEWER = {Thanh\ Vu},
       DOI = {10.1090/proc/15968},
       URL = {https://doi.org/10.1090/proc/15968},
}

@article {Levin1975,
    AUTHOR = {Levin, Gerson},
     TITLE = {Local rings and {G}olod homomorphisms},
   JOURNAL = {J. Algebra},
  FJOURNAL = {Journal of Algebra},
    VOLUME = {37},
      YEAR = {1975},
    NUMBER = {2},
     PAGES = {266--289},
      ISSN = {0021-8693},
   MRCLASS = {13H10},
  MRNUMBER = {429868},
MRREVIEWER = {Tadayuki\ Matsuoka},
       DOI = {10.1016/0021-8693(75)90077-0},
       URL = {https://doi.org/10.1016/0021-8693(75)90077-0},
}

@article {DDS2022,
    AUTHOR = {Dao, Hailong and De Stefani, Alessandro},
     TITLE = {On monomial {G}olod ideals},
   JOURNAL = {Acta Math. Vietnam.},
  FJOURNAL = {Acta Mathematica Vietnamica},
    VOLUME = {47},
      YEAR = {2022},
    NUMBER = {1},
     PAGES = {359--367},
      ISSN = {0251-4184,2315-4144},
   MRCLASS = {13D02 (05E40)},
  MRNUMBER = {4406577},
MRREVIEWER = {Ali\ Akbar\ Yazdan Pour},
       DOI = {10.1007/s40306-020-00390-2},
       URL = {https://doi.org/10.1007/s40306-020-00390-2},
}

@article {BJ07,
    AUTHOR = {Berglund, Alexander and J\"ollenbeck, Michael},
     TITLE = {On the {G}olod property of {S}tanley-{R}eisner rings},
   JOURNAL = {J. Algebra},
  FJOURNAL = {Journal of Algebra},
    VOLUME = {315},
      YEAR = {2007},
    NUMBER = {1},
     PAGES = {249--273},
      ISSN = {0021-8693,1090-266X},
   MRCLASS = {13F55 (13D07 13D40)},
  MRNUMBER = {2344344},
MRREVIEWER = {Christopher\ A.\ Francisco},
       DOI = {10.1016/j.jalgebra.2007.04.018},
       URL = {https://doi.org/10.1016/j.jalgebra.2007.04.018},
}

@incollection {Herzog91,
    AUTHOR = {Herzog, J\"urgen},
     TITLE = {Canonical {K}oszul cycles},
 BOOKTITLE = {International {S}eminar on {A}lgebra and its {A}pplications
              ({S}panish) ({M}\'exico {C}ity, 1991)},
    SERIES = {Aportaciones Mat. Notas Investigaci\'on},
    VOLUME = {6},
     PAGES = {33--41},
 PUBLISHER = {Soc. Mat. Mexicana, M\'exico},
      YEAR = {1992},
      ISBN = {968-36-2796-X},
   MRCLASS = {13D25 (13N05)},
  MRNUMBER = {1310371},
MRREVIEWER = {Jaime-Luis\ Garcia-Roig},
}

@article {BF85,
    AUTHOR = {Backelin, J\"orgen and Fr\"oberg, Ralf},
     TITLE = {Koszul algebras, {V}eronese subrings and rings with linear
              resolutions},
   JOURNAL = {Rev. Roumaine Math. Pures Appl.},
  FJOURNAL = {Acad\'emie de la R\'epublique Populaire Roumaine. Revue
              Roumaine de Math\'ematiques Pures et Appliqu\'ees},
    VOLUME = {30},
      YEAR = {1985},
    NUMBER = {2},
     PAGES = {85--97},
      ISSN = {0035-3965},
   MRCLASS = {16A03},
  MRNUMBER = {789425},
MRREVIEWER = {Freddy\ M. J. Van Oystaeyen},
}

@article {HRW99,
    AUTHOR = {Herzog, J. and Reiner, V. and Welker, V.},
     TITLE = {Componentwise linear ideals and {G}olod rings},
   JOURNAL = {Michigan Math. J.},
  FJOURNAL = {Michigan Mathematical Journal},
    VOLUME = {46},
      YEAR = {1999},
    NUMBER = {2},
     PAGES = {211--223},
      ISSN = {0026-2285,1945-2365},
   MRCLASS = {13F55 (13C14)},
  MRNUMBER = {1704158},
MRREVIEWER = {Ralf\ Fr\"oberg},
       DOI = {10.1307/mmj/1030132406},
       URL = {https://doi.org/10.1307/mmj/1030132406},
}

@article {GolodMassey,
    AUTHOR = {Golod, E. S.},
     TITLE = {Homologies of some local rings},
   JOURNAL = {Dokl. Akad. Nauk SSSR},
  FJOURNAL = {Doklady Akademii Nauk SSSR},
    VOLUME = {144},
      YEAR = {1962},
     PAGES = {479--482},
      ISSN = {0002-3264},
   MRCLASS = {13.95 (13.90)},
  MRNUMBER = {138667},
}

@article{DESTEFANI2016,
title = {Products of ideals may not be Golod},
journal = {Journal of Pure and Applied Algebra},
volume = {220},
number = {6},
pages = {2289-2306},
year = {2016},
issn = {0022-4049},
doi = {https://doi.org/10.1016/j.jpaa.2015.11.007},
url = {https://www.sciencedirect.com/science/article/pii/S0022404915003230},
author = {Alessandro {De Stefani}}
}

@article{CINR15,
author = {Conca, Aldo and Iyengar, Srikanth B. and  Nguyen, Hop Dang and Romer, Tim},
journal = {  Acta Math Vietnam},
pages = {353–374},
publisher = {},
title = {ABSOLUTELY KOSZUL ALGEBRAS AND
 THE BACKELIN-ROOS PROPERTY},
url = {https://doi.org/10.1007/s40306-015-0125-0},
volume = {40},
year = {2015},
}

@article{GUPTA2017,
title = {Ascent and descent of the Golod property along algebra retracts},
journal = {Journal of Algebra},
volume = {480},
pages = {124-143},
year = {2017},
issn = {0021-8693},
doi = {https://doi.org/10.1016/j.jalgebra.2017.02.009},
url = {https://www.sciencedirect.com/science/article/pii/S0021869317301084},
author = {Anjan Gupta}
}

@article {HHZ04Dirac,
    AUTHOR = {Herzog, J\"urgen and Hibi, Takayuki and Zheng, Xinxian},
     TITLE = {Dirac's theorem on chordal graphs and {A}lexander duality},
   JOURNAL = {European J. Combin.},
  FJOURNAL = {European Journal of Combinatorics},
    VOLUME = {25},
      YEAR = {2004},
    NUMBER = {7},
     PAGES = {949--960},
      ISSN = {0195-6698,1095-9971},
   MRCLASS = {05C38 (13F55 52B20)},
  MRNUMBER = {2083448},
MRREVIEWER = {Andrew\ Vince},
       DOI = {10.1016/j.ejc.2003.12.008},
       URL = {https://doi.org/10.1016/j.ejc.2003.12.008},
}

@article{BPS98,
  title={Monomial resolutions},
  author={Bayer, Dave and Peeva, Irena and Sturmfels, Bernd},
  year={1998},
  journal={ Math. Res. Lett. },
  volume = {5},
  pages = {no. 1--2, 31--46},
}

@article{OHH00,
author = {Ohsugi, Hidefumi and Herzog, Jürgen and Hibi, Takayuki},
year = {2000},
month = {09},
pages = {},
title = {Combinatorial pure subrings},
volume = {37},
journal = {Osaka Journal of Mathematics}
}

@article{HH13,
title = {Ordinary and symbolic powers are Golod},
journal = {Advances in Mathematics},
volume = {246},
pages = {89-99},
year = {2013},
issn = {0001-8708},
doi = {https://doi.org/10.1016/j.aim.2013.07.002},
url = {https://www.sciencedirect.com/science/article/pii/S0001870813002405},
author = {Jürgen Herzog and Craig Huneke}
}

@book{Toric,
  title={Toric Topology},
  author={Buchstaber, Victor M.  and Panov, Taras E. },
  year={2015},
  publisher={Mathematical Surveys and Monographs, 204, American Mathematical Society, Providence, RI}
}

@article {IK2018,
    AUTHOR = {Iriye, Kouyemon and Kishimoto, Daisuke},
     TITLE = {Golodness and polyhedral products for two-dimensional
              simplicial complexes},
   JOURNAL = {Forum Math.},
  FJOURNAL = {Forum Mathematicum},
    VOLUME = {30},
      YEAR = {2018},
    NUMBER = {2},
     PAGES = {527--532},
      ISSN = {0933-7741,1435-5337},
   MRCLASS = {55P15 (13F55 57Q15)},
  MRNUMBER = {3770008},
MRREVIEWER = {Anthony\ P.\ Bahri},
       DOI = {10.1515/forum-2017-0130},
       URL = {https://doi.org/10.1515/forum-2017-0130},
}

@article {Kat16,
    AUTHOR = {Katth\"an, Lukas},
     TITLE = {The {G}olod property for {S}tanley-{R}eisner rings in varying
              characteristic},
   JOURNAL = {J. Pure Appl. Algebra},
  FJOURNAL = {Journal of Pure and Applied Algebra},
    VOLUME = {220},
      YEAR = {2016},
    NUMBER = {6},
     PAGES = {2265--2276},
      ISSN = {0022-4049,1873-1376},
   MRCLASS = {13F55 (05E40)},
  MRNUMBER = {3448795},
MRREVIEWER = {Siamak\ Yassemi},
       DOI = {10.1016/j.jpaa.2015.11.005},
       URL = {https://doi.org/10.1016/j.jpaa.2015.11.005},
}

@Inbook{Amelotte2024,
author="Amelotte, Steven",
editor="Bahri, Anthony
and Jeffrey, Lisa
and Panov, Taras
and Stanley, Donald
and Theriault, Stephen",
title="Connected Sums of Sphere Products and Minimally Non-Golod Complexes",
bookTitle="Toric Topology and Polyhedral Products",
year="2024",
publisher="Springer Nature Switzerland",
address="Cham",
pages="1--12",
isbn="978-3-031-57204-3",
doi="10.1007/978-3-031-57204-3_1",
url="https://doi.org/10.1007/978-3-031-57204-3_1"
}

@article {Frank2018,
    AUTHOR = {Frankhuizen, Robin},
     TITLE = {{$A_\infty$}-resolutions and the {G}olod property for monomial
              rings},
   JOURNAL = {Algebr. Geom. Topol.},
  FJOURNAL = {Algebraic \& Geometric Topology},
    VOLUME = {18},
      YEAR = {2018},
    NUMBER = {6},
     PAGES = {3403--3424},
      ISSN = {1472-2747,1472-2739},
   MRCLASS = {13D07 (13D40 16E45 55S30)},
  MRNUMBER = {3868225},
MRREVIEWER = {Timothy\ B. P. Clark},
       DOI = {10.2140/agt.2018.18.3403},
       URL = {https://doi.org/10.2140/agt.2018.18.3403},
}

@book{Swan06,
  title={Polarization (of monomial ideals)},
  author={Irena Swanson},
  year={2006},
  publisher={\url{https://www.math.purdue.edu/~iswanso/polarization.pdf}}
}

@Inbook{Avramov1998,
author="Avramov, Luchezar L.",
editor="Elias, J.
and Giral, J. M.
and Mir{\'o}-Roig, R. M.
and Zarzuela, S.",
title="Infinite Free Resolutions",
bookTitle="Six Lectures on Commutative Algebra",
year="1998",
publisher="Birkh{\"a}user Basel",
address="Basel",
pages="1--118",
isbn="978-3-0346-0329-4",
doi="10.1007/978-3-0346-0329-4_1",
url="https://doi.org/10.1007/978-3-0346-0329-4_1"
}

@article {HM18,
    AUTHOR = {Herzog, J\"urgen and Maleki, Rasoul Ahangari},
     TITLE = {Koszul cycles and {G}olod rings},
   JOURNAL = {Manuscripta Math.},
  FJOURNAL = {Manuscripta Mathematica},
    VOLUME = {157},
      YEAR = {2018},
    NUMBER = {3-4},
     PAGES = {483--495},
      ISSN = {0025-2611,1432-1785},
   MRCLASS = {13A02 (13D02 13D40)},
  MRNUMBER = {3858414},
MRREVIEWER = {Raheleh\ Jafari},
       DOI = {10.1007/s00229-017-0997-5},
       URL = {https://doi.org/10.1007/s00229-017-0997-5},
}

@article {EN14,
    AUTHOR = {Epstein, Neil and Nguyen, Hop D.},
     TITLE = {Algebra retracts and {S}tanley-{R}eisner rings},
   JOURNAL = {J. Pure Appl. Algebra},
  FJOURNAL = {Journal of Pure and Applied Algebra},
    VOLUME = {218},
      YEAR = {2014},
    NUMBER = {9},
     PAGES = {1665--1682},
      ISSN = {0022-4049,1873-1376},
   MRCLASS = {13F55 (05E40 13A02 13D99 13H10)},
  MRNUMBER = {3188864},
MRREVIEWER = {Ralf\ Fr\"oberg},
       DOI = {10.1016/j.jpaa.2014.01.006},
       URL = {https://doi.org/10.1016/j.jpaa.2014.01.006},
}

@book {BCRV,
    AUTHOR = {Bruns, Winfried and Conca, Aldo and Raicu, Claudiu and
              Varbaro, Matteo},
     TITLE = {Determinants, {G}r\"obner bases and cohomology},
    SERIES = {Springer Monographs in Mathematics},
 PUBLISHER = {Springer, Cham},
      YEAR = {[2022] \copyright 2022},
     PAGES = {xiii+507},
      ISBN = {978-3-031-05479-2; 978-3-031-05480-8},
   MRCLASS = {13P10 (13C40)},
  MRNUMBER = {4627943},
       DOI = {10.1007/978-3-031-05480-8},
       URL = {https://doi.org/10.1007/978-3-031-05480-8},
}

@article {Woodroofe-vertex-decomposable,
    AUTHOR = {Woodroofe, Russ},
     TITLE = {Vertex decomposable graphs and obstructions to shellability},
   JOURNAL = {Proc. Amer. Math. Soc.},
  FJOURNAL = {Proceedings of the American Mathematical Society},
    VOLUME = {137},
      YEAR = {2009},
    NUMBER = {10},
     PAGES = {3235--3246},
      ISSN = {0002-9939,1088-6826},
   MRCLASS = {05E45 (05C38 13F55)},
  MRNUMBER = {2515394},
MRREVIEWER = {Siamak\ Yassemi},
       DOI = {10.1090/S0002-9939-09-09981-X},
       URL = {https://doi.org/10.1090/S0002-9939-09-09981-X},
}

@article {FVT-sequentially-CM,
    AUTHOR = {Francisco, Christopher A. and Van Tuyl, Adam},
     TITLE = {Sequentially {C}ohen-{M}acaulay edge ideals},
   JOURNAL = {Proc. Amer. Math. Soc.},
  FJOURNAL = {Proceedings of the American Mathematical Society},
    VOLUME = {135},
      YEAR = {2007},
    NUMBER = {8},
     PAGES = {2327--2337},
      ISSN = {0002-9939,1088-6826},
   MRCLASS = {13F55 (05E25)},
  MRNUMBER = {2302553},
MRREVIEWER = {Carles\ Bivi\`a-Ausina},
       DOI = {10.1090/S0002-9939-07-08841-7},
       URL = {https://doi.org/10.1090/S0002-9939-07-08841-7},
}

@incollection {GP81,
    AUTHOR = {Gruson, Laurent and Peskine, Christian},
     TITLE = {Courbes de l'espace projectif: vari\'et\'es de s\'ecantes},
 BOOKTITLE = {Enumerative geometry and classical algebraic geometry ({N}ice,
              1981)},
    SERIES = {Progr. Math.},
    VOLUME = {24},
     PAGES = {1--31},
 PUBLISHER = {Birkh\"auser, Boston, MA},
      YEAR = {1982},
      ISBN = {3-7643-3106-2},
   MRCLASS = {14M15 (14H45 14M05 32L25)},
  MRNUMBER = {685761},
MRREVIEWER = {Markus\ Brodmann},
}

@article {DK24,
    AUTHOR = {Deng, Jiahe and Kretschmer, Andreas},
     TITLE = {Ideals of submaximal minors of sparse symmetric matrices},
   JOURNAL = {J. Pure Appl. Algebra},
  FJOURNAL = {Journal of Pure and Applied Algebra},
    VOLUME = {228},
      YEAR = {2024},
    NUMBER = {6},
     PAGES = {Paper No. 107595, 16},
      ISSN = {0022-4049,1873-1376},
   MRCLASS = {13C40 (13D02 13P10 14N10)},
  MRNUMBER = {4695685},
MRREVIEWER = {Giuseppe\ Favacchio},
       DOI = {10.1016/j.jpaa.2023.107595},
       URL = {https://doi.org/10.1016/j.jpaa.2023.107595},
}

@article {BCMV25,
    AUTHOR = {Ada Boralevi and Enrico Carlini and Mateusz Micha{\l}ek and Emanuele Ventura},
     TITLE = {On the codimension of permanental varieties},
   JOURNAL = {Adv. Math.},
  FJOURNAL = {Advances in Mathematics},
    VOLUME = {461},
      YEAR = {2025},
     PAGES = {Paper No. 110079, 28},
      ISSN = {0001-8708,1090-2082},
   MRCLASS = {14M12 (05E14 05E40 15A15)},
       DOI = {10.1016/j.aim.2024.110079},
       URL = {https://doi.org/10.1016/j.aim.2024.110079},
}

@article{LS00,
  title={Permanental ideals},
  author={Laubenbacher, Reinhard C. and Swanson, Irena},
  year={2000},
  journal={J. Symbolic Comput.   },
  volume={30},
  pages={195--205}
}

@article {HVT08,
    AUTHOR = {H\`a, Huy T\`ai and Van Tuyl, Adam},
     TITLE = {Monomial ideals, edge ideals of hypergraphs, and their graded
              {B}etti numbers},
   JOURNAL = {J. Algebraic Combin.},
  FJOURNAL = {Journal of Algebraic Combinatorics. An International Journal},
    VOLUME = {27},
      YEAR = {2008},
    NUMBER = {2},
     PAGES = {215--245},
      ISSN = {0925-9899,1572-9192},
   MRCLASS = {05C65 (05C25 13D02)},
  MRNUMBER = {2375493},
MRREVIEWER = {Sara\ Faridi},
       DOI = {10.1007/s10801-007-0079-y},
       URL = {https://doi.org/10.1007/s10801-007-0079-y},
}

@article {EHH11,
    AUTHOR = {Ene, Viviana and Herzog, J\"urgen and Hibi, Takayuki},
     TITLE = {Cohen-{M}acaulay binomial edge ideals},
   JOURNAL = {Nagoya Math. J.},
  FJOURNAL = {Nagoya Mathematical Journal},
    VOLUME = {204},
      YEAR = {2011},
     PAGES = {57--68},
      ISSN = {0027-7630,2152-6842},
   MRCLASS = {13F20 (05C25 05E40 13C14 13D02)},
  MRNUMBER = {2863365},
MRREVIEWER = {Adam\ L.\ Van Tuyl},
       DOI = {10.1215/00277630-1431831},
       URL = {https://doi.org/10.1215/00277630-1431831},
}

@article {Kat17,
    AUTHOR = {Katth\"an, Lukas},
     TITLE = {A non-{G}olod ring with a trivial product on its {K}oszul
              homology},
   JOURNAL = {J. Algebra},
  FJOURNAL = {Journal of Algebra},
    VOLUME = {479},
      YEAR = {2017},
     PAGES = {244--262},
      ISSN = {0021-8693,1090-266X},
   MRCLASS = {13D02 (05E40 13F55)},
  MRNUMBER = {3627285},
MRREVIEWER = {Timothy\ B. P. Clark},
       DOI = {10.1016/j.jalgebra.2017.01.042},
       URL = {https://doi.org/10.1016/j.jalgebra.2017.01.042},
}

@article {Betti-cover-ideals,
    AUTHOR = {H\`a, T\`ai Huy and Hibi, Takayuki},
     TITLE = {Resolution and {B}etti numbers of vertex cover ideals},
   JOURNAL = {Math. Scand.},
  FJOURNAL = {Mathematica Scandinavica},
    VOLUME = {131},
      YEAR = {2025},
    NUMBER = {3},
     PAGES = {451--463},
      ISSN = {0025-5521,1903-1807},
   MRCLASS = {13A70},
  MRNUMBER = {4982887},
}

@article {HS15-cover-ideals,
    AUTHOR = {H\`a, Huy T\`ai and Sun, Mengyao},
     TITLE = {Squarefree monomial ideals that fail the persistence property
              and non-increasing depth},
   JOURNAL = {Acta Math. Vietnam.},
  FJOURNAL = {Acta Mathematica Vietnamica},
    VOLUME = {40},
      YEAR = {2015},
    NUMBER = {1},
     PAGES = {125--137},
      ISSN = {0251-4184,2315-4144},
   MRCLASS = {13F20 (05C15 05C25 13C15)},
  MRNUMBER = {3331937},
MRREVIEWER = {Monica\ La Barbiera},
       DOI = {10.1007/s40306-014-0104-x},
       URL = {https://doi.org/10.1007/s40306-014-0104-x},
}

@article {cover-ideals-counter,
    AUTHOR = {Kaiser, Tom\'a\u{s} and Stehl\'ik, Mat\v{e}j and \v{S}krekovski,
              Riste},
     TITLE = {Replication in critical graphs and the persistence of monomial
              ideals},
   JOURNAL = {J. Combin. Theory Ser. A},
  FJOURNAL = {Journal of Combinatorial Theory. Series A},
    VOLUME = {123},
      YEAR = {2014},
     PAGES = {239--251},
      ISSN = {0097-3165,1096-0899},
   MRCLASS = {05C15 (05C10 13A99)},
  MRNUMBER = {3157809},
MRREVIEWER = {John\ J.\ Watkins},
       DOI = {10.1016/j.jcta.2013.12.005},
       URL = {https://doi.org/10.1016/j.jcta.2013.12.005},
}

@article {CPSTY,
    AUTHOR = {Constantinescu, A. and Pournaki, M. R. and Seyed Fakhari, S.
              A. and Terai, N. and Yassemi, S.},
     TITLE = {Cohen-{M}acaulayness and limit behavior of depth for powers of
              cover ideals},
   JOURNAL = {Comm. Algebra},
  FJOURNAL = {Communications in Algebra},
    VOLUME = {43},
      YEAR = {2015},
    NUMBER = {1},
     PAGES = {143--157},
      ISSN = {0092-7872,1532-4125},
   MRCLASS = {13H10 (13C15 13F20)},
  MRNUMBER = {3240410},
MRREVIEWER = {Marco\ Fontana},
       DOI = {10.1080/00927872.2014.897550},
       URL = {https://doi.org/10.1080/00927872.2014.897550},
}

@article {ER98,
    AUTHOR = {Eagon, John A. and Reiner, Victor},
     TITLE = {Resolutions of {S}tanley-{R}eisner rings and {A}lexander
              duality},
   JOURNAL = {J. Pure Appl. Algebra},
  FJOURNAL = {Journal of Pure and Applied Algebra},
    VOLUME = {130},
      YEAR = {1998},
    NUMBER = {3},
     PAGES = {265--275},
      ISSN = {0022-4049,1873-1376},
   MRCLASS = {13D25 (13D02 13H10)},
  MRNUMBER = {1633767},
MRREVIEWER = {Ralf\ Fr\"oberg},
       DOI = {10.1016/S0022-4049(97)00097-2},
       URL = {https://doi.org/10.1016/S0022-4049(97)00097-2},
}

@article {HvT22,
    AUTHOR = {H\`a, Huy T\`ai and Van Tuyl, Adam},
     TITLE = {Powers of componentwise linear ideals: the
              {H}erzog-{H}ibi-{O}hsugi conjecture and related problems},
   JOURNAL = {Res. Math. Sci.},
  FJOURNAL = {Research in the Mathematical Sciences},
    VOLUME = {9},
      YEAR = {2022},
    NUMBER = {2},
     PAGES = {Paper No. 22, 26},
      ISSN = {2522-0144,2197-9847},
   MRCLASS = {13A15 (05E40 13D02 13F20)},
  MRNUMBER = {4404868},
MRREVIEWER = {Houyi\ Yu},
       DOI = {10.1007/s40687-022-00316-4},
       URL = {https://doi.org/10.1007/s40687-022-00316-4},
}

@article {FHvT11,
    AUTHOR = {Francisco, Christopher A. and H\`a, Huy T\`ai and Van Tuyl,
              Adam},
     TITLE = {Colorings of hypergraphs, perfect graphs, and associated
              primes of powers of monomial ideals},
   JOURNAL = {J. Algebra},
  FJOURNAL = {Journal of Algebra},
    VOLUME = {331},
      YEAR = {2011},
     PAGES = {224--242},
      ISSN = {0021-8693,1090-266X},
   MRCLASS = {13F55 (05C17 05C65 13F20)},
  MRNUMBER = {2774655},
MRREVIEWER = {Amir\ Mafi},
       DOI = {10.1016/j.jalgebra.2010.10.025},
       URL = {https://doi.org/10.1016/j.jalgebra.2010.10.025},
}

@article {Scheja,
    AUTHOR = {Scheja, G\"unter},
     TITLE = {\"Uber die {B}ettizahlen lokaler {R}inge},
   JOURNAL = {Math. Ann.},
  FJOURNAL = {Mathematische Annalen},
    VOLUME = {155},
      YEAR = {1964},
     PAGES = {155--172},
      ISSN = {0025-5831,1432-1807},
   MRCLASS = {13.95 (13.90)},
  MRNUMBER = {162819},
MRREVIEWER = {M.\ Nagata},
       DOI = {10.1007/BF01344078},
       URL = {https://doi.org/10.1007/BF01344078},
}

@article {Ra13,
    AUTHOR = {Rauh, Johannes},
     TITLE = {Generalized binomial edge ideals},
   JOURNAL = {Adv. in Appl. Math.},
  FJOURNAL = {Advances in Applied Mathematics},
    VOLUME = {50},
      YEAR = {2013},
    NUMBER = {3},
     PAGES = {409--414},
      ISSN = {0196-8858,1090-2074},
   MRCLASS = {13F20 (05C25 13P10)},
  MRNUMBER = {3011436},
MRREVIEWER = {Siamak\ Yassemi},
       DOI = {10.1016/j.aam.2012.08.009},
       URL = {https://doi.org/10.1016/j.aam.2012.08.009},
}

@article {Omkar,
    AUTHOR = {Javadekar, Omkar},
     TITLE = {On {G}olod subdeterminantal ideals},
   JOURNAL = {arXiv:2601.18153 [math.AC]}
}
\end{document}